	\newtheoremstyle{slanted}
	{}
	{}
	{\slshape}
	{}
	{\bfseries}
	{.}
	{ }
	{}
	\theoremstyle{slanted}
	\newtheorem{theo}{Theorem}[section]
	\newtheorem{prop}[theo]{Proposition}
	\newtheorem{lemma}[theo]{Lemma}
	\newtheorem{corollary}[theo]{Corollary}
	\newtheorem{remark}[theo]{Remark}
	\def\egdef{:=}
	\newcommand{\tend}[3][]{\xrightarrow[#2\to#3]{#1}}
	\newcommand{\ZZ}{\mathbb{Z}}
	\newcommand{\RR}{\mathbb{R}}
	\renewcommand{\O}{\mathcal{O}}
	\newcommand{\brg}{\llbracket}
	\newcommand{\brd}{\rrbracket}
	\newcommand{\MAX}{\mbox{MAX}}
	\newcommand{\SUCC}{\mbox{succ}}
\title{Dynamics of $\lambda$-continued fractions and $\beta$-shifts}
\author{Élise Janvresse, Benoît Rittaud  and Thierry de la Rue}
\address{\'Elise Janvresse, Thierry de la Rue:
Laboratoire de Math\'ematiques Rapha\"el Salem, 
Universit\'e de Rouen, CNRS -- 
Avenue de l'Universit\'e -- 
F76801 Saint \'Etienne du Rouvray.}
\email{Elise.Janvresse@univ-rouen.fr\\Thierry.de-la-Rue@univ-rouen.fr}
\address{Beno\^it Rittaud: Laboratoire Analyse, G\'eom\'etrie et Applications, Universit\'e Paris~13 Institut Galil\'ee, CNRS -- 
99 avenue Jean-Baptiste Cl\'ement -- 
F93430 Villetaneuse.}
\email{rittaud@math.univ-paris13.fr}
\begin{document}
\bibliographystyle{amsplain}

\begin{abstract}
For a real number $0<\lambda<2$, we introduce a transformation $T_\lambda$ naturally associated to expansion in $\lambda$-continued fraction, for which we also give a geometrical interpretation. 
The symbolic coding of the orbits of $T_\lambda$ provides an algorithm to expand any positive real number in $\lambda$-continued fraction. We prove the conjugacy between $T_\lambda$ and some $\beta$-shift, $\beta>1$. Some properties of the map $\lambda\mapsto\beta(\lambda)$ are established: It is increasing and continuous from $]0, 2[$ onto $]1,\infty[$ but non-analytic. 
\end{abstract}

\keywords{continued fractions, $\beta$-expansion}
\subjclass[2000]{11K16, 11J70, 37A45, 37B10}
\maketitle

\section{Introduction}

In all the paper, $\lambda$ denotes a real number, $0<\lambda<2$. A \emph{$\lambda$-continued fraction} is an expression of the form
$$ [a_0,\ldots, a_n,\ldots]_\lambda\egdef a_0\lambda + \cfrac{1}{a_1\lambda + \cfrac{1}{\ddots+\cfrac{1}{a_n \lambda +_{\ddots}}}} $$
where $(a_n)_{n\ge0}$ is a finite or infinite sequence, with $a_n\in\ZZ\setminus\{0\}$ for $n\ge1$.
This kind of continued fractions has been studied by Rosen in~\cite{rosen1954}, where specific properties are enlightened when $\lambda=\lambda_k \egdef 2\cos (\pi/k)$ for some integer $k\ge 3$.
For $0< \lambda <2$, any real number can be expanded in $\lambda$-continued fraction, even if the expansion is not unique in general. In this paper we study a transformation associated to a particular expansion in $\lambda$-continued fractions, in which we always have $a_0=0$, $a_1>0$ and the signs of the $a_n$'s alternate.

The motivations for the present article stem from several works by the same authors~\cite{janvresse2008,janvresse2010}, where the exponential growth of random Fibonacci sequences with parameter $\lambda$ is studied. 
In~\cite{janvresse2010}, the case $\lambda=\lambda_k$ for some integer $k\ge 3$ is solved and involves a probability distribution on $\RR_+$ invariant under some dynamics. 
This measure is defined inductively on generalized Stern-Brocot intervals, whose endpoints are described in terms of finite expansion in $\lambda$-continued fraction.
A key fact proved in~\cite{janvresse2010} is that the sequence of partitions of $\RR_+$ given by generalized Stern-Brocot intervals for $\lambda=\lambda_k$ is isomorphic to the sequence of partitions of $[0,1]$ associated to the expansion of real numbers in base $(k-1)$. 
In this paper we investigate the link between $\lambda$-continued fractions and expansions in non-integer basis, generalizing the correspondence observed in the case $\lambda=\lambda_k$. 

\subsection*{Roadmap}
We introduce in Section~\ref{Sec:transformation} a transformation $T_\lambda$ naturally associated to expansion in $\lambda$-continued fraction, for which we also give a nice geometrical interpretation. The study of this transformation leads to a symbolic coding of the orbits. In Section~\ref{Sec:distinct codings}, we prove that distinct points have different codings, which provides an algorithm to expand any positive real number in $\lambda$-continued fraction (see Section~\ref{Sec:continued fractions}). 
In Section~\ref{Sec:conjugacy} we give a characterization of symbolic coding of orbits for $T_\lambda$ (Theorem~\ref{Thm:characterization}), which enables to prove the conjugacy between $T_\lambda$ and some $\beta$-shift, $\beta>1$ (Theorem~\ref{Thm:correspondence}). In Section~\ref{Sec:map}, we present some properties of the map $\lambda\mapsto\beta(\lambda)$: It is increasing and continuous from $]0, 2[$ onto $]1,\infty[$ but non-analytic (Theorem~\ref{Thm:beta(lambda)} and Corollary~\ref{Cor:non-analytic}). 
We end up by raising some open questions on the subject.

\section{Description of the dynamics}
\label{Sec:transformation}
\subsection{The transformation $T_\lambda$}

We start by defining the homographic functions $h$ and $h_0$ by setting
$$ h(y) \egdef \dfrac{1}{\lambda-y}\ ;\ h_0(y)\egdef \dfrac{y}{\lambda y+1}. $$ 
Observe that, when $y$ ranges over $[0,\infty[$, $h_0(y)$ increases from $m_0^\lambda\egdef 0$ to $m_1^\lambda\egdef 1/\lambda=h(m_0^\lambda)$. 
We recursively define the sequence $(m_i^\lambda)$ by setting, while $m_i^\lambda<\lambda$, $m_{i+1}^\lambda\egdef h(m_i^\lambda)$. 

\begin{lemma}
\label{Lemma:definition des m_i}
The sequence $(m_i^\lambda)$ is increasing, and there exists $i_\lambda\ge1$ such that $m_{i_\lambda}^\lambda\ge\lambda$. 
\end{lemma}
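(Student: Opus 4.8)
The plan is to study the one-dimensional map $h$ directly, reducing both assertions to elementary inequalities. First I would record that $h$ is strictly increasing on $(-\infty,\lambda)$, since $h'(y)=1/(\lambda-y)^2>0$ there, and that $h(y)>0$ whenever $0\le y<\lambda$, so the recursion stays in $]0,\infty[$ and is well defined as long as $m_i^\lambda<\lambda$. The monotonicity of the sequence then amounts to the single inequality $h(y)>y$ for $y<\lambda$. Multiplying by $\lambda-y>0$, this is equivalent to $y^2-\lambda y+1>0$; since the discriminant of the quadratic $y^2-\lambda y+1$ equals $\lambda^2-4<0$, the quadratic is positive for every real $y$. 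Hence $m_{i+1}^\lambda=h(m_i^\lambda)>m_i^\lambda$ at every step for which $m_i^\lambda<\lambda$, which proves that $(m_i^\lambda)$ is increasing. The key point to emphasize is that this is exactly where the standing hypothesis $\lambda<2$ enters decisively: for $\lambda\ge2$ the map $h$ would acquire a fixed point in $[0,\lambda)$ and the conclusion would fail.

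For the existence of $i_\lambda$, I would show that the increments stay bounded away from $0$, so that the orbit cannot accumulate below $\lambda$. Writing $h(y)-y=(y^2-\lambda y+1)/(\lambda-y)$ and using $y^2-\lambda y+1\ge 1-\lambda^2/4$ (the minimum of the numerator, attained at $y=\lambda/2$) together with $0<\lambda-y\le\lambda$ for $0\le y<\lambda$, I obtain $h(y)-y\ge c_\lambda$, where I set $c_\lambda\egdef(4-\lambda^2)/(4\lambda)>0$. Thus, as long as $m_i^\lambda<\lambda$, one has $m_{i+1}^\lambda-m_i^\lambda\ge c_\lambda$, so starting from $m_0^\lambda=0$ the sequence grows by a fixed positive amount at each step and must leave $[0,\lambda)$ after finitely many steps, which yields the required index $i_\lambda\ge1$ (indeed $i_\lambda\le\lceil\lambda/c_\lambda\rceil$).

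An alternative for this second part, if an explicit bound is not desired, is a contradiction argument using the monotonicity already established: were $m_i^\lambda<\lambda$ for all $i$, the increasing bounded sequence would converge to some $\ell\le\lambda$. The case $\ell<\lambda$ is excluded by passing to the limit in $m_{i+1}^\lambda=h(m_i^\lambda)$, which would force the fixed-point relation $\ell=h(\ell)$ and contradict $h(\ell)>\ell$; the case $\ell=\lambda$ is impossible because $h(m_i^\lambda)\to+\infty$ as $m_i^\lambda\to\lambda^-$, whereas $m_{i+1}^\lambda\to\ell<\infty$. I do not expect a genuine obstacle here; the only point requiring care is the correct exploitation of the quadratic inequality $y^2-\lambda y+1>0$, which simultaneously governs the monotonicity and the uniform lower bound on the increments.
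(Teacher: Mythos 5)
Your proof is correct, and for the finiteness claim it takes a genuinely different route from the paper. The paper proves monotonicity by induction from $m_0^\lambda<m_1^\lambda$ using that $h$ is increasing on $[0,\lambda[$, and then rules out an infinite sequence by a soft compactness argument: a bounded increasing sequence would converge to a fixed point of $h$, and no such fixed point exists because $\lambda<2$. You instead extract the single inequality $h(y)>y$ on $[0,\lambda[$ from the positivity of the quadratic $y^2-\lambda y+1$ (negative discriminant $\lambda^2-4$), which gives monotonicity in one stroke, and then you make the argument quantitative: the uniform increment bound $h(y)-y\ge(4-\lambda^2)/(4\lambda)$ forces the orbit out of $[0,\lambda[$ in at most $\lceil 4\lambda^2/(4-\lambda^2)\rceil$ steps. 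Your computations check out ($h(y)-y=(y^2-\lambda y+1)/(\lambda-y)$, numerator minimized at $y=\lambda/2$ with value $1-\lambda^2/4>0$, denominator at most $\lambda$). What your version buys is an explicit, if crude, upper bound on $i_\lambda$ and a proof that isolates exactly where $\lambda<2$ is used; what the paper's version buys is brevity, and it forgoes any quantitative control since the exact value of $i_\lambda$ is determined later anyway by the geometric interpretation (Proposition~\ref{Prop:i_lambda}). Your alternative contradiction argument is essentially the paper's, with the welcome extra care of separating the case where the putative limit equals $\lambda$ (where $h$ blows up) from the case where it is strictly below $\lambda$ (where the fixed-point relation fails).
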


\begin{proof}
 Since $h$ is increasing on $[0,\lambda[$ and $m_0^\lambda<m_1^\lambda$, an easy induction shows that $(m_i^\lambda)$ is increasing. If the sequence $(m_i^\lambda)$ were infinite, it would be bounded above by $\lambda$, therefore it would converge to a fixed point for $h$. But, as $\lambda<2$, such a fixed point does not exist.
\end{proof}
The sequence $(m_i^\lambda)$ is thus finite, with $(i_\lambda+1)$ terms satisfying 
$$ 0=m_0^\lambda<m_1^\lambda<\cdots<m_{i_\lambda-1}^\lambda<\lambda\le m_{i_\lambda}^\lambda <\infty. $$

We now recursively define the homographic functions $(h_i)_{0\le i \le i_\lambda}$ by 
$$h_{i+1}(y) \egdef h\circ h_i(y). $$
Note that, for $i<i_\lambda$, $I_i^\lambda\egdef h_i([0,\infty[)=[m_i^\lambda,m_{i+1}^\lambda[$, and that the function $h_i$ has no pole on $[0,\infty[$. 
If $m_{i_\lambda}^\lambda>\lambda$, the last function $h_{i_\lambda}$ has a pole $\ell_\lambda\egdef h_{i_\lambda-1}^{-1}(\lambda)$ and $h_{i_\lambda}([0,\ell_{\lambda}[)=[m_{i_\lambda}^\lambda,\infty[$. If $m_{i_\lambda}^\lambda=\lambda$, $h_{i_\lambda}$ maps $[0,\infty[$ onto $[m_{i_\lambda}^\lambda,\infty[$ (and is thus a degenerate homographic function: $h_{i_\lambda}$ is affine in this case). 
We will consider $h_{i_\lambda}$ to be defined only on $[0,\ell_{\lambda}[$, where $\ell_{\lambda}=\infty$ in the second case, so that $I_{i_\lambda}^\lambda\egdef h_{i_\lambda}([0,\ell_{\lambda}[)=[m_{i_\lambda}^\lambda,\infty[$.

{From} the above, it follows that $\{I_i^\lambda,\ 0\le i\le i_\lambda\}$ is a partition of $[0,\infty[$. 
We then define the transformation $T_\lambda:\ [0,\infty[\longrightarrow [0,\infty[$ by setting 
$$ \forall x\in I_i^\lambda,\ T_\lambda(x)\egdef h_i^{-1}(x). $$
(See Figure~\ref{Fig:h_i}.)

\begin{figure}
\input{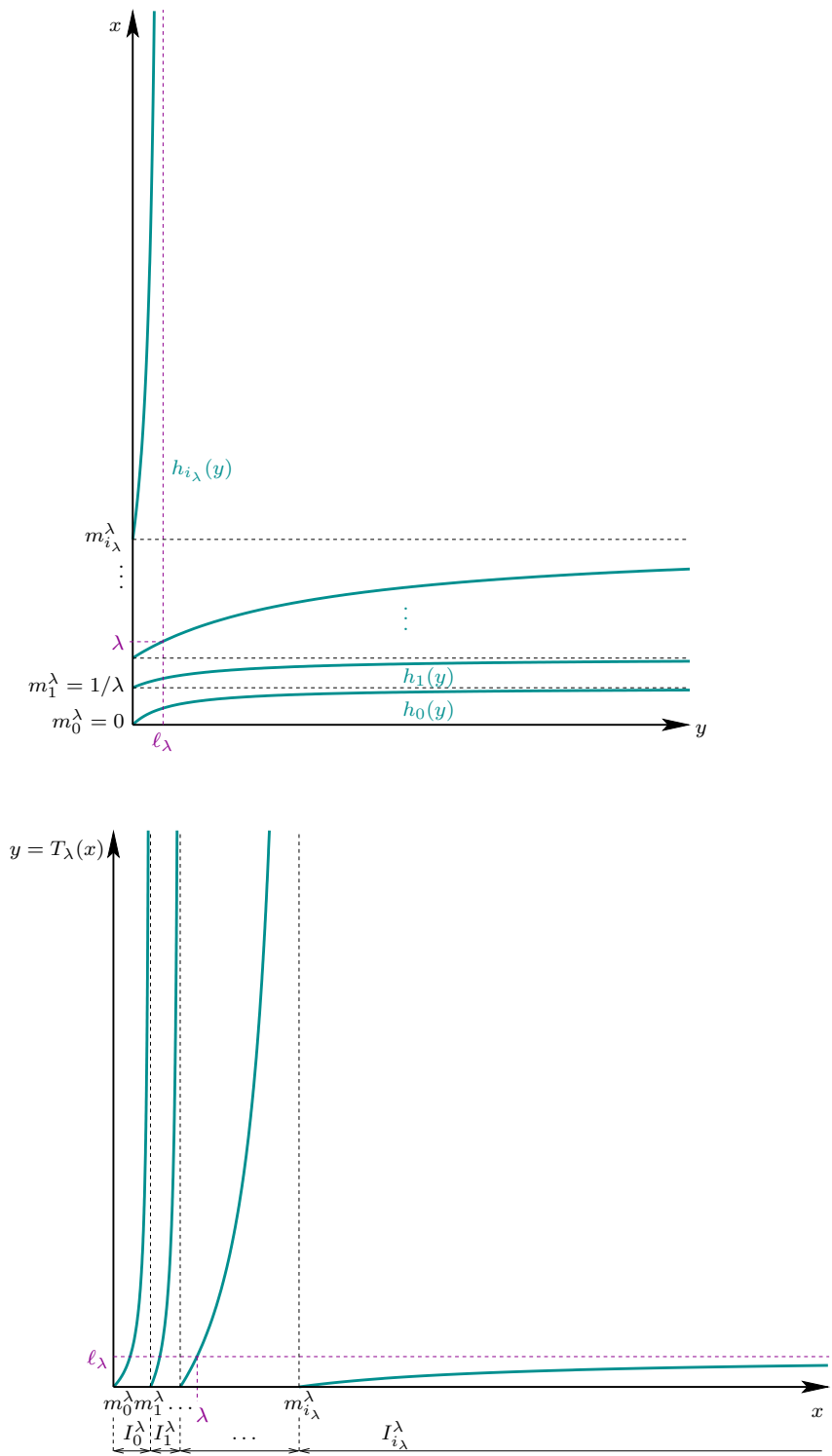}
\caption{Top: Graphs of the homographic functions $h_i$.
Bottom: Graph of the transformation $T_\lambda$. (Here $\lambda=1.5$)} 
\label{Fig:h_i}
\end{figure}

\subsection{Coding of the orbits}

The way $T_\lambda$ is defined naturally leads to a symbolic coding of the orbits under the action of $T_\lambda$, on the alphabet $\{0, \ldots, i_\lambda\}$: For $x\in [0,\infty[$, we set 
$$ \omega_\lambda(x) \egdef x_0 x_1 x_2 \ldots, $$
where $x_\ell$ is the unique element of $\{0, \ldots, i_\lambda\}$ such that $T_\lambda^\ell(x)\in I_{x_\ell}^\lambda$. 

It will be useful in the sequel to consider the lexicographic order on $\{0, \ldots, i_\lambda\}^{\ZZ_+}$: 
If $\omega=(\omega_j)$ and $\omega'=(\omega_j')$ belong to $\{0, \ldots, i_\lambda\}^{\ZZ_+}$, we shall note $\omega\prec\omega'$ or $\omega'\succ\omega$ whenever there exists $\ell\ge0$ such that $\omega_\ell<\omega'_\ell$ and $\omega_j=\omega'_j$ for each $0\le j<\ell$. By $\omega\preceq\omega'$ (or $\omega'\succeq\omega$), we mean $\omega\prec\omega'$ or $\omega=\omega'$.

\begin{lemma}\label{lemma:omega croissant}
 If $0\le x<x'<\infty$, $\omega_\lambda(x) \preceq\omega_\lambda(x')$.
\end{lemma}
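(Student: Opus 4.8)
The plan is to reduce everything to two monotonicity facts and then run a straightforward induction on the length of the common prefix of the two codings.

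First I would record two facts that follow directly from the construction. \emph{Fact (a):} on each cell $I_i^\lambda$ the map $T_\lambda$ coincides with $h_i^{-1}$, and since $h$ and $h_0$ are increasing (as observed when they are introduced), every $h_i=h\circ h_{i-1}$ is increasing on its domain; hence $h_i^{-1}$, and therefore the restriction of $T_\lambda$ to $I_i^\lambda$, is \emph{strictly} increasing. \emph{Fact (b):} because $I_i^\lambda=[m_i^\lambda,m_{i+1}^\lambda[$ with $0=m_0^\lambda<m_1^\lambda<\cdots$, the cells are arranged from left to right in increasing order of their index; consequently, if $a<b$ with $a\in I_p^\lambda$ and $b\in I_q^\lambda$, then $p\le q$ (for if $p>q$ one would have $a\ge m_p^\lambda\ge m_{q+1}^\lambda>b$, a contradiction).

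The core of the argument is the following claim, proved by induction on $\ell\ge0$: \emph{if the first $\ell$ symbols of $\omega_\lambda(x)$ and $\omega_\lambda(x')$ coincide, then $T_\lambda^\ell(x)<T_\lambda^\ell(x')$}. For $\ell=0$ this is just the hypothesis $x<x'$. For the inductive step, assume the first $\ell+1$ symbols coincide; in particular the first $\ell$ do, so the induction hypothesis gives $T_\lambda^\ell(x)<T_\lambda^\ell(x')$, while the equality of the $(\ell)$-th symbols means that both $T_\lambda^\ell(x)$ and $T_\lambda^\ell(x')$ lie in one and the same cell $I_i^\lambda$, where $i$ denotes their common value. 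Applying Fact (a) on that cell yields $T_\lambda^{\ell+1}(x)=h_i^{-1}\bigl(T_\lambda^\ell(x)\bigr)<h_i^{-1}\bigl(T_\lambda^\ell(x')\bigr)=T_\lambda^{\ell+1}(x')$, completing the induction.

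With the claim in hand I would finish as follows. If $\omega_\lambda(x)=\omega_\lambda(x')$ there is nothing to prove. Otherwise let $\ell$ be the first index at which the two codings differ. The first $\ell$ symbols coincide, so the claim gives $T_\lambda^\ell(x)<T_\lambda^\ell(x')$; since by definition $T_\lambda^\ell(x)\in I_{x_\ell}^\lambda$ and $T_\lambda^\ell(x')\in I_{x'_\ell}^\lambda$, Fact (b) forces $x_\ell\le x'_\ell$, and as $x_\ell\ne x'_\ell$ we get $x_\ell<x'_\ell$, which is exactly $\omega_\lambda(x)\prec\omega_\lambda(x')$. Thus $\omega_\lambda(x)\preceq\omega_\lambda(x')$ in all cases. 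I do not expect a genuine obstacle here; the only points requiring care are to keep the inequalities strict throughout (this is why it matters that each $h_i^{-1}$ is strictly, not merely weakly, increasing) and to respect the half-open nature of the cells when invoking Fact (b), including the possible pole of $h_{i_\lambda}$, which is harmless since $T_\lambda$ is defined on all of $[0,\infty[$ and every iterate $T_\lambda^\ell(x)$ remains in its domain.
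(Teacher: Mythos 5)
Your proof is correct and follows essentially the same route as the paper: the paper likewise lets $\ell$ be the first index of disagreement, proves $T_\lambda^j(x)<T_\lambda^j(x')$ for $j\le\ell$ by induction using that the $h_i^{-1}$ are increasing, and concludes $x_\ell<x'_\ell$. You merely make explicit the two ingredients (strict monotonicity on each cell and the left-to-right ordering of the cells) that the paper leaves implicit.
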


\begin{proof}
If $\omega_\lambda(x) \neq \omega_\lambda(x')$, let $\ell$ be the smallest integer such that $x_\ell\neq x'_\ell$. Since the functions $h_i^{-1}$ are increasing, it is easily proved by induction on $j$ that for $0\le j\le \ell$, $T_\lambda^j (x) < T_\lambda^j (x')$. Hence $T_\lambda^\ell (x) < T_\lambda^\ell (x')$ and $x_\ell<x'_\ell$.
\end{proof}

In Section~\ref{Sec:distinct codings}, we prove that in fact $\omega_\lambda(x) \neq\omega_\lambda(x')$ if $x\neq x'$.

\medskip

An object of particular interest in the study of $T_\lambda$ will be the limit, as $x\to\infty$, of $\omega_\lambda(x)$ (which exists by Lemma~\ref{lemma:omega croissant}). We shall denote this limit
$$ \omega_\lambda(\infty) = \infty_0 \infty_1\infty_2\ldots \egdef \lim_{x\to\infty}\uparrow\omega_\lambda(x). $$

For $n\ge0$ and $a_0,\ldots,a_n\in\{0, \ldots, i_\lambda\}$, we define
$$ I_{a_0\ldots a_n}^\lambda \egdef \{x:\ x_j=a_j,\ 0\le j\le n \}. $$

It is easily checked by induction that $I_{a_0\ldots a_n}^\lambda$ is either empty or an interval of the form $h_{a_0}\circ\cdots\circ h_{a_n}([0,r_{a_0\ldots a_n}[)$ where $0<r_{a_0\ldots a_n}\le\infty$.

\medskip

We denote by $\sigma$ the shift on $\{0, \ldots, i_\lambda\}^{\ZZ_+}$, so that $\omega_\lambda(T_\lambda (x)) = \sigma \omega_\lambda(x)$. 

\begin{lemma}
\label{lemma:lexico}
 For each $0\le x\le\infty$, and each $\ell\ge0$, $\sigma^\ell \omega_\lambda(x)\preceq\omega_\lambda(\infty)$.
\end{lemma}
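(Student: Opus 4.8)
The plan is to reduce everything to the conjugacy identity
$\sigma^\ell\omega_\lambda(x)=\omega_\lambda(T_\lambda^\ell(x))$, obtained by iterating the relation $\omega_\lambda(T_\lambda(x))=\sigma\omega_\lambda(x)$ recalled just above. For a \emph{finite} $x$, the point $y\egdef T_\lambda^\ell(x)$ again lies in $[0,\infty[$, so the shifted orbit of $x$ is covered as soon as one knows the single inequality $\omega_\lambda(y)\preceq\omega_\lambda(\infty)$ for every finite $y$. Thus the first task is to show that the coding at infinity dominates every coding.

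To do that I would first record the topological meaning of $\omega_\lambda(\infty)$. Because $x\mapsto\omega_\lambda(x)$ is nondecreasing for $\prec$ (Lemma~\ref{lemma:omega croissant}) and the alphabet $\{0,\ldots,i_\lambda\}$ is finite, each coordinate of $\omega_\lambda(x)$ is eventually constant as $x\to\infty$; hence the increasing limit $\omega_\lambda(\infty)$ is simultaneously the coordinatewise limit and the least upper bound for $\prec$. Then for any finite $y$ and any $x>y$ one has $\omega_\lambda(y)\preceq\omega_\lambda(x)$, and since the order interval $\{w:\ w\succeq\omega_\lambda(y)\}$ is closed for the product topology (its complement $\{w:\ w\prec\omega_\lambda(y)\}$ is a union of cylinders, hence open), passing to the coordinatewise limit gives $\omega_\lambda(y)\preceq\omega_\lambda(\infty)$. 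This settles the lemma for all finite $x$.

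The remaining, genuinely delicate, case is $x=\infty$, because $T_\lambda(\infty)$ is not defined and the identity $\sigma^\ell\omega_\lambda(x)=\omega_\lambda(T_\lambda^\ell(x))$ is not directly available there. Here I would argue by approximation. By continuity of the shift, coordinatewise convergence $\omega_\lambda(x)\to\omega_\lambda(\infty)$ yields $\sigma^\ell\omega_\lambda(x)\to\sigma^\ell\omega_\lambda(\infty)$ coordinatewise as $x\to\infty$. Each term $\sigma^\ell\omega_\lambda(x)$ for finite $x$ satisfies $\sigma^\ell\omega_\lambda(x)\preceq\omega_\lambda(\infty)$ by the finite case already proved. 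Since $\{w:\ w\preceq\omega_\lambda(\infty)\}$ is closed for the product topology — its complement $\{w:\ w\succ\omega_\lambda(\infty)\}$ is open, because any $w$ strictly above $\omega_\lambda(\infty)$ together with every sequence agreeing with it up to the first discrepancy stays strictly above — the coordinatewise limit $\sigma^\ell\omega_\lambda(\infty)$ again lies in this set, i.e. $\sigma^\ell\omega_\lambda(\infty)\preceq\omega_\lambda(\infty)$.

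I expect the main obstacle to be exactly the handoff at $x=\infty$: one must not treat $\sigma$ as order-preserving (it is not), and instead lean on the two facts that order intervals for $\prec$ are closed for the product topology and that $\omega_\lambda(\infty)$ is the coordinatewise limit of the $\omega_\lambda(x)$. Once these are isolated, the closedness argument can be made completely concrete by contradiction: assume a first coordinate $k$ where $\sigma^\ell\omega_\lambda(\infty)$ exceeds $\omega_\lambda(\infty)$, choose a finite $x$ large enough that $\omega_\lambda(x)$ already agrees with $\omega_\lambda(\infty)$ on the coordinates $0,\ldots,\ell+k$, and observe that $\sigma^\ell\omega_\lambda(x)=\omega_\lambda(T_\lambda^\ell(x))$ then exhibits the same excess, contradicting the finite case. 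This finishes the proof.
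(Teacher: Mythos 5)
Your proof is correct and follows the same route the paper intends: the paper's proof is the one-line remark that the lemma is ``an immediate consequence of the definition of $\omega_\lambda(\infty)$ and Lemma~\ref{lemma:omega croissant}'', i.e.\ exactly the combination of $\sigma^\ell\omega_\lambda(x)=\omega_\lambda(T_\lambda^\ell(x))$ with monotonicity and the limit definition that you spell out. Your careful handling of the $x=\infty$ case (via closedness of lexicographic order intervals in the product topology, or the equivalent finite-approximation contradiction) is a legitimate filling-in of a detail the paper leaves implicit, not a different argument.
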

\begin{proof}
This is an immediate consequence of the definition of $\omega_\lambda(\infty)$ and Lemma~\ref{lemma:omega croissant}.
\end{proof}

\begin{remark}
\label{Rmk:orbites}
There exist some links between $\omega_\lambda(\infty)$, $\omega_\lambda(\lambda)$ and $\omega_\lambda(\ell_\lambda)$. 
Let $\omega_\lambda(\lambda)=a_0a_1a_2\dots$ If $T_\lambda(\lambda)\not=0$, then $\omega_\lambda(\ell_\lambda)=\sigma(\omega_\lambda(\lambda))=a_1a_2\dots$
If $T_\lambda^j(\lambda)\not=0$ for any $j\ge1$, then 
$\omega_\lambda(\infty)=(a_0+1)\omega_\lambda(\ell_\lambda)=(a_0+1)a_1a_2\dots$
\end{remark}

\subsection{Matrices associated to the homographic functions $h_i$}
Recall that each homographic function can be written in the form $x\mapsto\dfrac{ax+b}{cx+d}$ where $ad-bc=1$ and is associated to the matrix 
$\begin{pmatrix}
a & b\\
c & d
\end{pmatrix}$. Composition of homographic functions corresponds to matrix multiplication. 

We thus introduce the matrices 
$$
H\egdef
\begin{pmatrix}
0 & 1\\
-1 & \lambda
\end{pmatrix}
\quad \text{and}\quad
H_0\egdef
\begin{pmatrix}
1 & 0\\
\lambda & 1
\end{pmatrix}
$$
respectively associated to $h$ and $h_0$. For $0\le i\le i_\lambda$, let $H_i\egdef H^iH_0$ be the matrix associated to $h_i$. An easy induction shows that $H_i$ is of the form
$$
H_i=
\begin{pmatrix}
P_{i+1}(\lambda) & P_{i}(\lambda)\\
P_{i+2}(\lambda) & P_{i+1}(\lambda)
\end{pmatrix},
$$
where the sequence of polynomials $(P_i)$ is defined by
$$ P_0(X)\egdef 0,\quad P_1(X)\egdef 1,\quad\text{and}\quad P_{i+2}(X)\egdef X P_{i+1}(X)-P_i(X). $$

\begin{lemma}
\label{Lemma:signOfPi}
 For $1\le i\le i_\lambda+1$, $P_i(\lambda)>0$, and $P_{i_\lambda+2}(\lambda)\le0$.
\end{lemma}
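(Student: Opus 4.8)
The plan is to read off the sign of $P_i(\lambda)$ from the position of the point $m_i^\lambda=h_i(0)$ relative to $\lambda$, using the matrix formula for $H_i$ together with the ordering $0=m_0^\lambda<\cdots<m_{i_\lambda-1}^\lambda<\lambda\le m_{i_\lambda}^\lambda$ already established after Lemma~\ref{Lemma:definition des m_i}.

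First I would record the two identities that drive everything. Evaluating $h_i$ at $0$ via the matrix $H_i$ gives, for $0\le i\le i_\lambda$,
$$ m_i^\lambda = h_i(0) = \frac{P_i(\lambda)}{P_{i+1}(\lambda)}, $$
where for $i=i_\lambda$ this is the left endpoint of $I_{i_\lambda}^\lambda=[m_{i_\lambda}^\lambda,\infty[$, since each $h_i$ is increasing. Subtracting from $\lambda$ and using the recurrence $P_{i+2}(\lambda)=\lambda P_{i+1}(\lambda)-P_i(\lambda)$ yields
$$ \lambda - m_i^\lambda = \frac{\lambda P_{i+1}(\lambda)-P_i(\lambda)}{P_{i+1}(\lambda)} = \frac{P_{i+2}(\lambda)}{P_{i+1}(\lambda)}. $$
So, whenever $P_{i+1}(\lambda)\neq 0$, the sign of $\lambda-m_i^\lambda$ is the sign of the ratio $P_{i+2}(\lambda)/P_{i+1}(\lambda)$.

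Next I would prove $P_{i+1}(\lambda)>0$ for all $0\le i\le i_\lambda$ by induction on $i$, which is exactly the assertion $P_j(\lambda)>0$ for $1\le j\le i_\lambda+1$. The base case is $P_1(\lambda)=1>0$. For the inductive step with $i<i_\lambda$, the ordering gives $m_i^\lambda<\lambda$, hence $\lambda-m_i^\lambda>0$, so by the second identity $P_{i+2}(\lambda)/P_{i+1}(\lambda)>0$; since the inductive hypothesis gives $P_{i+1}(\lambda)>0$, we get $P_{i+2}(\lambda)>0$, closing the induction. Finally, for the last inequality I would apply the same identity at $i=i_\lambda$: here $m_{i_\lambda}^\lambda\ge\lambda$, so $\lambda-m_{i_\lambda}^\lambda\le 0$, whence $P_{i_\lambda+2}(\lambda)/P_{i_\lambda+1}(\lambda)\le 0$, and since $P_{i_\lambda+1}(\lambda)>0$ was just proved, this forces $P_{i_\lambda+2}(\lambda)\le 0$.

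I do not expect a serious obstacle; the only points requiring care are the bookkeeping that $m_i^\lambda<\lambda$ holds precisely for $i<i_\lambda$ while $m_{i_\lambda}^\lambda\ge\lambda$ (the content of the displayed chain of inequalities), and the fact that each identity is legitimate because $P_{i+1}(\lambda)\neq 0$ — but positivity is carried along through the induction, so this is automatic. As an alternative one could write $\lambda=2\cos\theta$ and recognise $P_i(\lambda)=\sin(i\theta)/\sin\theta$, deducing the signs from the location of $i\theta$ relative to $\pi$; however, the inductive argument above relies only on facts already proved and is more self-contained.
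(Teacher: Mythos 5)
Your proof is correct, and it takes a slightly different route from the paper's. The paper argues qualitatively about the homographic maps: since $h_i$ has no pole on $[0,\infty[$ for $i<i_\lambda$, all $P_i(\lambda)$ with $1\le i\le i_\lambda+1$ are nonnegative; strict positivity is then extracted from the recurrence (if $P_i(\lambda)=0$ and $P_{i-1}(\lambda)>0$ then $P_{i+1}(\lambda)<0$, a contradiction), with a separate ad hoc argument to rule out $P_{i_\lambda+1}(\lambda)=0$; finally $P_{i_\lambda+2}(\lambda)\le0$ comes from the unboundedness of $h_{i_\lambda}$ on $[0,\ell_\lambda[$. You instead make everything quantitative through the single identity $\lambda-m_i^\lambda=P_{i+2}(\lambda)/P_{i+1}(\lambda)$ (valid once $P_{i+1}(\lambda)\neq0$, which your induction carries along), so that the signs of the $P_i(\lambda)$ are read directly off the chain $m_0^\lambda<\cdots<m_{i_\lambda-1}^\lambda<\lambda\le m_{i_\lambda}^\lambda$. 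This buys a uniform one-pass induction that handles the positivity claims and the final inequality by the same mechanism, and it dispenses with the paper's intermediate ``nonnegative, then positive'' step and its special treatment of $P_{i_\lambda+1}$; the paper's version, in exchange, previews the pole/boundedness reasoning about matrices of homographies that it reuses later (e.g.\ in Lemma~\ref{Lemma:coeffMatrice}). Both arguments ultimately rest on the same established facts, so your proof is a legitimate, self-contained alternative.
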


\begin{proof}
Since, for $i<i_\lambda$, the function $h_i$ has no pole on $[0,\infty[$, all the $P_i(\lambda)$ are nonnegative for $1\le i\le i_\lambda+1$. It follows that, for $1\le i\le i_\lambda$, $P_i(\lambda)>0$ (for, if $P_i(\lambda)=0$ and $P_{i-1}(\lambda)>0$, then $P_{i+1}(\lambda)<0$). If we had $P_{i_\lambda+1}(\lambda)=0$, $h_{i_\lambda-1}$ would be an affine function with positive slope, and then $m_{i_\lambda}^\lambda$ would be $\infty$, which is not possible. Hence $P_{i_\lambda+1}>0$.

Finally, since $[m_{i_\lambda}^\lambda,\infty[=h_{i_\lambda}([0,\ell_\lambda[)$, we have $P_{i_\lambda+2}\le 0$ (otherwise $h_{i_\lambda}$ would be bounded on $[0,\ell_\lambda[$).
\end{proof}

\subsection{Geometrical interpretation of $P_i(\lambda)$}
\label{Section:Geometrical Interpretation of P_i}

The key observation is the following: Let $\theta\in ]0,\pi/2[$ be such that $\lambda=2\cos\theta$. 
Fix two points $M,M'$ on a circle centered at the origin $O$, such that the oriented angle $(OM,OM')$ equals $\theta$. 
Let $M''$ be the image of $M'$ by the rotation of angle $\theta$ and center $O$. Then the respective abscissae $t$, $t'$ and $t''$ of $M$, $M'$ and $M''$ satisfy $t''=\lambda t' - t$. 

Let us consider the circle centered at the origin with radius $R=1/\cos(\theta-\pi/2)$.
We fix on the circle the point $M_0=(0,-R)$, and define the sequence of points $(M_i)$ such that $M_i$ is the image of $M_{i-1}$ by the rotation of angle $\theta$ and center $O$. 
Let $t_i$ be the abscissa of $M_i$. 
Observe that $t_0=0=P_0(\lambda)$, $t_1=1=P_1(\lambda)$ by choice of $R$, and by induction $t_i=P_i(\lambda)$ for all $i\ge0$. (See Figure~\ref{Fig:cercle}.)

\begin{figure}[h]
\input{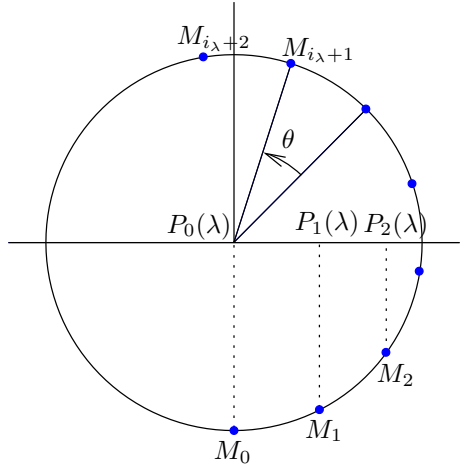}
\caption{Geometrical interpretation of the sequence $P_i(\lambda)$ as the successive abscissae of points on the circle centered at the origin with radius $R=1/\cos(\pi/2-\theta)$, where $\theta$ is such that $\lambda=2\cos\theta$.}
\label{Fig:cercle}
\end{figure} 

\begin{prop}
\label{Prop:i_lambda}
 Let us define the increasing sequence $(\lambda_k)_{k\ge2}$ by $\lambda_k\egdef2\cos(\pi/k)$. 
Then $i_\lambda=k-2$ for $\lambda\in]\lambda_{k-1},\lambda_{k}]$ ($\forall k\ge3$).
\end{prop}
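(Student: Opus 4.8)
The plan is to read off $i_\lambda$ from the sign pattern of the sequence $\bigl(P_i(\lambda)\bigr)$, using the explicit closed form that is already implicit in the geometrical picture of Section~\ref{Section:Geometrical Interpretation of P_i}. First I would reformulate Lemma~\ref{Lemma:signOfPi}: it states that $P_i(\lambda)>0$ for $1\le i\le i_\lambda+1$ while $P_{i_\lambda+2}(\lambda)\le0$, and since $P_1(\lambda)=1>0$ this amounts to saying that $i_\lambda+2$ is the smallest index $i\ge1$ with $P_i(\lambda)\le0$. Thus the whole statement reduces to locating the first non-positive term of $(P_i(\lambda))$.

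Next I would make the closed form explicit. Writing $\lambda=2\cos\theta$ with $\theta\in\,]0,\pi/2[$, the point $M_i$ lies at angle $i\theta-\pi/2$ on the circle of radius $R=1/\sin\theta$, so its abscissa is $t_i=R\sin(i\theta)$, which gives
$$ P_i(\lambda)=\frac{\sin(i\theta)}{\sin\theta}. $$
(Equivalently, one checks directly that $\sin(i\theta)/\sin\theta$ obeys the recurrence $P_{i+2}=\lambda P_{i+1}-P_i$ with the correct initial values $P_0=0$, $P_1=1$.) Since $\sin\theta>0$, the sign of $P_i(\lambda)$ is simply the sign of $\sin(i\theta)$.

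I would then determine the first non-positive term. As $i$ increases by $1$, the argument $i\theta$ increases by $\theta<\pi/2<\pi$, so it cannot skip over the interval $]\pi,2\pi[$; consequently $\sin(i\theta)$ remains strictly positive exactly as long as $i\theta<\pi$, and the first index with $\sin(i\theta)\le0$ is $i=\lceil\pi/\theta\rceil$ (the value $0$ being attained precisely when $\pi/\theta\in\NN$). Hence $i_\lambda+2=\lceil\pi/\theta\rceil$, i.e. $i_\lambda=\lceil\pi/\theta\rceil-2$. Finally, $i_\lambda=k-2$ is equivalent to $\lceil\pi/\theta\rceil=k$, that is $\pi/k\le\theta<\pi/(k-1)$; and since $\lambda=2\cos\theta$ is strictly decreasing in $\theta$, this band of angles corresponds to $\lambda\in\,]2\cos(\pi/(k-1)),2\cos(\pi/k)]=\,]\lambda_{k-1},\lambda_k]$, as claimed.

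The computation is short once the closed form is in hand, so I expect the only real care to be in the rounding bookkeeping of the last two steps: one must confirm that the first crossing of $\sin(i\theta)$ genuinely falls inside $]\pi,2\pi[$ (so the term is negative, or zero at the endpoint, rather than being skipped), which is exactly what $\theta<\pi/2$ guarantees, and one must check that the closed endpoint $\lambda=\lambda_k$ is correctly included — there $\theta=\pi/k$ and $P_k(\lambda)=0$, matching the non-strict inequality $P_{i_\lambda+2}(\lambda)\le0$ in Lemma~\ref{Lemma:signOfPi}.
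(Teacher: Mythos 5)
Your proposal is correct and follows the same route as the paper: both reduce the claim to the sign pattern of $\bigl(P_i(\lambda)\bigr)$ via Lemma~\ref{Lemma:signOfPi} and then read off the first non-positive term from the geometrical interpretation, which is exactly the closed form $P_i(\lambda)=\sin(i\theta)/\sin\theta$ you write out. The paper merely leaves the ceiling bookkeeping implicit, whereas you spell it out; your endpoint check ($\theta=\pi/k$ giving $P_k(\lambda)=0$, matching the non-strict inequality) is a welcome detail.
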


\begin{proof}
Recall that $i_\lambda$ is characterized by the fact that $P_{i_\lambda+2}(\lambda)\le 0$ and $P_i(\lambda)>0$ for $1\le i\le i_\lambda+1$. 
Since for $\lambda=2\cos \theta \in]\lambda_{k-1},\lambda_{k}]$, we have $\pi/k\le \theta<\pi/(k-1)$, the result of Proposition~\ref{Prop:i_lambda} is a direct consequence of our geometrical interpretation. 
%
\end{proof}

Besides, we see that the $P_i(\lambda)$'s are bounded by $R=1/\cos(\pi/2-\theta)$, and that
\begin{equation}
\label{eq:P_j inequalities}
 P_i(\lambda)>1\quad\text{for }2\le i\le i_\lambda,\quad\text{and }0\le P_{i_\lambda+1}(\lambda)\le1.
\end{equation}

%
%

\subsection{Geometrical interpretation of $T_\lambda$}
\label{Section:Geometrical Interpretation}
By definition, $T_\lambda(x)$ can be obtained by the following recursive algorithm:

If $0\le x<1/\lambda$, $T_\lambda(x)=h_0^{-1}(x)=\dfrac{x}{1-\lambda x}$, 

else $T_\lambda(x)=T_\lambda(h^{-1}(x))=T_\lambda\left( \dfrac{\lambda x-1}{x}\right)$.

Suppose that $x$ is written $x=t_1/t_0$, with $t_1\ge0$, $t_0>0$. By Lemma~6.4 in~\cite{janvresse2010}, we can find a circle centered at the origin and two points $M_0$ and $M_1$ on this circle with respective abscissae $t_0$ and $t_1$, such that the oriented angle $(OM_0,OM_1)$ equals~$\theta$. Let $M_2$ be the image of $M_1$ by the rotation of angle $\theta$, and denote its abscissa by $t_2=\lambda t_1-t_0$. 

If $x<1/\lambda$, then $t_2<0$. We get 
$$T_\lambda(x)=h_0^{-1}(x)=\dfrac{t_1}{t_0-\lambda t_1}=\dfrac{t_1}{-t_2} ,
\mbox{ and }
H_0 \begin{pmatrix}   t_1  \\ -t_2  \end{pmatrix}
= \begin{pmatrix}  t_1  \\ t_0  \end{pmatrix}.
$$
Else, we have $t_2\ge0$. We obtain 
$$
T_\lambda(x)=T_\lambda(h^{-1}(x))=T_\lambda\left( \dfrac{\lambda t_1-t_0}{t_1}\right)=T_\lambda\left(\dfrac{t_2}{t_1}\right)
\mbox{ and }
H \begin{pmatrix}  t_2  \\ t_1  \end{pmatrix}
=  \begin{pmatrix}  t_1  \\ t_0  \end{pmatrix}.
$$
We recursively define the points $M_j$ on the circle, where $M_j$ is the image of $M_{j-1}$ by the rotation of angle $\theta$. Denote by $t_j$ the abscissa of $M_j$ and let $i(x)$ be such that $t_{i(x)+1}$ is the first negative abscissa. 
Then $x\in I_{i(x)-1}^\lambda$, and
\begin{equation}
 \label{eq:T_lambda}
T_\lambda(x)=\dfrac{t_{i(x)}}{-t_{i(x)+1}}=T_\lambda\left(\dfrac{t_{j+1}}{t_j}\right)\quad \forall 0\le j<i(x)\ .
\end{equation}
Moreover, 
\begin{equation}
 \label{eq:T_lambda matrices}
H_{i(x)} \begin{pmatrix}  t_{i(x)}  \\ -t_{i(x)+1}  \end{pmatrix}
=  \begin{pmatrix}  t_1  \\ t_0  \end{pmatrix}.
\end{equation}

If we want to iterate $T_\lambda$ in this setting, we have to find a new circle centered at the origin and two points $N_0$ and $N_1$ on this circle with respective abscissae $-t_{i(x)+1}$ and $t_{i(x)}$, such that the oriented angle $(ON_0,ON_1)$ equals~$\theta$. Let us denote by $R'$ the radius of the new circle, whereas $R$ stands for the radius of the first circle. 
(See Figure~\ref{Fig:cercles}.)

\begin{prop}
\label{Prop:R decroit}
With the above notations, we always have $R'\le R$. Moreover, there exists $K(\lambda)>1$ such that $R'\le R/K(\lambda)$ whenever $T_\lambda(x)$ and $T_\lambda^2(x)$ do not both belong to $I_0^\lambda$, and $T_\lambda(x)\notin I_{i_\lambda}^\lambda$.
\end{prop}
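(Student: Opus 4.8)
The plan is to reduce everything to a single elementary identity about the abscissae of two points on a circle, and then to translate the two dynamical hypotheses into a statement that keeps $T_\lambda(x)$ bounded away from $0$ and from $\infty$.

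First I would record the following fact: if two points lie on a circle of radius $r$ centered at $O$, are separated by an oriented angle $\theta$, and have abscissae $a$ and $b$, then
$$ r^2\sin^2\theta = a^2 + b^2 - 2ab\cos\theta. $$
This follows by writing the points as $(r\cos\alpha,r\sin\alpha)$ and $(r\cos(\alpha+\theta),r\sin(\alpha+\theta))$, extracting $r\sin\alpha=(a\cos\theta-b)/\sin\theta$ from $b=a\cos\theta-r\sin\alpha\sin\theta$, and summing the squares $a^2+(r\sin\alpha)^2=r^2$. Note that the right-hand side is symmetric in $a,b$, so the labelling of the two points is irrelevant.

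I would then apply this identity twice. For the first circle the points $M_{i(x)}$ and $M_{i(x)+1}$ have abscissae $t_{i(x)}\ge0$ and $t_{i(x)+1}<0$; setting $u\egdef t_{i(x)}$ and $v\egdef -t_{i(x)+1}>0$ gives $R^2\sin^2\theta = u^2+v^2+2uv\cos\theta$. For the new circle the points $N_0,N_1$ have abscissae $v$ and $u$, so $R'^2\sin^2\theta=u^2+v^2-2uv\cos\theta$. Since $u,v\ge0$ and $\cos\theta>0$, the second quantity is at most the first, which already proves $R'\le R$. Dividing the two identities and using $s\egdef u/v=T_\lambda(x)$ (by~(\ref{eq:T_lambda})) yields
$$ \frac{R'^2}{R^2} = \frac{s^2+1-2s\cos\theta}{s^2+1+2s\cos\theta} = 1-\frac{4s\cos\theta}{s^2+1+2s\cos\theta}, $$
a quantity that is strictly below $1$ for every $s>0$ and tends to $1$ only as $s\to0$ or $s\to\infty$. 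It therefore suffices to confine $s$ to a fixed interval $[c_1,c_2]$ with $0<c_1$ and $c_2<\infty$ depending only on $\lambda$; then $K(\lambda)$ may be taken as the reciprocal of the square root of the maximum (which is $<1$) of the above continuous ratio over $[c_1,c_2]$.

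It remains to extract these two bounds from the hypotheses. The upper bound is immediate: $T_\lambda(x)\notin I_{i_\lambda}^\lambda=[m_{i_\lambda}^\lambda,\infty[$ forces $s<m_{i_\lambda}^\lambda$, so $c_2=m_{i_\lambda}^\lambda$ works. The lower bound is where the first hypothesis enters: if $s<1/(2\lambda)$, then in particular $s<1/\lambda$, so $T_\lambda(x)=s\in I_0^\lambda$ and $T_\lambda^2(x)=h_0^{-1}(s)=s/(1-\lambda s)$, which satisfies $s/(1-\lambda s)<1/\lambda$ exactly when $s<1/(2\lambda)$; thus both $T_\lambda(x)$ and $T_\lambda^2(x)$ would lie in $I_0^\lambda$, contradicting the assumption. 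Hence $s\ge 1/(2\lambda)=:c_1>0$, completing the argument. I expect the main obstacle to be precisely this last paragraph — pinning down the threshold $1/(2\lambda)$ and verifying the equivalence ``both of the first two iterates lie in $I_0^\lambda$'' $\iff$ ``$s<1/(2\lambda)$'' so that $s$ is genuinely held away from the two degenerate values $0$ and $\infty$; the geometry itself is routine once the circle identity is established.
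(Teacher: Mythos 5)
Your proof is correct, but it takes a genuinely different route from the paper's. The paper works with the arguments of the points: writing $\alpha$ for the argument of $M_{i(x)}$ and $\tau$ for that of $N_1$, it derives $\tan\tau=\tan\alpha-2\cos\theta/\sin\theta$, deduces $-\alpha<\tau\le\alpha$ (hence $R'\le R$), and then translates the two hypotheses into a confinement $\pi/2-\theta+\delta(\lambda)<\alpha\le\pi/2-\delta(\lambda)$ — the first hypothesis is routed through the interval $I_{000}^\lambda$ via the point $t_{i(x)}/t_{i(x)-1}$ — from which a uniform gap $\tau<\alpha-C(\lambda)$ follows. You instead use the symmetric identity $r^2\sin^2\theta=a^2+b^2-2ab\cos\theta$ to get a closed-form expression
$$\frac{R'^2}{R^2}=1-\frac{4s\cos\theta}{s^2+1+2s\cos\theta},\qquad s=T_\lambda(x),$$
and then pin $s$ into the compact interval $[1/(2\lambda),\,m_{i_\lambda}^\lambda]$; your threshold $1/(2\lambda)$ is exactly $\sup I_{00}^\lambda$, so your translation of the first hypothesis is in fact more direct than the paper's (two iterates in $I_0^\lambda$ rather than three). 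All the steps check out: the identity is elementary and symmetric in $a,b$ so the orientation of the angle is irrelevant; $u\ge0$, $v>0$ and $\cos\theta>0$ give $R'\le R$ immediately; and $1/(2\lambda)<1/\lambda=m_1^\lambda\le m_{i_\lambda}^\lambda$ guarantees the interval is nonempty. What your approach buys is an explicit contraction factor and explicit constants, which would make the rate $\rho_n$ in Lemma~\ref{Lemma:uniform} effectively computable, and it sidesteps the paper's slightly delicate boundary case $\alpha=\pi/2$ where $\tan\alpha=\infty$; what the paper's buys is consistency with the rotational picture it reuses in the proof of Proposition~\ref{Prop:staircase}.
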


\begin{figure}[h]
\input{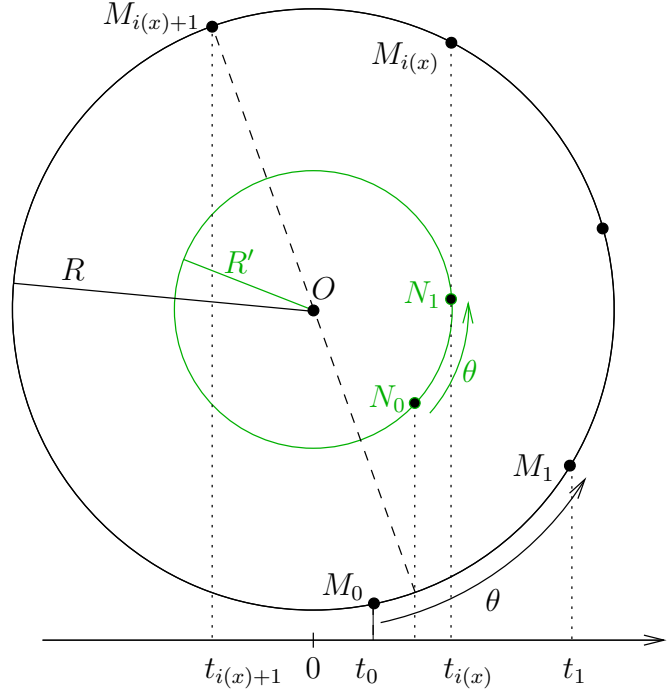}
\caption{Geometrical interpretation of $T_\lambda$: The transformation maps $x=t_1/t_0$ to $t_{i(x)}/(-t_{i(x)+1})$. The figure also illustrates the evolution of the radius of the circle.}
\label{Fig:cercles}
\end{figure} 

\begin{proof}
Let $\alpha$ be the argument of $M_{i(x)}$, so that $t_{i(x)}=R\cos \alpha$ and $t_{i(x)+1}=R\cos(\alpha+\theta)$. Observe that $$ \frac{\pi}{2}-\theta < \alpha \le \frac{\pi}{2} $$
because $t_{i(x)+1}<0$ and $t_{i(x)}\ge 0$. Let $\tau$ be the argument of $N_1$, so that $t_{i(x)}=R'\cos\tau$ and $-t_{i(x)+1}=R'\cos(\tau-\theta)$. Observe that, since $-t_{i(x)+1}>0$, $\tau-\theta>-\pi/2$, hence 
\begin{equation}
 \label{eq:tau}
 \tau>-\alpha.
\end{equation}
We want to estimate the ratio
$$ \frac{R}{R'} = \frac{\cos\tau}{\cos\alpha} = \frac{\cos(\tau-\theta)}{\cos(\alpha+\theta+\pi)} . $$
Write the last equality in the form
$$ \frac{\cos(\tau-\theta)}{\cos\tau} = \frac{\cos(\alpha+\theta+\pi)}{\cos\alpha}.$$
Expanding the cosines yields
\begin{equation}
 \label{eq:tan}
 \tan\tau =\tan\alpha - 2\frac{\cos\theta}{\sin\theta}\le\tan\alpha-\lambda, 
\end{equation}
hence $ \tau \le \alpha$ (where equality only holds when $\tan\alpha=\infty$, that is $\alpha=\pi/2$). Together with~\eqref{eq:tau}, this proves $R/R'\ge 1$.

\medskip

We are now going to prove that under the additional hypothesis of the Proposition, there exists $\delta(\lambda)>0$ such that
\begin{equation}
 \label{eq:alpha}
\frac{\pi}{2}-\theta +\delta(\lambda)< \alpha \le \frac{\pi}{2}-\delta(\lambda).
\end{equation}
Indeed, if $T_\lambda(x)$ and $T_\lambda^2(x)$ do not both belong to $I_0^\lambda$, and $T_\lambda(t_{i(x)}/t_{i(x)-1})=T_\lambda(x)$, we deduce that $t_{i(x)}/t_{i(x)-1}\notin I_{000}^\lambda$. 
Therefore 
$$ \frac{t_{i(x)}}{t_{i(x)-1}}=\frac{\cos\alpha}{\cos(\alpha-\theta)} \ge \sup I_{000}^\lambda > 0. $$
The inequality on the righthand side of~\eqref{eq:alpha} follows.

Assuming moreover that $T_\lambda(x)\notin I_{i_\lambda}^\lambda$, we get by~\eqref{eq:T_lambda}
$$ \frac{t_{i(x)}}{-t_{i(x)+1}} = -\frac{\cos\alpha}{\cos(\alpha+\theta)} < \min I_{i_\lambda}^\lambda, $$
which yields the inequality on the lefthand side of~\eqref{eq:alpha}.

\medskip

Recalling that $\tau>\theta-\pi/2$, we get by the lefthand side~\eqref{eq:alpha} that $\tau>-\alpha+\delta(\lambda)$.
Moreover, \eqref{eq:tan} together with the righthand side of~\eqref{eq:alpha} proves that there exists a constant $C(\lambda)>0$ such that $\tau<\alpha-C(\lambda)$.
Therefore, there exists a constant $K(\lambda)>1$ such that $R/R'=\cos\tau /\cos\alpha>K(\lambda)$.
\end{proof}

\section{Distinct points have different codings}
\label{Sec:distinct codings}

The purpose of this section is to prove the following statement:

\begin{theo}
\label{Thm:distinct_points}
For all $0\le x< x'\le\infty$, $\omega_\lambda(x)\neq\omega_\lambda(x')$
\end{theo}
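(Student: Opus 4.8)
The key tool is Proposition~\ref{Prop:R decroit}, which controls the shrinking of the radius $R$ under iteration of $T_\lambda$. The plan is to argue by contradiction: suppose $0\le x<x'\le\infty$ have $\omega_\lambda(x)=\omega_\lambda(x')$. Then for every $\ell\ge0$, the points $T_\lambda^\ell(x)$ and $T_\lambda^\ell(x')$ lie in the same cylinder $I_{x_\ell}^\lambda$, and by Lemma~\ref{lemma:omega croissant} (applied along the orbit) we have $T_\lambda^\ell(x)\le T_\lambda^\ell(x')$ for all $\ell$. I would show that the whole interval $[x,x']$ collapses: since $x$ and $x'$ share their coding, every point of $[x,x']$ shares it too, so $[x,x']\subseteq I_{x_0\ldots x_n}^\lambda$ for every $n$, and it suffices to prove that the lengths $|I_{x_0\ldots x_n}^\lambda|$ tend to $0$.

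**Geometric reformulation.** To estimate these lengths I would pass to the circle picture of Section~\ref{Section:Geometrical Interpretation}. Writing $x=t_1/t_0$ and $x'=t_1'/t_0'$ with $M_0,M_1$ (resp. $M_0',M_1'$) on a circle of radius $R$, the fact that $x,x'$ (and hence all their $T_\lambda$-iterates) have the same symbolic coding means that at each step the number $i(\cdot)$ of rotations agrees, so the two orbits stay on circles of a common radius $R_\ell$ at time $\ell$, and Proposition~\ref{Prop:R decroit} gives $R_{\ell+1}\le R_\ell$. The distance $|x'-x|$ on the original circle is comparable to the chord/arc between the two points, which scales like the radius; more precisely, because $T_\lambda$ acts as a common homography $h_{x_0}\circ\cdots\circ h_{x_{\ell-1}}$ on the cylinder, the length of $I_{x_0\ldots x_{\ell-1}}^\lambda$ is governed by the radius $R_\ell^{-1}$ after $\ell$ steps. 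So the task reduces to showing $R_\ell\to\infty$, i.e. that the bound $R'\le R/K(\lambda)$ is invoked infinitely often.

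**The main obstacle.** The real difficulty is that Proposition~\ref{Prop:R decroit} only guarantees strict contraction $R'\le R/K(\lambda)$ under the extra hypothesis that the coding does not fall into the exceptional patterns: one must avoid $T_\lambda(x),T_\lambda^2(x)$ both in $I_0^\lambda$, and $T_\lambda(x)\in I_{i_\lambda}^\lambda$. So I must rule out that the common coding $\omega_\lambda(x)=\omega_\lambda(x')$ is eventually trapped in these degenerate symbol blocks. The plan is to analyze each trap separately. A coding ending in $0^\infty$ corresponds to orbits converging to the fixed point $0$ of $h_0^{-1}$, and I would show two \emph{distinct} points cannot both have such a tail while keeping the same coding, by a direct contraction estimate for $h_0^{-1}$ near $0$ (where $h_0^{-1}(x)=x/(1-\lambda x)$ is a genuine contraction on a neighborhood of $0$ once we are close enough). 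Similarly a coding with an infinite tail of the maximal symbol $i_\lambda$ is incompatible with Lemma~\ref{lemma:lexico}, since $\sigma^\ell\omega_\lambda(x)\preceq\omega_\lambda(\infty)$ forces constraints that such a tail violates, unless $x=x'=\infty$. Having excluded the traps, the contraction hypothesis of Proposition~\ref{Prop:R decroit} holds along infinitely many steps, so $R_\ell\ge K(\lambda)^{\lfloor \ell/c\rfloor}R_0\to\infty$ for some fixed period $c$, whence $|I_{x_0\ldots x_\ell}^\lambda|\to 0$ and $x=x'$, the desired contradiction.
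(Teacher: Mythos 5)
Your proposal follows the paper's overall architecture (reduce to showing the cylinder lengths $|I_{x_0\ldots x_n}^\lambda|\to0$, drive this with the radius contraction of Proposition~\ref{Prop:R decroit}, and dispose separately of the degenerate tails of $0$'s and of $i_\lambda$'s), but there is a genuine gap at the point where you exclude the tail of maximal symbols, and this gap hides the hardest part of the theorem. You claim that an infinite tail of $i_\lambda$'s contradicts Lemma~\ref{lemma:lexico}. It does not: that lemma only gives $\sigma^\ell\omega_\lambda(x)\preceq\omega_\lambda(\infty)$, and since $\omega_\lambda(\infty)$ begins with $\infty_0=i_\lambda$, the constant sequence $i_\lambda i_\lambda\ldots$ satisfies $i_\lambda i_\lambda\ldots\succeq\omega_\lambda(\infty)$ for every $\lambda$, with equality exactly when $\lambda=\lambda_k$. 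So the lemma is compatible with such a tail precisely in the cases you would need to exclude. What is actually needed is the \emph{strict} statement $\omega_\lambda(x)\prec\omega_\lambda(\infty)$ for all finite $x$ (Proposition~\ref{Prop:infinityUnreachable} in the paper), which also covers the case $x'=\infty$ of the theorem --- a case your circle argument cannot reach, since $\infty$ has no representation $t_1/t_0$ with $t_0>0$ and the rightmost cylinders $I_{\infty_0\ldots\infty_n}^\lambda$ are unbounded for every $n$, so ``length tends to $0$'' is the wrong criterion there. The paper proves this by a separate matrix analysis of the upper branch $h_{\infty_0}\circ\cdots\circ h_{\infty_n}$ (sign control of the coefficients, monotonicity of $\delta_n$, convexity and slope at least $1$, so that the left endpoint increases by at least $1/\lambda$ each time $\infty_{n+1}\neq0$ and hence tends to $\infty$). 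None of this is present in, or replaceable by, what you wrote.

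Two smaller but real defects. First, the direction of your radius estimate is inverted: Proposition~\ref{Prop:R decroit} gives $R'\le R/K(\lambda)$, so the radii \emph{decrease} geometrically to $0$; the cylinder length is then controlled by $\rho_n^2$, via the identity $1=\gamma u_n(z)+\delta v_n(z)$ for the entries of the nonnegative matrix associated to a concatenation of standard blocks, which forces $\gamma\ge1/\rho_n$ and $\delta\ge1/\rho_n$ and hence $|I_{x_0\ldots x_n}^\lambda|=1/(\gamma\delta)\le\rho_n^2$. Your claim that $R_\ell\ge K(\lambda)^{\lfloor\ell/c\rfloor}R_0\to\infty$ is false as stated, and the passage from radius decay to interval length requires the standard-block decomposition (Lemma~\ref{Lemma:matrice associee bloc standard}) that you omitted. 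Second, $h_0^{-1}(x)=x/(1-\lambda x)$ is \emph{expanding}, not contracting, near $0$ (its derivative is $(1-\lambda x)^{-2}\ge1$); the correct elementary argument for the all-zero tail is that $h_0^{-n}(x)=x/(1-n\lambda x)$ leaves $[0,1/\lambda[$ for any $x>0$, or equivalently the paper's monotone-limit argument in Lemma~\ref{lemma:zero}. These two points are fixable, but the missing treatment of $\omega_\lambda(\infty)$ is not a detail: it constitutes roughly half of the paper's proof.
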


The first step is an elementary particular case.

\begin{lemma}
\label{lemma:zero}
For any $n\ge 0$, for any $a_0, \dots, a_{n-1}$, there is at most one $x$ such that $\omega_\lambda(x)=a_0a_1\ldots a_{n-1} 0\ldots 0 \ldots$.
\end{lemma}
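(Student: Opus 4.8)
The plan is to split the statement into two independent facts and then combine them. First I would settle the base case $n=0$, showing that the only point whose coding is $000\ldots$ is $x=0$. Then, for an arbitrary prefix $a_0\ldots a_{n-1}$, I would note that any $x$ with coding $a_0\ldots a_{n-1}000\ldots$ must satisfy $T_\lambda^n(x)=0$: indeed the coding of $T_\lambda^n(x)$ is $\sigma^n\omega_\lambda(x)=000\ldots$, so the base case applies to $T_\lambda^n(x)$. Finally I would observe that $T_\lambda^n$ restricted to the cylinder $I_{a_0\ldots a_{n-1}}^\lambda$ is a single homographic function, hence injective, so the equation $T_\lambda^n(x)=0$ has at most one solution inside the cylinder. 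That is exactly the assertion.

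For the base case, recall that on $I_0^\lambda=[0,1/\lambda[$ one has $T_\lambda(x)=h_0^{-1}(x)=\dfrac{x}{1-\lambda x}$. If $\omega_\lambda(x)=000\ldots$, then every iterate $T_\lambda^j(x)$ stays in $I_0^\lambda$, so $T_\lambda^n$ coincides on this orbit with the $n$-fold composite of $h_0^{-1}$. Using the associated matrix $H_0^{-1}=\left(\begin{smallmatrix}1&0\\-\lambda&1\end{smallmatrix}\right)$, whose $n$-th power is $\left(\begin{smallmatrix}1&0\\-n\lambda&1\end{smallmatrix}\right)$, I obtain the explicit formula $T_\lambda^n(x)=\dfrac{x}{1-n\lambda x}$. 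A direct check shows this value lies in $I_0^\lambda$ if and only if $x<\dfrac{1}{(n+1)\lambda}$; hence for any $x>0$ the orbit must leave $I_0^\lambda$ after finitely many steps, contradicting $\omega_\lambda(x)=000\ldots$. Thus $x=0$ is the unique point with all-zero coding. (Equivalently, $T_\lambda(x)>x$ for $x\in\,]0,1/\lambda[$, so a strictly increasing orbit cannot remain in $I_0^\lambda$ and converge to the only fixed point $0$.)

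For the general step I would use the structure of cylinders recalled in the text: for $x\in I_{a_0\ldots a_{n-1}}^\lambda$ one has $T_\lambda^j(x)\in I_{a_j}^\lambda$ for $0\le j<n$, and therefore $T_\lambda^n(x)=h_{a_{n-1}}^{-1}\circ\cdots\circ h_{a_0}^{-1}(x)$. Each $h_{a_j}^{-1}$ is homographic with determinant $1$, so their composite is again homographic, and thus injective on its domain; consequently the preimage of $0$ inside the cylinder is unique. Combined with the base case applied to $T_\lambda^n(x)$, this proves that at most one $x$ has coding $a_0\ldots a_{n-1}000\ldots$.

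The only genuinely delicate point is the base case, namely verifying that positive points really do escape $I_0^\lambda$; the closed-form iterate $\dfrac{x}{1-n\lambda x}$ disposes of it cleanly, and everything else reduces to the injectivity of a homographic map. A minor loose end is the value $x=\infty$: since its coding begins with the maximal symbol $i_\lambda>0$ and, being maximal for $\prec$, is not eventually zero, it cannot coincide with any word of the above form; alternatively one simply extends the homographic injectivity argument to the endpoint $\infty$, a Möbius map having a single zero on the projective line.
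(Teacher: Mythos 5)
Your proof is correct and follows essentially the same route as the paper's: first show that the all-zero coding forces $x=0$, then reduce the general case to this via the injectivity of $T_\lambda^n$ on the cylinder $I_{a_0\ldots a_{n-1}}^\lambda$ (the paper phrases this as an induction using the injectivity of $T_\lambda$ on each $I_i^\lambda$). The only cosmetic difference is in the base case, where you use the closed form $T_\lambda^n(x)=\frac{x}{1-n\lambda x}$ to show a positive point escapes $I_0^\lambda$, while the paper argues that the increasing orbit, bounded by $1/\lambda$, would have to converge to a fixed point of $h_0$, namely $0$; both arguments are valid.
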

\begin{proof}
We first prove that $\omega_\lambda(x)=0\ldots  0 \ldots \Rightarrow x=0$.
Note that for all $0\le y<\infty$, $h_0(y)\le y$, with equality only when $y=0$. If $\omega_\lambda(x)=0\ldots  0 \ldots$, for each $n$, $x=h_0^n(T_\lambda^nx)$ and the sequence $(T_\lambda^nx)$ is increasing, and bounded by $m_1^\lambda=1/\lambda$. Therefore it converges to a fixed point for $h_0$, that is 0. This is possible only if $T_\lambda^nx=0$ for all $n$.

The result follows by an easy induction on $n$, using the fact that $T_\lambda$ restricted to an interval $I_{i}^\lambda$ is one-to-one.
\end{proof}

As a corollary, we get the following lemma which will be useful.
\begin{lemma}
\label{Lemma:zeros}
 There exist infinitely many $n$ such that $\infty_n\neq0$.
\end{lemma}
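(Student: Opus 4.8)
The plan is to argue by contradiction and reduce the statement to Lemma~\ref{lemma:zero}. Suppose the conclusion fails, so that $\infty_n=0$ for every $n\ge N$ for some $N\ge 0$; that is, $\omega_\lambda(\infty)=\infty_0\cdots\infty_{N-1}\,0\,0\,0\cdots$ is eventually equal to $0$. I want to show that this would force infinitely many distinct points to share one and the same eventually-zero coding, which Lemma~\ref{lemma:zero} forbids.

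First I would unwind the meaning of the increasing limit defining $\omega_\lambda(\infty)$. For each fixed coordinate $j$, the map $x\mapsto\big(\omega_\lambda(x)\big)_j$ is non-decreasing by Lemma~\ref{lemma:omega croissant} and takes values in the finite alphabet $\{0,\dots,i_\lambda\}$, hence is eventually constant; by definition of $\omega_\lambda(\infty)=\lim_{x\to\infty}\uparrow\omega_\lambda(x)$ its eventual value is $\infty_j$. Applying this to $j=0,1,\dots,N$ simultaneously, I obtain a threshold $X<\infty$ such that for all $x\ge X$ the sequences $\omega_\lambda(x)$ and $\omega_\lambda(\infty)$ agree on their first $N+1$ symbols.

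Next I would upgrade this agreement on a prefix to full equality. Fix $x\ge X$. By Lemma~\ref{lemma:omega croissant} we have $\omega_\lambda(x)\preceq\omega_\lambda(\infty)$, so either $\omega_\lambda(x)=\omega_\lambda(\infty)$, or they first differ at some coordinate $\ell$, necessarily $\ell\ge N+1$ since they agree up to index $N$, with $\big(\omega_\lambda(x)\big)_\ell<\big(\omega_\lambda(\infty)\big)_\ell$. But $\big(\omega_\lambda(\infty)\big)_\ell=0$ for $\ell\ge N$, and no symbol is smaller than $0$; the second alternative is impossible. Hence $\omega_\lambda(x)=\omega_\lambda(\infty)=\infty_0\cdots\infty_{N-1}\,0\,0\cdots$ for \emph{every} $x\ge X$. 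This exhibits infinitely many distinct points with the same coding of the form $a_0\cdots a_{N-1}0\cdots0\cdots$, contradicting Lemma~\ref{lemma:zero}.

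The only genuinely delicate point is the first step: a priori $\omega_\lambda(\infty)$ is merely a lexicographic limit and need not be the coding $\omega_\lambda(x)$ of any actual point, so I must be careful to convert the limit into an honest coincidence on a long prefix valid for all large $x$. Once the monotonicity of each coordinate together with the finiteness of the alphabet secures that prefix agreement, the observation that $\omega_\lambda(\infty)$ cannot be exceeded from below beyond index $N$ (nothing is less than $0$) does the rest, and the contradiction with Lemma~\ref{lemma:zero} is immediate.
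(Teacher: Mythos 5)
Your argument is correct and is essentially the paper's own proof: the authors likewise note that if $\omega_\lambda(\infty)=\infty_0\cdots\infty_N 00\cdots$, then by definition of the increasing limit every sufficiently large $x$ would satisfy $\omega_\lambda(x)=\infty_0\cdots\infty_N 00\cdots$ (your "upgrade to full equality" step, which they leave implicit, being forced by $\omega_\lambda(x)\preceq\omega_\lambda(\infty)$ and the impossibility of a digit below $0$), contradicting Lemma~\ref{lemma:zero}. One small overstatement in your first step: Lemma~\ref{lemma:omega croissant} gives \emph{lexicographic} monotonicity, which does not make each coordinate map $x\mapsto\bigl(\omega_\lambda(x)\bigr)_j$ non-decreasing for $j\ge1$; the correct (and sufficient) statement is that prefixes stabilize --- coordinate $0$ is genuinely monotone and eventually constant, and once it has stabilized coordinate $1$ becomes monotone on that tail, and so on inductively --- which yields exactly the threshold $X$ you need.
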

\begin{proof}
 If $\omega_\lambda(\infty)=\infty_0\infty_1\ldots \infty_n 0\ldots 0 \ldots$, then by definition of $\omega_\lambda(\infty)$ we would have $\omega_\lambda(x)=\infty_0\infty_1\ldots \infty_n 0\ldots 0 \ldots$ for all large enough $x$. But this would contradict Lemma~\ref{lemma:zero}.
\end{proof}

\subsection{Infinity is unreachable}

The second step of the proof deals with the case of~$\infty$. 
\begin{prop}
\label{Prop:infinityUnreachable}
 For $x\in[0,\infty[$, $\omega_\lambda(x)\prec\omega_\lambda(\infty)$.
\end{prop}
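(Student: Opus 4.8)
The plan is to argue by contradiction. Since $\omega_\lambda(x)\preceq\omega_\lambda(\infty)$ for every finite $x$ (this is Lemma~\ref{lemma:lexico} with $\ell=0$, or directly the definition of $\omega_\lambda(\infty)$ as an increasing limit), it suffices to rule out equality. So suppose $\omega_\lambda(x)=\omega_\lambda(\infty)$ for some finite $x$. First I would upgrade this to an interval of constancy: for any $y\in[x,\infty[$, Lemma~\ref{lemma:omega croissant} gives $\omega_\lambda(x)\preceq\omega_\lambda(y)$, while the definition of $\omega_\lambda(\infty)$ gives $\omega_\lambda(y)\preceq\omega_\lambda(\infty)$; squeezing, $\omega_\lambda(y)=\omega_\lambda(\infty)$ for all $y\ge x$. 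Hence the whole ray $[x,\infty[$ is contained in the set $J:=\bigcap_{n\ge0}I^\lambda_{\infty_0\cdots\infty_n}$ of points whose coding is exactly $\omega_\lambda(\infty)$, and $T_\lambda^n$ is a single homographic branch on $[x,\infty[$ for every $n$. Note also that the orbit $(T_\lambda^n x)$ stays finite, since $T_\lambda$ sends finite points to finite points, so the geometric construction of Section~\ref{Section:Geometrical Interpretation} applies all along the orbit.

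The engine of the contradiction is the contraction of the radius in Proposition~\ref{Prop:R decroit}. Writing $R_n$ for the radius of the circle representing $T_\lambda^n x$, we have $R_{n+1}\le R_n$ always, and $R_{n+1}\le R_n/K(\lambda)$ as soon as $\infty_{n+1}\neq i_\lambda$ and $(\infty_{n+1},\infty_{n+2})\neq(0,0)$. I would check that these two conditions hold for infinitely many $n$, i.e. that there are infinitely many positions $p\ge1$ with $\infty_p\neq i_\lambda$ and $(\infty_p,\infty_{p+1})\neq(0,0)$. The conditions can fail for all large $p$ only if the tail of $\omega_\lambda(\infty)$ is eventually constant equal to $0$ or to $i_\lambda$ (any other symbol, or a $0\,i_\lambda$ transition, yields an admissible $p$). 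The all-$0$ tail is excluded by Lemma~\ref{Lemma:zeros}. An all-$i_\lambda$ tail would force some finite $T_\lambda^N x$ to remain in $I^\lambda_{i_\lambda}=[m^\lambda_{i_\lambda},\infty[$ under all iterates of $T_\lambda=h_{i_\lambda}^{-1}$; but by Lemma~\ref{Lemma:signOfPi} the fixed-point equation for $h_{i_\lambda}$ reads $P_{i_\lambda+2}(\lambda)\,y^2=P_{i_\lambda}(\lambda)$ with $P_{i_\lambda}(\lambda)>0\ge P_{i_\lambda+2}(\lambda)$, so $h_{i_\lambda}$ has no real fixed point, and the rotation picture of Section~\ref{Section:Geometrical Interpretation of P_i} shows a finite orbit must eventually leave $I^\lambda_{i_\lambda}$. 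With both tails excluded, the strong decrease occurs infinitely often and $R_n\to0$.

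It then remains to turn $R_n\to0$ into a contradiction with $[x,\infty[\subseteq J$. The point is that the radius ratio $R_0/R_n\to\infty$ measures the expansion of $T_\lambda^n$ in the projective metric underlying the circle construction (the chart in which a value $z$ is recorded by the argument of its representing point), so the cylinders $I^\lambda_{\infty_0\cdots\infty_n}$, being the branches mapped by $T_\lambda^n$ onto a fixed reference interval, have diameter in that metric bounded by a constant times $R_n/R_0\to0$. Thus $J=\bigcap_n I^\lambda_{\infty_0\cdots\infty_n}$ is reduced to a single point of the compactified line, which cannot contain the nondegenerate interval $[x,\infty[$; this is the desired contradiction. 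The main obstacle is precisely this last quantitative step: making rigorous the identification of $R_0/R_n$ with the expansion factor, i.e. bounding the projective diameter of the cylinders by a constant times $R_n/R_0$ uniformly, so that the infinitely many contraction steps of Proposition~\ref{Prop:R decroit} genuinely collapse $J$ to a point.
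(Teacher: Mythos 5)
Your route is genuinely different from the paper's. The paper argues directly: using Lemmas~\ref{Lemma:coeffMatrice} and~\ref{Lemma:deltadecroissant} it shows the upper branch $h_{\infty_0}\circ\cdots\circ h_{\infty_n}$ is convex with derivative $\delta_n^{-2}\ge P_{i_\lambda+1}(\lambda)^{-2}\ge 1$ at $0$, hence slope $\ge 1$ everywhere, so each occurrence of $\infty_{n+1}\neq 0$ pushes the left endpoint of $I^\lambda_{\infty_0\cdots\infty_n}$ up by at least $1/\lambda$; with Lemma~\ref{Lemma:zeros} this endpoint tends to $\infty$ and every finite $x$ eventually leaves the rightmost cylinder. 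Your reductions are correct and non-circular: equality for one finite $x$ does propagate to the whole ray; the failure set of the contraction condition is indeed an eventually-all-$0$ or eventually-all-$i_\lambda$ tail; the first is killed by Lemma~\ref{Lemma:zeros}; and your fixed-point computation $P_{i_\lambda+2}(\lambda)y^2=P_{i_\lambda}(\lambda)$ with $P_{i_\lambda}(\lambda)>0\ge P_{i_\lambda+2}(\lambda)$ correctly rules out the second for a finite orbit (since $h_{i_\lambda}(0)=m^\lambda_{i_\lambda}>0$ and there is no fixed point, $h_{i_\lambda}^{-1}(y)<y$ on $I^\lambda_{i_\lambda}$, so the orbit would decrease to a fixed point). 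So the engine $R_n\to 0$ is legitimately obtained.

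The genuine gap is the one you flag yourself: the passage from $R_n\to 0$ to the collapse of $\bigcap_n I^\lambda_{\infty_0\cdots\infty_n}$ is asserted, not proved, and it is not a formality. Note that the naive version fails: the rightmost cylinder is unbounded, so $R_0(z)$ blows up as $z\to\infty$ and you cannot bound $R_n(z)$ uniformly on the cylinder (this is exactly why the paper's Lemma~\ref{Lemma:uniform} must first invoke Proposition~\ref{Prop:infinityUnreachable} to get a bounded cylinder — a step you would be forbidden to copy). The statement that does work is about ratios, and it can be made exact: if $\psi$ denotes the argument of the point representing $z$ (so $z=\cos\psi/\cos(\psi-\theta)$, a diffeomorphism from the bounded interval $]\theta-\pi/2,\pi/2]$ onto $[0,\infty[$), then a direct computation with the determinant-one matrices gives $d\psi_{n+1}/d\psi_0=(R_0/R_{n+1})^2$ exactly; since $R_{n+1}(z)/R_0(z)\le K(\lambda)^{-s_n}$ uniformly on the cylinder (all its points share the same first symbols), the mean value theorem bounds the cylinder's length in the $\psi$-chart by $\pi K(\lambda)^{-2s_n}\to 0$, contradicting the fact that it contains the nondegenerate arc corresponding to $[x,\infty[$. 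Alternatively, you can close the argument in one line without any chart: from $\begin{pmatrix}x\\1\end{pmatrix}=H_{\infty_0}\cdots H_{\infty_n}\begin{pmatrix}u_n(x)\\v_n(x)\end{pmatrix}$ with $\gamma_n\le 0$, $u_n(x)\ge 0$ (Lemma~\ref{Lemma:coeffMatrice}) one gets $1\le\delta_n v_n(x)\le\delta_n R_n(x)$, so $\delta_n\to\infty$, contradicting Lemma~\ref{Lemma:deltadecroissant}. Either completion makes your proof valid; as written, the decisive step is missing.
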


For a fixed $n\ge0$, the intervals $I_{a_0\ldots a_n}^\lambda$ form a partition of $\RR_+$ associated to the coding of $T_\lambda$ up to time $n$.
The object of interest is here the decreasing sequence of rightmost intervals $(I_{\infty_0\ldots\infty_n}^\lambda)_n$ in the successive partitions. Observe that an equivalent statement to the above proposition is: For all $x\in[0,\infty[$, there exists $n$ large enough such that $x\notin I_{\infty_0\ldots\infty_n}^\lambda$.  

Before turning to the proof of Proposition~\ref{Prop:infinityUnreachable}, we prove a few results about matrices associated to iterates of the homographic functions $h_i$.

\subsubsection{Matrices}
\begin{lemma}
\label{Lemma:coeffMatrice}
 For each ${a_0 \ldots a_n}$ such that $I_{a_0 \ldots a_n}^\lambda\neq\emptyset$, the matrix $H_{a_0}\cdots H_{a_n}$ associated to $h_{a_0}\circ\cdots\circ h_{a_n}$ is of the form 
$\begin{pmatrix}
\alpha & \beta\\
\gamma & \delta
\end{pmatrix}$
with $\beta\ge0$ and $\delta>0$. 
Moreover, $\gamma\le 0$ if $a_0 \ldots a_n=\infty_0\ldots\infty_n$.
\end{lemma}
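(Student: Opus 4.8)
The plan is to argue by induction on $n$, writing $H_{a_0}\cdots H_{a_n}=H_{a_0}\,M$ with $M:=H_{a_1}\cdots H_{a_n}=\begin{pmatrix}\alpha'&\beta'\\\gamma'&\delta'\end{pmatrix}$. If $I^\lambda_{a_0\ldots a_n}\neq\emptyset$, then choosing $x$ in it and applying $T_\lambda$ shows $T_\lambda(x)\in I^\lambda_{a_1\ldots a_n}$, so $I^\lambda_{a_1\ldots a_n}\neq\emptyset$ and the induction hypothesis applies to $M$, giving $\beta'\ge0$ and $\delta'>0$. Using $H_{a_0}=\begin{pmatrix}P_{a_0+1}&P_{a_0}\\P_{a_0+2}&P_{a_0+1}\end{pmatrix}$, the entries of the product are
\[
\beta=P_{a_0+1}\beta'+P_{a_0}\delta',\qquad
\delta=P_{a_0+2}\beta'+P_{a_0+1}\delta',\qquad
\gamma=P_{a_0+2}\alpha'+P_{a_0+1}\gamma'.
\]
The base case $n=0$ is read off directly from $H_{a_0}$ and Lemma~\ref{Lemma:signOfPi}. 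For $\beta\ge0$: since $0\le a_0\le i_\lambda$, Lemma~\ref{Lemma:signOfPi} gives $P_{a_0+1}>0$ and $P_{a_0}\ge0$ (with $P_0=0$), so $\beta=P_{a_0+1}\beta'+P_{a_0}\delta'\ge0$.

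For $\delta>0$ I would distinguish two cases. If $a_0<i_\lambda$, then $2\le a_0+2\le i_\lambda+1$, so by Lemma~\ref{Lemma:signOfPi} both $P_{a_0+2}>0$ and $P_{a_0+1}>0$, whence $\delta>0$ at once. The main obstacle is the case $a_0=i_\lambda$, where $P_{i_\lambda+2}\le0$ and the first term of $\delta$ may be negative; here the emptiness/nonemptiness hypothesis must be converted into the sign of $\delta$. If $P_{i_\lambda+2}=0$ (the degenerate affine case $m^\lambda_{i_\lambda}=\lambda$), then $\delta=P_{i_\lambda+1}\delta'>0$ directly. If $P_{i_\lambda+2}<0$, then $h_{i_\lambda}$ has pole $\ell_\lambda=-P_{i_\lambda+1}/P_{i_\lambda+2}>0$, and $I^\lambda_{i_\lambda a_1\ldots a_n}=h_{i_\lambda}\bigl(I^\lambda_{a_1\ldots a_n}\cap[0,\ell_\lambda[\bigr)$ is nonempty precisely when $\inf I^\lambda_{a_1\ldots a_n}<\ell_\lambda$. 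Since $h_{a_1}\circ\cdots\circ h_{a_n}$ is increasing, $\inf I^\lambda_{a_1\ldots a_n}=\beta'/\delta'$, so this reads $\beta'/\delta'<-P_{i_\lambda+1}/P_{i_\lambda+2}$; multiplying through by the positive quantities $\delta'$ and $-P_{i_\lambda+2}$ gives exactly $P_{i_\lambda+1}\delta'+P_{i_\lambda+2}\beta'=\delta>0$.

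For the ``moreover'' clause I would avoid induction, because the tail $\infty_1\ldots\infty_n$ of an $\infty$-prefix is in general \emph{not} itself an $\infty$-prefix, so the inductive hypothesis yields no control on $\gamma'$. Instead I would argue on the whole matrix at once, using a geometric fact. The interval $I^\lambda_{\infty_0\ldots\infty_n}$ is the rightmost piece of the level-$n$ partition of $[0,\infty[$ (there are finitely many nonempty pieces, ordered left to right by Lemma~\ref{lemma:omega croissant}); being rightmost, it has the form $[c,\infty[$, and as all $x>c$ share the length-$(n+1)$ prefix of their code, that prefix must coincide with $\infty_0\ldots\infty_n$, the defining prefix of $\omega_\lambda(\infty)$. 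In particular $I^\lambda_{\infty_0\ldots\infty_n}$ is unbounded. Now write the homographic function $g=h_{\infty_0}\circ\cdots\circ h_{\infty_n}$ as $g(y)=(\alpha y+\beta)/(\gamma y+\delta)$, with $\delta>0$ already established, and suppose for contradiction that $\gamma>0$. Then $\gamma y+\delta>0$ for all $y\ge0$, so $g$ has no pole on $[0,\infty[$ and tends to the finite limit $\alpha/\gamma$; hence $g$ is bounded on its domain and $I^\lambda_{\infty_0\ldots\infty_n}=g([0,r[)$ would be bounded, contradicting unboundedness. Therefore $\gamma\le0$, which closes the argument.
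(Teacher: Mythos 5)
Your proof is correct and follows essentially the same route as the paper: induction on $n$ for the signs of $\beta$ and $\delta$ (with the same expansion of the product $H_{a_0}\bigl(H_{a_1}\cdots H_{a_n}\bigr)$ and the same appeal to Lemma~\ref{Lemma:signOfPi}), and the same non-inductive argument for $\gamma\le 0$, namely that $\gamma>0$ together with $\delta>0$ would force $h_{\infty_0}\circ\cdots\circ h_{\infty_n}$ to be bounded on $[0,\infty[$, contradicting the fact that $I_{\infty_0\ldots\infty_n}^\lambda$ is the rightmost (hence unbounded) cylinder. The only divergence is the step $\delta>0$: the paper disposes of it in one line via $\beta/\delta=h_{a_0}\circ\cdots\circ h_{a_n}(0)\ge 0$, whereas your case analysis on $a_0<i_\lambda$ versus $a_0=i_\lambda$, converting nonemptiness of the cylinder into the pole inequality $\beta'/\delta'<-P_{i_\lambda+1}(\lambda)/P_{i_\lambda+2}(\lambda)$, is more laborious but if anything more airtight, since the paper's one-liner silently skips the degenerate possibility $\beta=0$.
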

\begin{proof}
We prove the result by induction on $n$. This is true for $n=0$ by Lemma~\ref{Lemma:signOfPi} (observe that $\infty_0=i_\lambda$). 
Assume the result is true up to $n$, and consider 
$\begin{pmatrix}
\alpha & \beta\\
\gamma & \delta
\end{pmatrix}=H_{a_0}\cdots H_{a_{n+1}}=H_{a_0}\Bigl(H_{a_1}\cdots H_{a_{n+1}}\Bigr)$. 
Observe that Lemma~\ref{Lemma:signOfPi} also ensures the positivity of the upper left coefficient of $H_{a_0}$. 
Hence, by induction hypothesis, $\beta\ge0$. Moreover, $\dfrac{\beta}{\delta}=h_{a_0}\circ\cdots\circ h_{a_{n+1}}(0)\ge0$, thus $\delta>0$.

If we have $\gamma>0$, since $\delta>0$ the homographic function associated to the matrix is bounded on $[0,\infty[$. This is impossible for $a_0 \ldots a_n=\infty_0\ldots\infty_n$, as $I_{\infty_0\ldots\infty_n}^\lambda$ is the rightmost interval of the partition at order $n+1$.
\end{proof}

\begin{lemma}
\label{Lemma:deltadecroissant}
Let $H^{(n)} = 
\begin{pmatrix}
\alpha_n & \beta_n\\
\gamma_n & \delta_n
\end{pmatrix}
\egdef H_{\infty_0}\cdots H_{\infty_n}$. Then  $\delta_{n+1}\le \delta_n$ for all $n\ge 0$.
\end{lemma}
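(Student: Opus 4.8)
The plan is to reduce the statement to a single-step inequality on the bottom row of the matrix product and then analyse it according to the value of the symbol $\infty_{n+1}$. From $H^{(n+1)}=H^{(n)}H_{\infty_{n+1}}$ and the explicit shape of $H_{\infty_{n+1}}$, writing $j:=\infty_{n+1}$ and $P_i:=P_i(\lambda)$, the bottom row satisfies
\[ \gamma_{n+1}=\gamma_n P_{j+1}+\delta_n P_{j+2},\qquad \delta_{n+1}=\gamma_n P_j+\delta_n P_{j+1}. \]
Thus $\delta_{n+1}\le\delta_n$ is equivalent to $\gamma_n P_j\le\delta_n(1-P_{j+1})$. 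The facts I would feed in are $\gamma_n\le0<\delta_n$ (Lemma~\ref{Lemma:coeffMatrice}) and the signs and sizes of the $P_i$ coming from Lemma~\ref{Lemma:signOfPi} and~\eqref{eq:P_j inequalities}. I then split on $j\in\{0,\ldots,i_\lambda\}$.

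The two extreme symbols are immediate. If $j=0$ then $P_0=0$ and $P_1=1$, so both sides vanish and in fact $\delta_{n+1}=\delta_n$. If $j=i_\lambda$ then $\gamma_n P_{i_\lambda}\le0$ (as $P_{i_\lambda}\ge0$) while $\delta_n(1-P_{i_\lambda+1})\ge0$ (as $P_{i_\lambda+1}\le1$ by~\eqref{eq:P_j inequalities}), so the inequality holds.

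The interesting range is $1\le j\le i_\lambda-1$, where $P_{j+1}>1$ by~\eqref{eq:P_j inequalities}, so the right-hand side is strictly negative and a mere sign count is insufficient: I need a quantitative lower bound on $|\gamma_n|/\delta_n$. Dividing through by $\delta_n P_j>0$, the desired inequality reads $|\gamma_n|/\delta_n\ge(P_{j+1}-1)/P_j$. Here $\gamma_n<0$ (otherwise $g_n:=h_{\infty_0}\circ\cdots\circ h_{\infty_n}$ would be affine and unbounded, forcing $\infty_{n+1}=i_\lambda$), and $\delta_n/|\gamma_n|$ is precisely the pole $r_n$ of the increasing homography $g_n$, i.e. $g_n([0,r_n[)=I^\lambda_{\infty_0\ldots\infty_n}$. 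Since this rightmost interval at order $n+1$ has $\infty_{n+1}=j$ as its last symbol, pulling back by $g_n^{-1}=T_\lambda^{n+1}$ shows that $T_\lambda^{n+1}(x)$ lies in $I_j^\lambda=[m_j^\lambda,m_{j+1}^\lambda[$ for all large $x$ and increases to $r_n$; hence $r_n\le m_{j+1}^\lambda=P_{j+1}/P_{j+2}$. It then suffices to check $P_{j+1}/P_{j+2}\le P_j/(P_{j+1}-1)$, that is $P_jP_{j+2}\ge P_{j+1}^2-P_{j+1}$ (all factors being positive). From the three-term recurrence one gets the Cassini-type identity $P_{j+1}^2-P_jP_{j+2}=1$ (it is constant in $j$, equal to its value at $j=0$), so $P_jP_{j+2}=P_{j+1}^2-1\ge P_{j+1}^2-P_{j+1}$ exactly because $P_{j+1}\ge1$. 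This establishes $\gamma_nP_j\le\delta_n(1-P_{j+1})$ and hence $\delta_{n+1}\le\delta_n$.

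The step I expect to be the real obstacle is the identification $r_n\le m_{j+1}^\lambda$: translating the analytic pole $\delta_n/|\gamma_n|$ of $g_n$ into the combinatorial constraint imposed by the value of the coding symbol $\infty_{n+1}$. Once that geometric bound is in hand, the remaining work is the elementary determinant/Cassini computation together with sign bookkeeping.
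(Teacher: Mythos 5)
Your proof is correct and follows essentially the same route as the paper's: the same bottom-row identity $\delta_{n+1}=\gamma_nP_j+\delta_nP_{j+1}$, the same split into $j\in\{0,i_\lambda\}$ (pure sign count) versus $0<j<i_\lambda$ (bounding the pole $-\delta_n/\gamma_n$ by the right endpoint $P_{j+1}/P_{j+2}$ of $I_j^\lambda$ and invoking the determinant identity $P_{j+1}^2-P_jP_{j+2}=1$ with $P_{j+1}>1$). Your explicit justification that $\gamma_n<0$ in the middle case (since $\gamma_n=0$ would force $\infty_{n+1}=i_\lambda$) is a small point the paper leaves implicit, but otherwise the arguments coincide.
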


\begin{proof}
Observe that $H^{(n+1)}=H^{(n)}H_i$, where $i={\infty_{n+1}}$ is the largest index such that the pole $-\delta_n/\gamma_n$ of $h_{\infty_0}\circ\cdots\circ h_{\infty_{n}}$ is larger than the left endpoint of $I_i^\lambda$, that is 
$-\delta_n/\gamma_n>h_i(0)=P_{i}(\lambda)/P_{i+1}(\lambda)$. (See Figure~\ref{Fig:branche infinie}.) Considering the lower right coefficient of the product $H^{(n)}H_i$, we get
$$ \delta_{n+1} =  \gamma_n P_i(\lambda) + \delta_n P_{i+1(\lambda)}. $$
If $i=i_\lambda$ or if $i=0$, $0<P_{i+1}(\lambda)\le 1$ by~\eqref{eq:P_j inequalities}. 
Since $\gamma_n\le 0$ by Lemma~\ref{Lemma:coeffMatrice}, we obtain in this case $\delta_{n+1}\le \delta_n$.
On the other hand, if $0<i<i_\lambda$, the pole $-\delta_n/\gamma_n$ of $h_{\infty_0}\circ\cdots\circ h_{\infty_{n}}$ is smaller than or equal to the right endpoint $P_{i+1}(\lambda)/P_{i+2}(\lambda)$ of $I_i^\lambda$.
Since the matrix $H_i$ has determinant 1, we can write $P_{i+1}(\lambda)^2-P_i(\lambda)P_{i+2}(\lambda)=1$, which is bounded above by $P_{i+1}(\lambda)$ by~\eqref{eq:P_j inequalities}.
It follows that 
$$ \dfrac{\delta_n}{-\gamma_n} \le \dfrac{P_{i+1}(\lambda)}{P_{i+2}(\lambda)} < \dfrac{P_i(\lambda)}{P_{i+1}(\lambda)-1}\ ,$$
and $\delta_{n+1}\le \delta_n$.
\begin{figure}[h]
\input{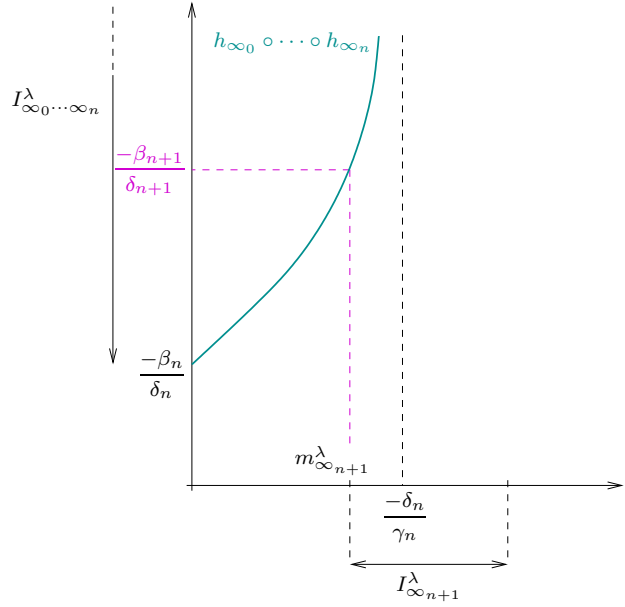}
\caption{The upper branch at order $n+1$. } 
\label{Fig:branche infinie}
\end{figure} 
\end{proof}

\subsubsection{Proof of Proposition~\ref{Prop:infinityUnreachable}}
The idea consists in proving that the slope of the upper branch $h_{\infty_0}\circ\cdots\circ h_{\infty_n}$ is always larger than 1, so that each time $\infty_{n+1}\neq0$, the left endpoint of the rightmost interval increases by at least $m_1^\lambda=1/\lambda$ (see Figure~\ref{Fig:branche infinie}).

\smallskip

Recall that the determinant of $H^{(n)}$ is equal to 1. 
Therefore, the second derivative of $h_{\infty_0}\circ\cdots\circ h_{\infty_n}$ is $-2\gamma_n/(\gamma_n x+\delta_n)^3$, which is nonnegative on $[0,-\delta_n/\gamma_n[$ by Lemma~\ref{Lemma:coeffMatrice}. 
This proves that $h_{\infty_0}\circ\cdots\circ h_{\infty_n}$ is convex on $[0,-\delta_n/\gamma_n[$.
Moreover, by Lemma~\ref{Lemma:deltadecroissant}, we get that $(h_{\infty_0}\circ\cdots\circ h_{\infty_n})'(0)=\delta_n^{-2}\ge \delta_0^{-2}=P_{i_\lambda+1}(\lambda)^{-2}\ge1$.
Hence, $(h_{\infty_0}\circ\cdots\circ h_{\infty_n})'\ge1$ on $[0,-\delta_n/\gamma_n[$.

Observe that 
$$
h_{\infty_0}\circ\cdots\circ h_{\infty_{n+1}}(0)=h_{\infty_0}\circ\cdots\circ h_{\infty_n}(m_{\infty_{n+1}}^\lambda).
$$ 
If $\infty_{n+1}\ge1$, $m_{\infty_{n+1}}^\lambda\ge m_1^\lambda=1/\lambda$. 
Hence, for all $n$ such that $\infty_{n+1}\ge1$, we get that 
$$
h_{\infty_0}\circ\cdots\circ h_{\infty_{n+1}}(0)\ge h_{\infty_0}\circ\cdots\circ h_{\infty_{n}}(0)+\frac{1}{\lambda}.
$$
But by Lemma~\ref{Lemma:zeros}, we know that there exist infinitely many such $n$'s, thus the left endpoint of $I_{\infty_0\ldots\infty_n}^\lambda$ satisfies 
$$h_{\infty_0}\circ\cdots\circ h_{\infty_{n}}(0)\tend{n}{\infty}\infty. $$
This concludes the proof of Proposition~\ref{Prop:infinityUnreachable}.$\hfill\square$

\subsection{Proof of Theorem~\ref{Thm:distinct_points}}
It remains to prove that for any $0\le x<\infty$, the length of $I_{x_0\dots x_n}^\lambda$ goes to $0$, where $x_0\dots x_n\dots =\omega_\lambda(x)$.
We have already dealt with the case when $x_n=0$ for all $n$ large enough (Lemma~\ref{lemma:zero}).
{From} now on, we assume that there exist infinitely many $n$'s such that $x_n\not=0$.

\begin{lemma}
\label{Lemma:uniform}
For all $n\ge0$, for all $z\in I_{x_0\dots x_n}^\lambda$, there exist real numbers $u_n(z)$ and $v_n(z)$ such that
\begin{equation}
\label{Eq:T_lambda(z)}
 T_\lambda^{n+1} z = \frac{u_n(z)}{v_n(z)}, \mbox{ where } 
H_{x_0}\cdots H_{x_n} \begin{pmatrix} u_n(z) \\  v_n(z) \end{pmatrix} = 
\begin{pmatrix} z \\  1 \end{pmatrix}.
\end{equation}
Moreover, there exists $(\rho_n)_n$ depending only on $\omega_\lambda(x)$ such that $0\le u_n(z) \le\rho_n$, $0<v_n(z)\le \rho_n$ and $\rho_n$ goes to $0$ as $n\to\infty$.
\end{lemma}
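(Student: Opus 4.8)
The plan is to prove Lemma~\ref{Lemma:uniform} by first establishing the algebraic identity~\eqref{Eq:T_lambda(z)} and then controlling the size of the quantities $u_n(z)$ and $v_n(z)$ using the geometric decay of radii from Proposition~\ref{Prop:R decroit}. For the identity, I would proceed by induction on $n$, mimicking the matrix computation in the geometrical interpretation of $T_\lambda$ (equations~\eqref{eq:T_lambda} and~\eqref{eq:T_lambda matrices}). For $z\in I_{x_0\dots x_n}^\lambda$, the point $T_\lambda^{n+1}z = h_{x_n}^{-1}\circ\cdots\circ h_{x_0}^{-1}(z)$, so if I set $\bigl(\begin{smallmatrix} u_n(z)\\ v_n(z)\end{smallmatrix}\bigr) \egdef (H_{x_0}\cdots H_{x_n})^{-1}\bigl(\begin{smallmatrix} z\\ 1\end{smallmatrix}\bigr)$, then~\eqref{Eq:T_lambda(z)} holds essentially by definition of how homographic functions act through their matrices, provided $v_n(z)>0$. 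The signs and positivity of $u_n(z), v_n(z)$ should follow from the fact that $T_\lambda^{n+1}z\in[0,\infty[$ together with the coefficient signs already recorded in Lemma~\ref{Lemma:coeffMatrice}.

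The core of the lemma is the uniform bound by $\rho_n\to0$ depending only on $\omega_\lambda(x)$, and this is where I would invoke the circle picture. The key is to reinterpret $u_n(z)$ and $v_n(z)$ as (up to sign) abscissae $t_{i}$ and $-t_{i+1}$ of consecutive points on a circle of radius $R_n$, as in Section~\ref{Section:Geometrical Interpretation}: each application of $T_\lambda$ corresponds to passing to a new circle of radius $R'\le R$, and more precisely $R'\le R/K(\lambda)$ whenever the hypotheses of Proposition~\ref{Prop:R decroit} are met. Since $u_n(z)$ and $v_n(z)$ are abscissae of points on the $n$-th circle, they are bounded in absolute value by its radius $R_n$, so it suffices to show $R_n\to0$. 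I would set $\rho_n \egdef R_n$ (or a fixed multiple thereof), noting that the sequence of radii depends only on the symbolic itinerary $\omega_\lambda(x)$ and not on the particular $z\in I_{x_0\dots x_n}^\lambda$, which is exactly the ``depending only on $\omega_\lambda(x)$'' clause.

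To get $R_n\to0$, I would argue that the contraction factor $K(\lambda)$ kicks in infinitely often. The forbidden configurations in Proposition~\ref{Prop:R decroit} are when $T_\lambda(x)$ and $T_\lambda^2(x)$ both lie in $I_0^\lambda$, or when $T_\lambda(x)\in I_{i_\lambda}^\lambda$. Under the standing assumption that $x_n\neq0$ for infinitely many $n$, I need to rule out that the itinerary eventually consists only of blocks that avoid strict contraction. The difficulty is bookkeeping: I must show that along the orbit the strictly-contracting step of Proposition~\ref{Prop:R decroit} is applied infinitely many times, so that $R_n\le R_0\,K(\lambda)^{-c(n)}$ with $c(n)\to\infty$. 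The excursions into $I_{i_\lambda}^\lambda$ correspond to symbols $x_j=i_\lambda$, and sustained stays in $I_0^\lambda$ correspond to runs of zeros; since zeros cannot occur from some point on (by hypothesis) and a careful case analysis shows that the excluded symbol $i_\lambda$ cannot occur at every relevant step either, one obtains infinitely many genuine contractions.

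\textbf{The main obstacle.} I expect the delicate point to be precisely this combinatorial argument showing that strict contraction occurs infinitely often from the hypothesis ``$x_n\neq0$ infinitely often,'' since Proposition~\ref{Prop:R decroit} only guarantees $R'\le R$ (non-strict) in the excluded cases, and one must exclude the pathological itineraries where the contraction is defeated at every step. A clean way to handle this is to group the orbit into blocks and show each block of bounded length produces at least one strictly-contracting step, giving a uniform geometric rate; alternatively, one can show directly that the non-strict cases cannot persist indefinitely without forcing $x_n=0$ eventually, contradicting the hypothesis.
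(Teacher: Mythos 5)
Your overall strategy is the same as the paper's: get the matrix identity by iterating~\eqref{eq:T_lambda} and~\eqref{eq:T_lambda matrices}, read $u_n(z)$ and $v_n(z)$ as abscissae of points on the $n$-th circle, and let Proposition~\ref{Prop:R decroit} drive the radius to zero. However, you stop exactly at the point the paper actually has to do work, and the two ways you suggest to close the gap do not quite succeed.

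First, the combinatorial step you flag as ``the main obstacle'' is resolved in the paper not by a block-grouping argument nor by showing that persistent non-contraction forces $x_n=0$, but by invoking Proposition~\ref{Prop:infinityUnreachable}: since $\sigma^\ell\omega_\lambda(x)\prec\omega_\lambda(\infty)$ strictly for every $\ell$, the itinerary cannot end with infinitely many consecutive $i_\lambda$'s (an all-$i_\lambda$ tail would force $\omega_\lambda(\infty)=i_\lambda i_\lambda\cdots$ and then equal it, a contradiction). Combined with the standing hypothesis that $x_n\neq 0$ infinitely often, this gives that the count $s_n$ of indices $j\le n-2$ with $x_{j+1}\neq i_\lambda$ and $(x_{j+1},x_{j+2})\neq(0,0)$ tends to infinity, so $R_n\le R_0\,K(\lambda)^{-s_n}$. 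Your second suggested route (``non-strict cases cannot persist without forcing $x_n=0$ eventually'') is wrong for the $i_\lambda$-tail case: a tail of $i_\lambda$'s does not force zeros; it must be excluded by the unreachability of $\infty$, which your argument never uses.

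Second, your assertion that ``the sequence of radii depends only on the symbolic itinerary $\omega_\lambda(x)$ and not on the particular $z$'' is incorrect: the initial radius $R_0(z)$ is determined by $z$ (it blows up as $z\to\infty$), and only the number of contraction steps depends on the itinerary. To get a bound $\rho_n$ valid for all $z\in I_{x_0\dots x_n}^\lambda$ one must take $\rho_n=\sup_{z\in I_{x_0\dots x_n}^\lambda}R_0(z)\cdot K(\lambda)^{-s_n}$, and for this supremum to be finite one needs $I_{x_0\dots x_n}^\lambda$ to be bounded for $n$ large --- which again is supplied by Proposition~\ref{Prop:infinityUnreachable} (choose $n$ with $x_0\cdots x_n\prec\infty_0\cdots\infty_n$). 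Both missing pieces trace back to the same omitted ingredient.
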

\begin{proof}
Let $n\ge0$ and $z\in I_{x_0\dots x_n}^\lambda$. We obtain~\eqref{Eq:T_lambda(z)} by iteration of~\eqref{eq:T_lambda} and~\eqref{eq:T_lambda matrices}. 
Recalling the geometrical interpretation in Section~\ref{Section:Geometrical Interpretation}, 
$u_n(z)$ and $v_n(z)$ can be seen as abscissae of points on a circle, whose radius $R_n(z)$ is non-increasing with $n$. Moreover the initial radius $R_0(z)$ is a continuous function of $z$. By taking $n$ large enough so that $x_0\cdots x_n\prec \infty_0\cdots\infty_n$ (application of Proposition~\ref{Prop:infinityUnreachable}), $I_{x_0\dots x_n}^\lambda$ is bounded, hence $R_0(z)$ is bounded on $I_{x_0\dots x_n}^\lambda$.

Let $s_n$ be the number of $j\in \{0,\ldots,n-2\}$ such that $x_{j+1}\neq i_\lambda$ and $(x_{j+1},x_{j+2})\neq (0,0)$. Since we assumed that the orbit of $x$ does not end with infinitely many $0$'s, and since it cannot end with infinitely many $i_\lambda$'s (consequence of Proposition~\ref{Prop:infinityUnreachable}), we get $s_n\tend{n}{\infty}\infty$. For each $z\in I_{x_0\dots x_n}^\lambda$, the number of times the hypothesis of Proposition~\ref{Prop:R decroit} are fulfilled up to time $n$ is $s_n$, and whenever they are fulfilled, the radius is divided by at least $K(\lambda)>1$. We thus get the announced result with
$$ \rho_n \egdef \sup_{z\in I_{x_0\dots x_n}^\lambda} R_0(z) \ K(\lambda)^{-s_n}. $$
\end{proof}

We say that a finite sequence $a_0,\dots, a_n$ is a \emph{standard block} if $a_i=\infty_i$ for all $i<n$ and $a_n<\infty_n$.
By Proposition~\ref{Prop:infinityUnreachable}, $\omega_\lambda(x)$ can be decomposed into standard blocks in a unique way. The interest of standard blocks is enlightened by the following result:

\begin{lemma}
\label{Lemma:matrice associee bloc standard}
If $a_0,\dots, a_n$ is a standard block, then the matrix $H_{a_0}\cdots H_{a_n}$ associated to $h_{a_0}\circ\cdots\circ h_{a_n}$ has nonnegative coefficients. 
Moreover, $I_{a_0\dots a_n}^\lambda=h_{a_0}\circ\cdots\circ h_{a_n}([0,\infty[)$.
\end{lemma}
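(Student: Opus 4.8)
The plan is to establish the matrix assertion first and then read off the interval identity as a consequence, the point being that once $H_{a_0}\cdots H_{a_n}$ has nonnegative coefficients (with the lower-right one positive) the associated homographic function cannot have a pole in $[0,\infty[$. The decomposition dictated by the standard-block hypothesis is what makes this work: the prefix $a_0\dots a_{n-1}=\infty_0\dots\infty_{n-1}$ is an ``infinity branch'', so I can bring to bear everything already known about $g\egdef h_{\infty_0}\circ\cdots\circ h_{\infty_{n-1}}$, whose matrix is $H^{(n-1)}=\begin{pmatrix}\alpha_{n-1}&\beta_{n-1}\\\gamma_{n-1}&\delta_{n-1}\end{pmatrix}$ with $\beta_{n-1}\ge0$, $\delta_{n-1}>0$, $\gamma_{n-1}\le0$ by Lemma~\ref{Lemma:coeffMatrice}, and which is increasing, convex and nonnegative on $[0,x^\ast[$ with $x^\ast\egdef-\delta_{n-1}/\gamma_{n-1}\in\,]0,\infty]$ (from the proof of Proposition~\ref{Prop:infinityUnreachable}). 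The case $n=0$ is the degenerate ``empty prefix'' ($g=\Id$, $x^\ast=\infty$), for which $H_{a_0}$ has coefficients $P_{a_0+1}(\lambda)$, $P_{a_0}(\lambda)$, $P_{a_0+2}(\lambda)$, $P_{a_0+1}(\lambda)$, all nonnegative by Lemma~\ref{Lemma:signOfPi} since $a_0<i_\lambda$ forces $a_0+2\le i_\lambda+1$; so I may treat $n\ge1$ and $n=0$ uniformly.

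The decisive step is the inclusion $h_{a_n}([0,\infty[)\subset[0,x^\ast[$. Indeed, by the characterization of $\infty_n$ in Lemma~\ref{Lemma:deltadecroissant}, $\infty_n$ is the largest index $i$ with $x^\ast>h_i(0)=m_i^\lambda$, so $x^\ast>m_{\infty_n}^\lambda$; and since $a_n<\infty_n$ we have $a_n+1\le\infty_n$, whence $m_{a_n+1}^\lambda\le m_{\infty_n}^\lambda<x^\ast$. Therefore the whole image $h_{a_n}([0,\infty[)=I_{a_n}^\lambda=[m_{a_n}^\lambda,m_{a_n+1}^\lambda[$ lies inside $[0,x^\ast[$, the interval on which $g$ is defined, increasing and nonnegative. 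Consequently $f\egdef g\circ h_{a_n}=h_{a_0}\circ\cdots\circ h_{a_n}$ has no pole on $[0,\infty[$, is increasing there, and maps $[0,\infty[$ onto the bounded interval $[g(m_{a_n}^\lambda),g(m_{a_n+1}^\lambda)[\subset[0,\infty[$, with $\lim_{x\to\infty}f(x)=g(m_{a_n+1}^\lambda)$ finite and nonnegative.

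It remains to convert these analytic facts into signs of the entries of $M\egdef H_{a_0}\cdots H_{a_n}$, which satisfies $\det M=1$. Lemma~\ref{Lemma:coeffMatrice} already supplies $M_{12}\ge0$ and $M_{22}>0$, so only $M_{11}$ and $M_{21}$ remain. Since $f$ is bounded on $[0,\infty[$, its pole $-M_{22}/M_{21}$ cannot lie in $[0,\infty[$, which together with $M_{22}>0$ forces $M_{21}\ge0$. Finally the finite nonnegative limit $\lim_{x\to\infty}f(x)=M_{11}/M_{21}\ge0$ yields $M_{11}\ge0$ when $M_{21}>0$, while when $M_{21}=0$ the relation $M_{11}M_{22}=\det M=1$ together with $M_{22}>0$ gives $M_{11}>0$. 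This proves the first assertion. For the second, we already know $I_{a_0\dots a_n}^\lambda$ is of the form $h_{a_0}\circ\cdots\circ h_{a_n}([0,r[)=f([0,r[)$; as $f$ has no pole and is increasing on all of $[0,\infty[$, necessarily $r=\infty$ and $I_{a_0\dots a_n}^\lambda=f([0,\infty[)=h_{a_0}\circ\cdots\circ h_{a_n}([0,\infty[)$.

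I expect the main obstacle to be precisely the inclusion $I_{a_n}^\lambda\subset[0,x^\ast[$, since this is the only place the standard-block condition $a_n<\infty_n$ enters, via the maximality of $\infty_n$ in Lemma~\ref{Lemma:deltadecroissant}; one must also stay attentive to the two degenerate configurations, $\gamma_{n-1}=0$ (so $x^\ast=\infty$ and $g$ is affine) and $M_{21}=0$ (so $f$ is affine), in which the boundedness and limit arguments must be replaced by the determinant relation.
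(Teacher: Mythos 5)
Your proof is correct and follows essentially the same route as the paper's: the decisive point in both is that, since $a_n<\infty_n$, the interval $I_{a_n}^\lambda$ lies entirely to the left of the pole $-\delta_{n-1}/\gamma_{n-1}$ of the upper branch $h_{\infty_0}\circ\cdots\circ h_{\infty_{n-1}}$, so the full composition is bounded and increasing on $[0,\infty[$, after which the signs of the matrix entries follow from Lemma~\ref{Lemma:coeffMatrice}, boundedness and the limit at infinity (your handling of the degenerate case $M_{21}=0$ is in fact more careful than the paper's). The only loose point is the final sentence: ``$f$ has no pole, hence $r=\infty$'' is not by itself a valid inference, since $r$ is dictated by the coding constraints rather than by poles of $f$; the clean justification is that $T_\lambda^n$ maps $I_{\infty_0\dots\infty_{n-1}}^\lambda$ onto $[0,x^\ast[\;\supset I_{a_n}^\lambda$, so that $I_{a_0\dots a_n}^\lambda=g\bigl(I_{a_n}^\lambda\bigr)=f([0,\infty[)$ --- which is exactly the inclusion you already established.
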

\begin{proof}
We consider the homographic function $h_{a_0}\circ\cdots\circ h_{a_{n-1}}=h_{\infty_0}\circ\cdots\circ h_{\infty_{n-1}}$ which is the upper branch at order $n$. 
Since its pole lies in $I_{\infty_n}^\lambda$, this function is bounded on $I_{a_n}^\lambda$, which means that $h_{a_0}\circ\cdots\circ h_{a_{n}}$ is bounded on $[0, \infty[$.
Moreover, since $a_n<\infty_n\le i_\lambda$,
$$I_{a_0,\dots, a_n}^\lambda= h_{\infty_0}\circ\cdots\circ h_{\infty_{n-1}}(I_{a_n}^\lambda) 
= h_{\infty_0}\circ\cdots\circ h_{\infty_{n-1}}\bigl(h_{a_n}([0,\infty[)\bigr) .$$
In particular, $I_{a_0,\dots, a_n}^\lambda\not=\emptyset$, thus, by Lemma~\ref{Lemma:coeffMatrice}, we know that 
$H_{a_0}\cdots H_{a_n}$ is of the form 
$\begin{pmatrix}
\alpha & \beta\\
\gamma & \delta
\end{pmatrix}$
where $\beta\ge0$ and $\delta >0$. 
Since $\frac{\alpha x+ \beta}{\gamma x+ \delta}$ is bounded on $[0, \infty[$, we get $\gamma\ge0$. 
Moreover $$\frac{\alpha}{\gamma}=\lim_{x\to\infty}\uparrow \frac{\alpha x+ \beta}{\gamma x+ \delta} >0,$$ hence $\alpha>0$. 
\end{proof}

We are now ready to achieve the proof of Theorem~\ref{Thm:distinct_points}.
Consider $n$ such that $x_0\dots x_n$ is a concatenation of standard blocks. 
By the previous Lemma, the matrix $H_{x_0}\cdots H_{x_n}$ is of the form 
$$H_{x_0}\cdots H_{x_n}=
\begin{pmatrix}
\alpha & \beta\\
\gamma & \delta
\end{pmatrix},$$
where $\alpha\delta-\gamma\beta=1$, all the coefficients are nonnegative, and 
$$I_{x_0\dots x_n}^\lambda=h_{x_0}\circ\cdots\circ h_{x_{n}}([0, \infty[)=\left[\frac{\beta}{\delta}, \frac{\alpha}{\gamma}\right[.$$
Observe that the length of $I_{x_0\dots x_n}^\lambda$ is $1/\gamma\delta$. Our purpose is to prove that $\gamma\delta$ is large if $n$ is large enough.

By Lemma~\ref{Lemma:uniform}, we have $1=\gamma u_n(z) + \delta v_n(z)$ for any $z\in I_{x_0\dots x_n}^\lambda$.

Choose $z=\beta/\delta$, so that $u_n(z)=0$. Then $\delta=1/v_n(z)\ge 1/\rho_n$.

Choose now $z_j=\alpha/\gamma - \epsilon_j$ with $\epsilon_j\to0$. By compacity, we can assume that $(\epsilon_j)$ is chosen such that $u_n(z_j)$ and $v_n(z_j)$ converge, respectively to $u_n$ and $v_n$. 
Since $\lim_{j\to\infty}u_n(z_j)/v_n(z_j)=\infty$, we have $v_n=0$.
Therefore, $\gamma=1/u_n\ge 1/\rho_n$.
This concludes the proof of Theorem~\ref{Thm:distinct_points}.
$\hfill\square$

\subsection{Convergence of $\lambda$-continued fractions}
\label{Sec:continued fractions}
The above results can be interpreted in terms of convergence of $\lambda$-continued fractions. 
Starting from $x\in [0,\infty[$, we define, via the coding of its orbit $\omega_\lambda(x)=x_0 x_1\ldots$, a sequence of real numbers having finite expansions in $\lambda$-continued fractions:
For any $j\ge0$, we consider the left endpoint $m_{x_0 \ldots x_n}^\lambda\egdef h_{x_0}\circ h_{x_1}\circ \cdots \circ h_{x_n}(0)$ of the interval $I_{x_0, \dots, x_n}^\lambda$. We can recursively construct a finite expansion in $\lambda$-continued fractions of these endpoints by observing that, if $y=[0,b_1,\dots, b_\ell]_{\lambda}$, then 
$$
h_i(y)=[0,\underbrace{1,-1, \dots,(-1)^{i-1}}_{i\mbox{ terms}}, (-1)^i(1+b_1), (-1)^ib_2, \dots, (-1)^ib_\ell]_{\lambda}\ .
$$
If $y=0$, and its expansion is $[0]_\lambda$, the preceding formula is to be understood as $h_i(0)=[0,1,-1, \dots,(-1)^{i-1}]_{\lambda}$.

We obtain in this way finite expansions in $\lambda$-continued fractions, where the first coefficient is zero, the second one is positive, and the signs of the following coefficients alternate.
It will therefore be useful to introduce the following notation which corresponds to this particular form of $\lambda$-continued fractions: For positive integers $(b_i)_{1\le i\le \ell}$, we set
$$
\brg b_1, b_2, \ldots , b_\ell \brd_\lambda 
\egdef [0,b_1, -b_2, \ldots, (-1)^{\ell-1}b_\ell]_\lambda 
= \cfrac{1}{b_1\lambda - \cfrac{1}{\ddots-\cfrac{1}{b_\ell \lambda }}}
$$
With our new notation, we have 
\begin{equation}
\label{Eq:h_i}
 h_i \bigl(\brg b_1, b_2, \ldots , b_\ell \brd_\lambda \bigr)
=\brg\ \underbrace{1,\dots,1}_{i\mbox{ terms}}, (1+b_1), b_2, \dots, b_\ell\ \brd_{\lambda}\ .
\end{equation}

\begin{lemma}
\label{Lemma:left endpoint}
Consider the prefix $x_0\dots x_n$ of the orbit $\omega_\lambda(x)$ and write this finite sequence as $0^{e_0}a_00^{e_1}a_1\dots 0^{e_\ell}a_\ell 0^{e_{\ell+1}}$, where $a_i>0$ and $e_i\ge0$.
Then the left endpoint $m_{x_0 \ldots x_n}^\lambda$ of $I_{x_0\dots x_n}^\lambda$ satisfies
$$
m_{x_0 \ldots x_n}^\lambda = \brg \ \underbrace{e_0+1, 1, \ldots, 1}_{a_0\mbox{ terms}},\  \underbrace{e_1+2, 1, \ldots, 1}_{a_1\mbox{ terms}},\ \dots ,\  \underbrace{e_\ell +2, 1, \ldots, 1}_{a_\ell\mbox{ terms}}\ \brd_\lambda \ .
$$
\end{lemma}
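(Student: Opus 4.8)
The plan is to establish the formula as a purely algebraic identity about the composition $m_{x_0\ldots x_n}^\lambda=h_{x_0}\circ\cdots\circ h_{x_n}(0)$, by induction on the number $\ell+1$ of nonzero symbols in the prefix, peeling symbols off from the left and repeatedly invoking the composition rule \eqref{Eq:h_i}. The two ingredients I would isolate first are: (i) the degenerate case $h_a(0)=\brg\underbrace{1,\dots,1}_{a}\brd_\lambda$ for $a\ge1$, which is the $y=0$ instance of \eqref{Eq:h_i}; and (ii) the fact that, by \eqref{Eq:h_i}, applying $h_i$ to $\brg b_1,\dots,b_k\brd_\lambda$ prepends $i$ ones and raises $b_1$ to $b_1+1$. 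In particular $h_0$ merely increments the leading coefficient, so $h_0^{e}$ adds $e$ to it. Note that the identity holds for any admissible symbol sequence, so no use of realizability by an orbit is needed.

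First I would observe that the trailing block $0^{e_{\ell+1}}$ is irrelevant: since $h_0(0)=0$ we have $h_0^{e_{\ell+1}}(0)=0$, so it may be discarded (and if the whole prefix is a string of zeros, $m=0$ and there is nothing to prove). For the base case $\ell=0$, the prefix is $0^{e_0}a_0$ and a one-line computation gives
$$h_0^{e_0}\circ h_{a_0}(0)=h_0^{e_0}\Bigl(\brg\underbrace{1,\dots,1}_{a_0}\brd_\lambda\Bigr)=\brg\underbrace{e_0+1,1,\dots,1}_{a_0}\brd_\lambda.$$
For the inductive step I would write the prefix as $0^{e_0}a_0\,w$ with $w=0^{e_1}a_1\cdots0^{e_\ell}a_\ell$, apply the induction hypothesis to $w$ to get
$$\brg\underbrace{e_1+1,1,\dots,1}_{a_1},\underbrace{e_2+2,1,\dots,1}_{a_2},\dots,\underbrace{e_\ell+2,1,\dots,1}_{a_\ell}\brd_\lambda,$$
then apply $h_{a_0}$ (which prepends $a_0$ ones and lifts the leading $e_1+1$ to $e_1+2$) and finally $h_0^{e_0}$ (which lifts the new leading $1$ to $e_0+1$). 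This produces exactly the claimed expression.

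The one point that needs care, and which I would flag as the heart of the argument, is the discrepancy between the $+1$ in the first block and the $+2$ in every later block. The zeros $0^{e_j}$ sitting just before $a_j$ contribute $e_j+1$ to the leading coefficient of the $j$-th block (the extra $+1$ coming from the value produced by $h_{a_j}(0)$, carried through the induction); each block other than the first then receives one additional increment when the preceding $h_{a_{j-1}}$ is prepended in front of it, turning $e_j+1$ into $e_j+2$. The first block has no preceding nonzero symbol and therefore retains $e_0+1$. Tracking this single off-by-one through the induction is the only genuine bookkeeping; everything else is a direct application of \eqref{Eq:h_i}.
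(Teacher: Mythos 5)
Your proposal is correct and follows exactly the route the paper intends: the paper's proof is the one-line remark ``immediate induction using~\eqref{Eq:h_i}'', and your argument is precisely that induction written out, including the correct treatment of the trailing zeros via $h_0(0)=0$ and the off-by-one bookkeeping ($e_0+1$ versus $e_j+2$) coming from the extra increment contributed by each preceding $h_{a_{j-1}}$. Nothing to add.
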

\begin{proof}
 It is an immediate induction using~\eqref{Eq:h_i}.
\end{proof}
Note that the number of coefficients in the expansion of $m_{x_0 \ldots x_n}^\lambda$  obtained in this way is $x_0+\cdots+x_n$, and that the expansion of $m_{x_0\dots x_n}^\lambda$ is a prefix of the expansion of $m_{x_0\dots x_{n+1}}^\lambda$. 

Now, Theorem~\ref{Thm:distinct_points} can be interpreted as the convergence to $x$ of $m_{x_0 \ldots x_n}^\lambda$. 
This allows to write $x$ in the form of an infinite expansion in $\lambda$-continued fraction:
$$
x = \brg \ \underbrace{e_0+1, 1, \ldots, 1}_{a_0\mbox{ terms}},\  \underbrace{e_1+2, 1, \ldots, 1}_{a_1\mbox{ terms}},\ \dots ,\  \underbrace{e_\ell +2, 1, \ldots, 1}_{a_\ell\mbox{ terms}},\ \dots \ \brd_\lambda \ .
$$

Moreover, we can deduce from the proof of Theorem~\ref{Thm:distinct_points} the speed of convergence of $m_{x_0 \ldots x_n}^\lambda$ to $x$, which is essentially given by the combinatorics of the sequence $\omega_\lambda(x)=x_0x_1\ldots x_n\ldots$: If there exists some $n_0$ such that $x_n=0$ for each $n\ge n_0$, then $x= m_{x_0 \ldots x_{n_0}}^\lambda$; Otherwise, the proof of Theorem~\ref{Thm:distinct_points} yields 
$$ \left| x - m_{x_0 \ldots x_{n}}^\lambda \right| \le \left| I_{x_0 \ldots x_{n}}^\lambda \right| \le \rho_n^2, $$
where $\rho_n$ is defined in Lemma~\ref{Lemma:uniform}.

\section{Link with $\beta$-shifts}

\subsection{Conjugacy between $T_\lambda$ and some $\beta$-shift}
\label{Sec:conjugacy}
\subsubsection{Classical results on $\beta$-shifts}
\label{Sec:beta-shifts}
For any $\beta>1$, Renyi~\cite{renyi1957} introduced the $\beta$-expansion of a real number $0\le t<1$ as the series $$
t = \sum_{n\ge 0}\frac{t_{n}}{\beta^{n+1}}\ ,
$$
where the coefficients $t_n$ are nonnegative integers defined as follows. 
We consider the transformation $S_\beta:[0,1[\to[0,1[$, which sends $t$ to $\beta t\mod 1$; 
The natural partition of $[0,1[$ associated to this transformation is composed of intervals of the form 
$J_j^\beta\egdef\left[\frac{j}{\beta}, \frac{j+1}{\beta}\right[$, for $0\le j<\beta-1$, together with 
$J_{\lfloor\beta\rfloor}^\beta\egdef\left[\frac{\lfloor\beta\rfloor}{\beta}, 1\right[$, where $\lfloor\beta\rfloor$ denotes the largest integer smaller than $\beta$;
The coefficients $t_n$ are given by the orbit of $t$ under the transformation $S_\beta$: $t_n=j$ if $S_\beta^{n}(t)\in J_j^\beta$.

\begin{figure}[h]
\input{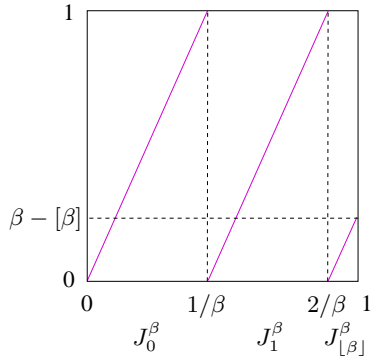}
\caption{Graph of $S_\beta$ for $\beta=9/4$.} 
\label{Fig:beta shift}
\end{figure} 

We denote by $\O_\beta(t)$ the sequence $(t_n)_{n\ge0}$ coding the orbit of $t\in [0, 1[$ under the transformation $S_\beta$. 
As in the case of $T_\lambda$, the map $t\mapsto \O_\beta(t)$ is increasing with respect to the lexicographic order of sequences. 
We denote by $\O_\beta(1)$ the limit, as $t\to 1$, of $\O_\beta(t)$. 
Observe that for any $n\ge0$, $\sigma^n \O_\beta(1)\preceq \O_\beta(1)$.

Parry characterized the sequences which code orbits for $S_\beta$ using the $\beta$-expansion of the fractional part $\beta-[\beta]$ of $\beta$. For our purpose, it is useful to translate his result in terms of $\O_\beta(1)$, which can be done by the following lemma.

\begin{lemma}
 \label{Lemma:orbite1}
If $\beta\in\ZZ_+$, $\O_\beta(1)=(\beta -1)(\beta -1)\dots$.
If $\beta\notin\ZZ_+$, let $(b_i)_{i\ge0}$ be the coding of the orbit of $\beta-[\beta]$.
If $(b_i)_i$ contains infinitely many nonzero terms, then $\O_\beta(1)=[\beta]b_0b_1\dots$. 
Otherwise, $\O_\beta(1)$ is a periodic repetition of the pattern $[\beta]b_0\dots b_{\ell-1}(b_\ell -1)$, where $b_\ell$ is the last nonzero term.
\end{lemma}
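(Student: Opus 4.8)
The plan is to compute $\O_\beta(1)=\lim_{t\to1^-}\O_\beta(t)$ by analysing the orbit of points just below~$1$, reducing everything to the behaviour near the single point $\gamma\egdef\beta-[\beta]$. The starting observation is that for $t$ close to $1$ we have $t\in J_{\lfloor\beta\rfloor}^\beta=[\lfloor\beta\rfloor/\beta,1[$, so the first symbol of $\O_\beta(t)$ is $\lfloor\beta\rfloor$, and on this last cell $S_\beta(t)=\beta t-\lfloor\beta\rfloor$ is an increasing affine bijection onto $[0,\gamma[$ (onto $[0,1[$ when $\beta\in\ZZ_+$). Since $\O_\beta$ is increasing for the lexicographic order and $\sigma\O_\beta(t)=\O_\beta(S_\beta(t))$, letting $t\to1^-$ gives the factorisation $\O_\beta(1)=\lfloor\beta\rfloor\cdot\O_\beta(\gamma^-)$, where $\O_\beta(\gamma^-)\egdef\lim_{s\to\gamma^-}\O_\beta(s)$ (read $\gamma^-$ as $1^-$ in the integer case, and recall $\lfloor\beta\rfloor=[\beta]$ when $\beta\notin\ZZ_+$).

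For $\beta\in\ZZ_+$ this already finishes the argument: the factorisation reads $\O_\beta(1)=(\beta-1)\,\O_\beta(1)$, whose only solution is $\O_\beta(1)=(\beta-1)(\beta-1)\dots$. For $\beta\notin\ZZ_+$ the matter reduces to identifying $\O_\beta(\gamma^-)$ in terms of $\O_\beta(\gamma)=(b_i)$, and I would do this through a left-continuity dichotomy for $\O_\beta$ at an interior point $s\in\,]0,1[$, governed by whether the $S_\beta$-orbit of $s$ reaches~$0$. One first checks that the orbit of $s$ hits $0$ in finite time if and only if $\O_\beta(s)$ ends in $0^\infty$: indeed $S_\beta^k(s)=0$ forces all later symbols to vanish, while conversely an eventual string of zeros keeps the orbit in $[0,1/\beta[$, where $S_\beta$ expands distances, which is impossible unless the orbit is exactly $0$.

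When the orbit of $s$ never meets $0$, the map $\O_\beta$ is left-continuous at $s$: for each $N$ the cell of points sharing the first $N$ symbols of $s$ is a half-open interval, and $s$ can be its left endpoint only if some $S_\beta^k(s)$ with $k<N$ equals a left endpoint $s_k/\beta$ of a partition interval, which (since $S_\beta(j/\beta)=0$) would force the orbit to reach $0$; hence a whole left-neighbourhood of $s$ shares those $N$ symbols and $\O_\beta(s^-)=\O_\beta(s)$. Applied to $s=\gamma$ when $(b_i)$ has infinitely many nonzero terms, this yields $\O_\beta(\gamma^-)=\O_\beta(\gamma)=(b_i)$ and hence $\O_\beta(1)=[\beta]b_0b_1\dots$, as claimed. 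When instead $(b_i)$ is finite with last nonzero term $b_\ell$, the orbit of $\gamma$ reaches $0$ at time $\ell+1$ with $S_\beta^\ell(\gamma)=b_\ell/\beta$; the crux is to track a point $u\uparrow\gamma$, whose first $\ell$ symbols are $b_0\dots b_{\ell-1}$, whose $\ell$-th symbol drops to $b_\ell-1$ because $S_\beta^\ell(u)$ approaches $b_\ell/\beta$ from below and so lies in $J_{b_\ell-1}^\beta$, after which $S_\beta^{\ell+1}(u)\to1^-$. This gives $\O_\beta(\gamma^-)=b_0\dots b_{\ell-1}(b_\ell-1)\,\O_\beta(1)$, and feeding it back into the factorisation produces the self-referential identity $\O_\beta(1)=[\beta]b_0\dots b_{\ell-1}(b_\ell-1)\,\O_\beta(1)$, whose unique solution is the periodic repetition of the block $[\beta]b_0\dots b_{\ell-1}(b_\ell-1)$.

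I expect the main obstacle to be that last computation, namely verifying that the tail of the orbit of $u\uparrow\gamma$ swings up to $1^-$ precisely when the final nonzero symbol is reached, since this is exactly where the self-reference producing periodicity originates; the remaining steps are routine bookkeeping with the increasing, piecewise-affine structure of $S_\beta$.
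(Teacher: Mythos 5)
Your proposal is correct and follows essentially the same route as the paper's proof: write $\O_\beta(1)$ as $\lfloor\beta\rfloor$ followed by the left limit of $\O_\beta$ at $\beta-[\beta]$, then split according to whether the orbit of $\beta-[\beta]$ reaches $0$ (equivalently, whether $(b_i)$ has finitely many nonzero terms), using left-continuity of the coding in one case and the jump $S_\beta^{\ell+1}(t)\to 1^-$ in the other. Your version merely makes explicit a few points the paper leaves as routine (the cylinder-endpoint characterization of left-discontinuities and the self-referential identity yielding periodicity).
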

\begin{proof}
 The case where $\beta$ is an integer is clear. We now assume that $\beta\notin\ZZ$. Then
$$ \lim_{t\uparrow 1}\uparrow S_\beta(t) =\beta-[\beta].$$
 If $(b_i)_i$ contains infinitely many nonzero terms, the orbit of $\beta-[\beta]$ never meets 0, hence $S_\beta^n$ is continuous at $\beta-[\beta]$. We thus have
$$ \lim_{t\uparrow \beta-[\beta]}\uparrow \O_\beta(t) = \O_\beta(\beta-[\beta]).$$
This proves that $\O_\beta(1)=[\beta]b_0b_1\dots$ in this case.

On the other hand, if there exists a smaller integer $\ell\ge0$ such that $b_n=0$ for all $n>\ell$, then $S_\beta^{\ell+1}(\beta-[\beta])=0$ and $S_\beta^{\ell}(\beta-[\beta])=b_\ell/\beta$, where $b_\ell\ge1$. Therefore,
$$ \lim_{t\uparrow \beta-[\beta]}\uparrow S_\beta^{\ell+1}(t) = 1,\quad \mbox{and}\quad 
    \lim_{t\uparrow 1}\uparrow S_\beta^{\ell+2}(t) = 1. $$
Then $\O_\beta(1)$ is periodic, with period $\ell+2$, and it is easily checked that the periodic pattern is $[\beta]b_0\dots b_{\ell-1}(b_\ell -1)$.
\end{proof}

\begin{theo}[Parry~\cite{parry1960}, Theorem 3]
\label{Thm:Parry}
A sequence $(t_n)_{n\ge0}$ is the $\beta$-expansion of a real number $t\in [0, 1[$ if and only if 
\begin{equation}
 \label{Eq:o_beta(1)_admissible}
t_n t_{n+1}\dots \prec \O_\beta(1), \quad \forall n\ge0.
\end{equation}
\end{theo}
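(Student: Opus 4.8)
The plan is to prove the two implications separately, the forward one being nearly immediate and the converse carrying all the weight. Throughout, for a sequence $s=(s_n)$ of nonnegative integers write $v(s)\egdef\sum_{n\ge0}s_n/\beta^{n+1}$, and call $s$ \emph{admissible} if $\sigma^n s\prec\O_\beta(1)$ for every $n\ge0$; note that admissibility is shift-invariant. Two facts about $w\egdef\O_\beta(1)$ will be used repeatedly: first, $v(w)=1$; second, $w$ has infinitely many nonzero terms. The first follows from $v(\O_\beta(t))=t$ for $t\in[0,1[$ together with the observation that $\O_\beta(t)\uparrow w$ forces the first-disagreement index between $\O_\beta(t)$ and $w$ to tend to infinity as $t\uparrow1$, so $t=v(\O_\beta(t))\to v(w)$. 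The second is read off Lemma~\ref{Lemma:orbite1}: in every case the expansion of $1$ repeats (or begins with) a block starting with the digit $\lfloor\beta\rfloor\ge1$, or equals $(\beta-1)(\beta-1)\dots$ when $\beta\in\ZZ$. For the forward implication, let $t\in[0,1[$ with $\O_\beta(t)=(t_n)$. For each $n$, $\sigma^n(t_n)=\O_\beta(S_\beta^n t)$ with $S_\beta^n t\in[0,1[$; since $t\mapsto\O_\beta(t)$ is lexicographically increasing with supremum $w$, we get $\O_\beta(S_\beta^n t)\preceq w$, and equality is impossible because it would give $S_\beta^n t=v(\O_\beta(S_\beta^n t))=v(w)=1$. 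Hence $\sigma^n(t_n)\prec w$, which is \eqref{Eq:o_beta(1)_admissible}.

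For the converse, assume $(t_n)$ is admissible. Comparing first digits with $w$ forces $t_n\le w_0$, the largest digit of the alphabet of $S_\beta$, so the $t_n$ range over the admissible digits. Set $t\egdef v((t_n))$ and, for each $n$, $r_n\egdef v(\sigma^n(t_n))$, so that $r_n\ge0$ and $\beta r_n=t_n+r_{n+1}$. I claim it suffices to show $r_n<1$ for every $n$. Indeed, granting this, $r_n\in[0,1[$ and $\beta r_n=t_n+r_{n+1}$ with $t_n\in\ZZ_+$ and $r_{n+1}\in[0,1[$ put $r_n\in J_{t_n}^\beta$ with $S_\beta(r_n)=r_{n+1}$; by induction $S_\beta^n t=r_n\in J_{t_n}^\beta$, i.e. $t\in[0,1[$ and $\O_\beta(t)=(t_n)$. (When $t_n$ is the top digit, the interval $J_{t_n}^\beta$ ends at $1$, and $r_n<1$ is exactly the constraint $r_{n+1}<\beta-\lfloor\beta\rfloor$ that makes $r_n$ land in it.) Since every shift $\sigma^n(t_n)$ is again admissible, the whole converse reduces to the single statement: \emph{every admissible $s$ satisfies $v(s)<1$.}

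To prove this key lemma, let $M\egdef\sup\{v(s):s\text{ admissible}\}$, which is finite since $s_n\le w_0<\beta$ gives $v(s)\le w_0/(\beta-1)$. Fix an admissible $s$ and let $m$ be the first index with $s_m<w_m$ (it exists since $s\prec w$). Splitting the series at $m$ and using $s_m\le w_m-1$, $v(\sigma^{m+1}s)\le M$, and $v(w)=1$,
\[
v(s)\le\sum_{j\le m}\frac{w_j}{\beta^{j+1}}+\frac{M-1}{\beta^{m+1}}=1-\frac{v(\sigma^{m+1}w)}{\beta^{m+1}}+\frac{M-1}{\beta^{m+1}}.
\]
If $M\ge1$, dropping the nonpositive middle term and using $\beta^{m+1}\ge\beta$ yields $v(s)\le1+(M-1)/\beta$; taking the supremum over $s$ gives $M\le1+(M-1)/\beta$, hence $M\le1$. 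Thus $M\le1$ in all cases. Feeding $M\le1$ back into the same display,
\[
v(s)\le\sum_{j\le m}\frac{w_j}{\beta^{j+1}}=1-\frac{v(\sigma^{m+1}w)}{\beta^{m+1}}<1,
\]
the strict inequality coming from $v(\sigma^{m+1}w)>0$, which holds precisely because $w$ has infinitely many nonzero terms. This proves the lemma, and with it the converse.

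The main obstacle is exactly this last inequality $v(s)<1$: lexicographic order does not in general control the real value $v$ when digits may be as large as $\lfloor\beta\rfloor$, so one cannot argue by a naive term-by-term comparison of tails. The supremum bootstrap on $M$ is what circumvents this difficulty, and the \emph{strictness} — the delicate point that distinguishes genuine $\beta$-expansions from their boundary case $\O_\beta(1)$ — rests on the structural input of Lemma~\ref{Lemma:orbite1}, which guarantees that $\O_\beta(1)$ never ends in zeros.
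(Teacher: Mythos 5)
The paper does not prove this statement: it is imported verbatim from Parry's 1960 article (Theorem~3 there), merely restated in terms of $\O_\beta(1)$ via Lemma~\ref{Lemma:orbite1}, so there is no internal argument to compare yours against. Judged on its own, your proof is correct. The forward implication is handled as expected (shift the orbit, use monotonicity of $t\mapsto\O_\beta(t)$, and exclude equality with $\O_\beta(1)$ via $v(\O_\beta(1))=1$). The substance is in the converse, and your reduction is the right one: once every admissible tail has value $<1$, the recursion $\beta r_n=t_n+r_{n+1}$ identifies $r_n$ with $S_\beta^n t$ and places it in $J_{t_n}^\beta$ digit by digit. The key lemma $v(s)<1$ is exactly where a naive term-by-term comparison of tails breaks down, and your bootstrap on $M=\sup v$ --- a self-improving estimate at the first disagreement index giving $M\le 1$, then reinjection of $M\le1$ to extract strictness from $v\bigl(\sigma^{m+1}\O_\beta(1)\bigr)>0$ --- delivers both the weak and the strict inequality cleanly. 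The strictness correctly rests on $\O_\beta(1)$ having infinitely many nonzero digits, which you take from Lemma~\ref{Lemma:orbite1}; that lemma is proved in the paper independently of the present theorem, so there is no circularity. The only steps left implicit are standard one-line verifications: $v(\O_\beta(t))=t$ for $t\in[0,1[$ (iterate $t=t_0/\beta+S_\beta(t)/\beta$ and use $S_\beta^N t\in[0,1[$), and the fact that the coordinatewise limit defining $\O_\beta(1)$ forces the first-disagreement index to tend to infinity, whence $v(\O_\beta(1))=1$. Neither affects correctness.
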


\begin{remark}
\label{rmk:nonperiodic}
Observe that $\O_\beta(1)$ cannot be periodic if the orbit of $\beta-[\beta]$ never meets 0. Indeed, if it were, we would have some $n\ge0$ such that
$$ \sigma^n \O_\beta(\beta-[\beta]) = \O_\beta(1), $$
which would contradict the theorem.
\end{remark}

\subsubsection{Characterization of orbits of $T_\lambda$}
As in the case of $\beta$-expansions, we can characterize the sequences which code orbits for $T_\lambda$.
\begin{theo}
\label{Thm:characterization}
A sequence $(x_n)_{n\ge0}$ codes the orbit of a real number $x$ under the action of $T_\lambda$ if and only if 
\begin{equation}
 \label{Eq:omega_lambda(infty)_admissible}
x_n x_{n+1}\dots \prec \omega_\lambda(\infty), \quad \forall n\ge0.
\end{equation}
\end{theo}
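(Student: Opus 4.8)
The plan is to prove both directions of the equivalence, the necessity being essentially already established and the sufficiency requiring a careful construction.

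For the necessity direction, suppose $(x_n)=\omega_\lambda(x)$ codes the orbit of some $x\in[0,\infty[$. Then for each $n\ge0$, the shifted sequence $x_nx_{n+1}\dots=\sigma^n\omega_\lambda(x)=\omega_\lambda(T_\lambda^n x)$ is itself the coding of the point $T_\lambda^n x\in[0,\infty[$. By Lemma~\ref{lemma:lexico} we immediately get $\sigma^n\omega_\lambda(x)\preceq\omega_\lambda(\infty)$. To upgrade $\preceq$ to the strict $\prec$ required in~\eqref{Eq:omega_lambda(infty)_admissible}, I would invoke Proposition~\ref{Prop:infinityUnreachable}, which asserts that $\omega_\lambda(y)\prec\omega_\lambda(\infty)$ for every \emph{finite} $y\in[0,\infty[$. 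Since $T_\lambda^n x$ is finite, this yields $\sigma^n\omega_\lambda(x)=\omega_\lambda(T_\lambda^n x)\prec\omega_\lambda(\infty)$, which is exactly the stated condition. This direction is short because the hard analytic work (infinity is unreachable) was done in Section~\ref{Sec:distinct codings}.

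For the sufficiency direction, I start from an abstract sequence $(x_n)$ satisfying $\sigma^n(x_n x_{n+1}\dots)\prec\omega_\lambda(\infty)$ for all $n$, and must produce a genuine point $x\in[0,\infty[$ with $\omega_\lambda(x)=(x_n)$. The natural candidate is the nested intersection $\bigcap_n \overline{I_{x_0\dots x_n}^\lambda}$. The key steps are: first, show that each $I_{x_0\dots x_n}^\lambda$ is nonempty, so that the closures form a decreasing sequence of nonempty compact (or at least closed) sets with a common point; second, show the intersection reduces to a single point $x$; third, verify $\omega_\lambda(x)=(x_n)$. Nonemptiness is where the admissibility condition~\eqref{Eq:omega_lambda(infty)_admissible} enters: the condition $x_n x_{n+1}\dots\prec\omega_\lambda(\infty)$ should guarantee inductively that appending $x_{n+1}$ to an allowed prefix $x_0\dots x_n$ keeps the cylinder nonempty, because the rightmost admissible continuation at each stage is governed precisely by $\omega_\lambda(\infty)$. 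For the single-point property and boundedness, I would reuse the machinery of Lemma~\ref{Lemma:uniform} and the standard-block decomposition: the admissibility condition forces $\omega_\lambda(x)\prec\omega_\lambda(\infty)$, so the sequence does not terminate in infinitely many $i_\lambda$'s, and together with the trichotomy on $\infty$-blocks this makes the diameters $\rho_n^2$ shrink to $0$.

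The main obstacle I anticipate is establishing nonemptiness of the cylinders $I_{x_0\dots x_n}^\lambda$ from the lexicographic admissibility condition, i.e.\ translating the symbolic condition~\eqref{Eq:omega_lambda(infty)_admissible} into the geometric statement that the corresponding homographic image is a genuine nonempty subinterval of $[0,\infty[$. The delicate point is handling the boundary case where a prefix equals a prefix of $\omega_\lambda(\infty)$ (so we are on the rightmost branch), where one must check that the \emph{next} symbol $x_{n+1}$, constrained to satisfy $x_{n+1}x_{n+2}\dots\prec\sigma^{(\cdot)}\omega_\lambda(\infty)$, indeed lands in the range of the upper branch $h_{\infty_0}\circ\cdots\circ h_{\infty_n}$; here the analysis of the pole location carried out in Lemma~\ref{Lemma:deltadecroissant} and the convexity/slope estimates from the proof of Proposition~\ref{Prop:infinityUnreachable} are the right tools. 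Once nonemptiness is secured, identifying the intersection as a single point and confirming its coding should follow routinely from Theorem~\ref{Thm:distinct_points} and the continuity of the coding on cylinders.
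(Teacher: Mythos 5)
Your necessity argument is exactly the paper's: $\sigma^n\omega_\lambda(x)=\omega_\lambda(T_\lambda^nx)$ with $T_\lambda^nx$ finite, so Proposition~\ref{Prop:infinityUnreachable} gives the strict inequality. For sufficiency your skeleton (standard-block decomposition, Lemma~\ref{Lemma:matrice associee bloc standard}, nested cylinders) is also the paper's, but you have misplaced the difficulty, and the step you dismiss as routine is the one that can fail. Nonemptiness of each finite cylinder is the easy part: once the admissibility condition lets you write $(x_n)$ as a concatenation of standard blocks $B_0B_1\dots$, Lemma~\ref{Lemma:matrice associee bloc standard} gives $I_{B_0\dots B_k}^\lambda=h_{B_0}\circ\cdots\circ h_{B_k}([0,\infty[)$, a nonempty half-open interval $[u_k,v_k[$; no pole analysis or convexity estimate is needed. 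The genuine issue is that a decreasing sequence of nonempty half-open intervals can have empty intersection, and passing to closures does not repair this: the point singled out by $\bigcap_k[u_k,v_k]$ could a priori be a right endpoint $v$ with $v=v_k$ for all large $k$, in which case $v\notin[u_k,v_k[$, no point has coding $(x_n)$, and your step ``verify $\omega_\lambda(x)=(x_n)$'' fails --- the coding map is precisely discontinuous from the left at such endpoints, so ``continuity of the coding on cylinders'' is the wrong tool there. Your proposal contains no argument excluding this scenario.

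The paper closes this hole with one cheap observation that your proposal lacks: since $I_{B_0\dots B_k}^\lambda=h_{B_0}\circ\cdots\circ h_{B_k}([0,\infty[)$, the right endpoint is $v_k=h_{B_0}\circ\cdots\circ h_{B_k}(\infty)$, and $v_{k+1}=h_{B_0}\circ\cdots\circ h_{B_k}\bigl(h_{B_{k+1}}(\infty)\bigr)<v_k$ because $h_{B_{k+1}}$ is bounded on $[0,\infty[$ (again Lemma~\ref{Lemma:matrice associee bloc standard}) and the outer map is increasing. Strict decrease of the $v_k$ gives $\bigcap_k[u_k,v_k[=\bigcap_k[u_k,v_k]\neq\emptyset$, and any point of this intersection has coding $(x_n)$ by the very definition of the cylinders; no single-point property, no diameter estimate, and no appeal to Lemma~\ref{Lemma:uniform} is needed. (Your diameter route has the additional snag that Lemma~\ref{Lemma:uniform} is proved for the coding of an actual point, and its estimate $\rho_n\to0$ breaks down for admissible sequences ending in infinitely many zeros, which would then need the separate treatment of Lemma~\ref{lemma:zero}.) So: right framework and right lemmas, but the decisive step --- why the half-open cylinders have a common point --- is missing.
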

\begin{proof}
The condition is necessary by Proposition~\ref{Prop:infinityUnreachable}. 
Conversely, consider a sequence $(x_n)_{n\ge0}$ satisfying $x_n x_{n+1}\dots \prec \omega_\lambda(\infty)$ for any $n\ge0$. 
This means that this sequence can be decomposed into a concatenation of standard blocks $x_0x_1\dots=B_0B_1\dots$. 
For any standard block $B=b_0\dots b_\ell$, let $h_{B}\egdef h_{b_0}\circ \cdots \circ h_{b_\ell}$.
By Lemma~\ref{Lemma:matrice associee bloc standard}, for each $k\ge0$, the interval $[u_k, v_k[\egdef I_{B_0\dots B_k}^\lambda$ is equal to $h_{B_0}\circ \dots h_{B_k}([0,\infty[)$. 
Therefore, $v_k=h_{B_0}\circ \dots h_{B_k}(\infty)$, and 
$v_{k+1}=h_{B_0}\circ \dots h_{B_k}(h_{B_{k+1}}(\infty))$. 
Since $h_{B_{k+1}}(\infty)<\infty$, we get that $v_{k+1}<v_k$. 
Hence 
$$
\bigcap_n I_{x_0\dots x_n}^\lambda=\bigcap_k I_{B_0\dots B_k}^\lambda = \bigcap_k [u_k, v_k[ = \bigcap_k [u_k, v_k] \neq \emptyset .
$$
\end{proof}

Since the sets of sequences coding orbits for $T_\lambda$ and for $S_\beta$ have the same structure, we are led to find a correspondence between $\lambda$ and $\beta$: In view of the statements of Theorems~\ref{Thm:Parry} and~\ref{Thm:characterization}, given $\lambda\in]0,2[$, we want to find $\beta>1$ such that $\O_\beta(1)=\omega_\lambda(\infty)$.

\subsubsection{How to find $\beta(\lambda)$}
The main tool to find $\beta$ is the following result by Parry.
\begin{prop}[Parry~\cite{parry1960}, Corollary 1]
Let $b$ be a positive integer and let $(b_i)_{i\ge0}$ be a sequence of nonnegative integers. 
There exists $\beta>1$ with integer part $b$ such that the $\beta$-expansion of $(\beta-[\beta])$ is coded by $(b_i)_{i\ge0}$ if and only if 
$$
b_nb_{n+1}\dots \prec b\,b_0b_1\dots, \mbox{ for all } n\ge0.
$$
\end{prop}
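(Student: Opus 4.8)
The statement to prove is Parry's Corollary~1: given a positive integer $b$ and a sequence $(b_i)_{i\ge0}$ of nonnegative integers, there exists $\beta>1$ with $[\beta]=b$ whose fractional part $\beta-[\beta]$ has $\beta$-expansion coded by $(b_i)$ if and only if the self-admissibility condition $b_nb_{n+1}\dots\prec b\,b_0b_1\dots$ holds for all $n\ge0$.

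The plan is to build the correspondence through the series representation of the fractional part. If such a $\beta$ exists, then writing $w\egdef b\,b_0b_1\dots=\O_\beta(1)$, the admissibility condition is exactly the shift-invariance inequality $\sigma^n\O_\beta(1)\preceq\O_\beta(1)$ already recorded in Section~\ref{Sec:beta-shifts}, combined with the observation (Remark~\ref{rmk:nonperiodic}) that when the orbit of $\beta-[\beta]$ never hits $0$ the inequality is strict; the necessity direction thus follows by quoting Parry's characterization (Theorem~\ref{Thm:Parry}) applied to $t=\beta-[\beta]$ together with Lemma~\ref{Lemma:orbite1}, which identifies $\O_\beta(1)$ precisely as $[\beta]b_0b_1\dots$. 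First I would dispatch necessity this way, handling separately the terminating case where $(b_i)$ has finitely many nonzero terms, for which Lemma~\ref{Lemma:orbite1} gives the periodic form of $\O_\beta(1)$ and the inequality must be read accordingly.

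For sufficiency, the idea is to reconstruct $\beta$ directly from the data. Assuming $b_nb_{n+1}\dots\prec b\,b_0b_1\dots$ for all $n$, I would define $\beta$ as the unique real root greater than $1$ of the equation obtained by demanding that the number $t\egdef\sum_{n\ge0}b_n/\beta^{n+1}$ equal $\beta-b$, i.e. $\beta=b+\sum_{n\ge0}b_n\beta^{-(n+1)}$. Equivalently, set $f(s)\egdef s-b-\sum_{n\ge0}b_ns^{-(n+1)}$ for $s>1$ and locate its zero. The admissibility hypothesis bounds the $b_i$ (each $b_i\le b$, and not all equal to $b$ cofinally), which guarantees the series converges for $s$ in a suitable range and that $f$ is continuous and eventually positive, so a root $\beta>1$ exists; monotonicity of $f$ in $s$ yields uniqueness. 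One then checks $b\le\beta<b+1$, so $[\beta]=b$, and that the coefficients $b_n$ produced by the greedy algorithm $S_\beta$ starting from $\beta-[\beta]$ coincide with the given $(b_i)$ — this is where the strict inequality $b_nb_{n+1}\dots\prec b\,b_0\dots=\O_\beta(1)$ is used via Theorem~\ref{Thm:Parry} to certify that $(b_i)$ is a genuine $\beta$-expansion, hence equals the greedy one.

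The main obstacle I anticipate is the sufficiency direction, specifically verifying that the $\beta$ constructed as the root of $f$ actually reproduces $(b_i)$ as the coding of the orbit of $\beta-[\beta]$, rather than some other admissible sequence. The subtlety is that the defining equation only pins down the \emph{value} $t=\beta-[\beta]$, and one must argue that the given sequence is the canonical (greedy) expansion of that value. This is precisely resolved by invoking Theorem~\ref{Thm:Parry}: the hypothesis $b_nb_{n+1}\dots\prec\O_\beta(1)$ for all $n$ is exactly Parry's admissibility criterion, which forces $(b_i)$ to be \emph{the} $\beta$-expansion of $t$. The terminating/periodic case requires the most care, since there $\O_\beta(1)$ is the periodic pattern from Lemma~\ref{Lemma:orbite1} rather than $[\beta]b_0b_1\dots$ itself, and one must confirm the hypothesis still matches the correct periodic word; I would treat it as a separate short case at the end.
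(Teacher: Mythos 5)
First, a caveat on the comparison itself: the paper gives no proof of this statement --- it is quoted directly from Parry's 1960 article as his Corollary~1 --- so there is no internal argument to measure yours against, and I can only assess your reconstruction on its own terms. Your necessity direction is fine: applying Theorem~\ref{Thm:Parry} to $t=\beta-[\beta]$ gives $b_nb_{n+1}\dots\prec\O_\beta(1)$ for all $n$, and Lemma~\ref{Lemma:orbite1} identifies $\O_\beta(1)$ either as $b\,b_0b_1\dots$ or, in the terminating case, as a periodic word that is itself $\prec b\,b_0b_1\dots$ (the two agree up to the position of $b_\ell$, where the periodic word carries $b_\ell-1$), so the stated inequality follows in both cases by transitivity.

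The sufficiency direction, however, contains a genuine circularity. Defining $\beta$ as the root of $\beta=b+\sum_{n\ge0}b_n\beta^{-(n+1)}$ is the right move, and your existence, uniqueness and $[\beta]=b$ checks go through (apart from the degenerate case $b=1$, $b_n\equiv 0$, where the root is $\beta=1$). But the step where you ``invoke Theorem~\ref{Thm:Parry} via $b_nb_{n+1}\dots\prec b\,b_0b_1\dots=\O_\beta(1)$'' assumes the identity $\O_\beta(1)=b\,b_0b_1\dots$, which by Lemma~\ref{Lemma:orbite1} is \emph{equivalent} to the conclusion that $(b_i)$ codes the orbit of $\beta-[\beta]$. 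Your hypothesis only compares $(b_i)$ lexicographically with the formal word $b\,b_0b_1\dots$; a priori you know nothing about how that word relates to the true $\O_\beta(1)$ of the $\beta$ you just constructed, so Theorem~\ref{Thm:Parry} cannot yet be applied. The missing ingredient --- the actual content of Parry's corollary --- is a direct proof that the lexicographic condition at \emph{every} shift forces all tails $x_m=\sum_{j\ge0}b_{m+j}\beta^{-(j+1)}$ into $[0,1[$: for instance, set $X=\sup_m x_m$ (finite since $b_m\le b$), compare $x_m$ with $1=\sum_{j\ge0}w_j\beta^{-(j+1)}$ (where $w=b\,b_0b_1\dots$) at the first index where the digits differ to obtain $x_m\le 1+\beta^{-1}(X-1)$, deduce $X\le1$, and then upgrade to strict inequality. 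Once $x_m\in[0,1[$ for all $m$, the relation $\beta x_m=b_m+x_{m+1}$ shows by induction that $S_\beta^m(\beta-b)=x_m$ with greedy digit exactly $b_m$, which yields the conclusion with no appeal to Theorem~\ref{Thm:Parry} at all. Without this lemma your argument does not close.
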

It also follows from Parry that $\beta$ is unique whenever it exists.

By Lemma~\ref{lemma:lexico}, $\sigma^n \omega_\lambda(\infty)\preceq \omega_\lambda(\infty)$ for all $n\ge1$. 
If $\omega_\lambda(\infty)=\infty_0\infty_1\dots$ is non periodic, then the previous inequality is strict. Therefore, there exists a unique  $\beta>1$ with integer part $\infty_0$ such that the $\beta$-expansion of $(\beta-[\beta])$ is coded by $(\infty_i)_{i\ge1}$.
By Lemma~\ref{Lemma:zeros} and Lemma~\ref{Lemma:orbite1}, we conclude that $\O_\beta(1)=\omega_\lambda(\infty)$.
It remains to study the case where $\omega_\lambda(\infty)$ is periodic. 
Let $p$ be the smallest integer such that $\omega_\lambda(\infty)$ is a periodic repetition of the pattern $\infty_0\dots\infty_p$. 
We then define $\bar\omega\egdef \infty_0\dots\infty_{p-1}(\infty_p+1)00\dots$. 
We let the reader check that $\sigma^n \bar\omega\prec \bar\omega$ for all $n\ge1$. Therefore, there exists a unique $\beta>1$ with integer part $\infty_0$ ($\infty_0+1$ if $p=0$) such that the $\beta$-expansion of $(\beta-[\beta])$ is coded by $\infty_1\dots\infty_{p-1}(\infty_p+1)00\dots$.
By Lemma~\ref{Lemma:orbite1}, we conclude that $\O_\beta(1)=\omega_\lambda(\infty)$.

Eventually, we proved that for any $\lambda\in ]0,2[$, there exists a unique $\beta>1$ such that $\O_\beta(1)=\omega_\lambda(\infty)$.

\bigskip
By Theorem~\ref{Thm:characterization}, $x\mapsto \omega_\lambda(x)$ is a one-to-one map from $[0,\infty[$ onto the set of integer-valued sequences satisfying~\eqref{Eq:omega_lambda(infty)_admissible}.
In the same way, by Theorem~\ref{Thm:Parry}, $t\mapsto \O_\beta(t)$  is a one-to-one map from $[0,1[$ onto the set of integer-valued sequences satisfying~\eqref{Eq:o_beta(1)_admissible}.
If $\beta$ is such that $\O_\beta(1)=\omega_\lambda(\infty)$, we get by composition a one-to-one map $\phi_\lambda$ sending $x\in[0,\infty[$ to $t\in[0,1[$, where $\O_\beta(t)=\omega_\lambda(x)$.
The following diagram commutes:

\begin{figure}[h]
\input{diagram.pstex_t}
\caption{} 
\label{Fig:diagram}
\end{figure}

Moreover, $\phi_\lambda$ is increasing, and since it is onto, $\phi_\lambda$ is continuous.

\begin{theo}
 \label{Thm:correspondence}
For any $\lambda\in ]0,2[$, there exists a unique $\beta=\beta(\lambda)>1$ and a homeomorphism $\phi_\lambda:[0,\infty[\to[0,1[$ conjugating $\bigl([0, \infty[, T_\lambda\bigr)$ and $\bigl([0, 1[,S_\beta\bigr)$.
\end{theo}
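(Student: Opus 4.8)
The plan is to assemble Theorem~\ref{Thm:correspondence} directly from the ingredients developed just before its statement, since almost all the work has already been done. First I would establish the existence and uniqueness of $\beta=\beta(\lambda)$: this is precisely the content of the paragraph preceding the theorem, where the Parry corollary is invoked in the two cases (whether $\omega_\lambda(\infty)$ is periodic or not) to produce a unique $\beta>1$ with $\O_\beta(1)=\omega_\lambda(\infty)$. The key combinatorial input here is Lemma~\ref{lemma:lexico}, which guarantees $\sigma^n\omega_\lambda(\infty)\preceq\omega_\lambda(\infty)$, together with Lemma~\ref{Lemma:zeros} (infinitely many nonzero terms) and Lemma~\ref{Lemma:orbite1} (translating Parry's normalization of $\beta-[\beta]$ into a statement about $\O_\beta(1)$). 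I would collect these into the single assertion that the admissibility condition~\eqref{Eq:omega_lambda(infty)_admissible} for $T_\lambda$ coincides exactly with Parry's condition~\eqref{Eq:o_beta(1)_admissible} for $S_\beta$.

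Next I would construct the conjugating map $\phi_\lambda$. By Theorem~\ref{Thm:characterization}, the coding map $x\mapsto\omega_\lambda(x)$ is a bijection from $[0,\infty[$ onto the set $\Sigma_\lambda$ of sequences satisfying~\eqref{Eq:omega_lambda(infty)_admissible}; by Theorem~\ref{Thm:Parry}, the map $t\mapsto\O_\beta(t)$ is a bijection from $[0,1[$ onto the set $\Sigma_\beta$ of sequences satisfying~\eqref{Eq:o_beta(1)_admissible}. Since $\O_\beta(1)=\omega_\lambda(\infty)$ forces $\Sigma_\lambda=\Sigma_\beta$, I would define $\phi_\lambda\egdef(\O_\beta)^{-1}\circ\omega_\lambda$, which is by construction a bijection $[0,\infty[\to[0,1[$ characterized by $\O_\beta(\phi_\lambda(x))=\omega_\lambda(x)$. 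The conjugacy relation $\phi_\lambda\circ T_\lambda=S_\beta\circ\phi_\lambda$ then follows immediately from the two intertwining identities $\omega_\lambda(T_\lambda x)=\sigma\omega_\lambda(x)$ and $\O_\beta(S_\beta t)=\sigma\O_\beta(t)$, i.e.\ from the commutative diagram of Figure~\ref{Fig:diagram}: applying $\O_\beta$ to both sides reduces everything to $\sigma\omega_\lambda(x)=\sigma\O_\beta(\phi_\lambda(x))$.

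It remains to upgrade $\phi_\lambda$ from a bijection to a homeomorphism, i.e.\ to prove continuity of $\phi_\lambda$ and of its inverse. Here I would use that both coding maps are strictly increasing with respect to the lexicographic order (Lemma~\ref{lemma:omega croissant} for $T_\lambda$, and the analogous monotonicity of $\O_\beta$ recorded in Section~\ref{Sec:beta-shifts}), so $\phi_\lambda$ is a strictly increasing bijection between intervals. A strictly increasing bijection from one interval onto another is automatically continuous with continuous inverse, hence a homeomorphism; this is the remark already made right before the theorem statement (\emph{``$\phi_\lambda$ is increasing, and since it is onto, $\phi_\lambda$ is continuous''}). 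I would spell this out slightly, noting that surjectivity onto the interval $[0,1[$ rules out jump discontinuities, so monotonicity plus surjectivity yields bicontinuity.

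The only genuinely delicate point, and the one I would flag as the main obstacle, is the verification that the two admissibility sets $\Sigma_\lambda$ and $\Sigma_\beta$ really coincide \emph{on the nose}, including the borderline periodic case. When $\omega_\lambda(\infty)$ is periodic, $\beta-[\beta]$ has an orbit terminating in zeros, and Lemma~\ref{Lemma:orbite1} shows $\O_\beta(1)$ is a periodic pattern of the shape $\infty_0\dots\infty_{p-1}(\infty_p+1)$ reassembled into $\omega_\lambda(\infty)$; one must check that the strict inequality in~\eqref{Eq:o_beta(1)_admissible} matches~\eqref{Eq:omega_lambda(infty)_admissible} exactly, which is why the auxiliary sequence $\bar\omega$ was introduced and why one verifies $\sigma^n\bar\omega\prec\bar\omega$ for all $n\ge1$. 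Everything else is bookkeeping, but this matching of the two Parry-type characterizations across the periodic/aperiodic dichotomy is where care is required.
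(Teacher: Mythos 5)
Your proposal is correct and follows essentially the same route as the paper: existence and uniqueness of $\beta$ via Parry's corollary applied to $\omega_\lambda(\infty)$ (with the periodic/aperiodic dichotomy handled through the auxiliary sequence $\bar\omega$ and Lemma~\ref{Lemma:orbite1}), then the conjugacy $\phi_\lambda=(\O_\beta)^{-1}\circ\omega_\lambda$ from the matching of the two admissibility conditions in Theorems~\ref{Thm:Parry} and~\ref{Thm:characterization}, and bicontinuity from monotonicity plus surjectivity. The extra care you flag for the periodic borderline case is exactly the point the paper also isolates, so nothing is missing.
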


We observed in~\cite{janvresse2008} some connection, when $\lambda=1$, with Minkowski's question mark function. Translated in the context of the present paper, this connection can be written as follows: 
$$ \phi_1(x) = \begin{cases}
\frac{1}{2} ?(x) & \text{ if } 0\le x\le 1,\\
\frac{1}{2}+\frac{1}{2} ?(1-1/x) & \text{ otherwise. }
               \end{cases}
$$

Since it is well known that the topological entropy of the $\beta$-shift is $\log\beta$ (see \textit{e.g.}~\cite{ito1974}), we get the following corollary of Theorem~\ref{Thm:correspondence}:
\begin{corollary}
 For any $\lambda\in ]0,2[$, the topological entropy of $T_\lambda$ is equal to $\log\beta(\lambda)$, where $\beta(\lambda)$ is defined in Theorem~\ref{Thm:correspondence}.
\end{corollary}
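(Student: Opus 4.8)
The plan is to obtain the corollary from a single principle — topological entropy is a topological conjugacy invariant — combined with the classical computation $h_{\mathrm{top}}(S_\beta)=\log\beta$. By Theorem~\ref{Thm:correspondence}, the homeomorphism $\phi_\lambda:[0,\infty[\to[0,1[$ satisfies $\phi_\lambda\circ T_\lambda = S_\beta\circ\phi_\lambda$ with $\beta=\beta(\lambda)$, so the two systems are genuinely conjugate. If entropy passes through this conjugacy, then $h_{\mathrm{top}}(T_\lambda)=h_{\mathrm{top}}(S_\beta)=\log\beta(\lambda)$, which is exactly the assertion.

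Concretely I would proceed in three steps. First, record that $\phi_\lambda$ is a bijective homeomorphism intertwining $T_\lambda$ and $S_\beta$, so $\bigl([0,\infty[,T_\lambda\bigr)$ and $\bigl([0,1[,S_\beta\bigr)$ are topologically conjugate. Second, invoke the known value of the topological entropy of the $\beta$-shift, $h_{\mathrm{top}}(S_\beta)=\log\beta$ (see e.g.~\cite{ito1974}). Third, combine these via conjugacy invariance to conclude $h_{\mathrm{top}}(T_\lambda)=\log\beta(\lambda)$.

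The only genuine obstacle is that $[0,\infty[$ and $[0,1[$ are \emph{not} compact, and that $T_\lambda$ and $S_\beta$ are merely piecewise homeomorphisms with discontinuities at the endpoints of their defining partitions, so the classical Adler--Konheim--McAndrew invariance theorem does not apply verbatim. I would resolve this by reading the entropy on the symbolic side, where the phase space is compact and the map continuous. By Theorems~\ref{Thm:characterization} and~\ref{Thm:Parry}, the codings $\omega_\lambda(x)$ and $\O_\beta(t)$ range over exactly the same set of admissible sequences, namely those satisfying $x_n x_{n+1}\cdots \prec \omega_\lambda(\infty)=\O_\beta(1)$ for all $n\ge0$; its closure in $\{0,\dots,i_\lambda\}^{\ZZ_+}$ is one and the same subshift $\bigl(X,\sigma\bigr)$. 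Since $x\mapsto\omega_\lambda(x)$ conjugates $T_\lambda$ with $\sigma$ and $t\mapsto\O_\beta(t)$ conjugates $S_\beta$ with $\sigma$, both systems carry the topological entropy of this common subshift, which for the $\beta$-shift equals $\log\beta$; the corollary follows.
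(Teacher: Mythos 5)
Your proposal takes essentially the same route as the paper, which simply combines the conjugacy of Theorem~\ref{Thm:correspondence} with the known fact that the topological entropy of the $\beta$-shift is $\log\beta$ (citing~\cite{ito1974}) and states the corollary without further argument. Your additional discussion of non-compactness and the passage to the common symbolic subshift is a sensible way to make the invariance step rigorous, but it does not change the underlying argument.
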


\subsection{Properties of $\lambda\mapsto\beta(\lambda)$}
\label{Sec:map}

The purpose of this section is to prove the following theorem:
\begin{theo}
\label{Thm:beta(lambda)}
 The map $\lambda\mapsto\beta(\lambda)$ is increasing and continuous from $]0,2[$ onto $]1,\infty[$.
\end{theo}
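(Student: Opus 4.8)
The plan is to establish the three properties---monotonicity, continuity, and surjectivity---separately, using the characterization of $\beta(\lambda)$ through the identity $\O_{\beta(\lambda)}(1)=\omega_\lambda(\infty)$ together with the known correspondence between the lexicographic order on admissible sequences and the value of $\beta$. The crucial fact from Parry's theory is that the map sending a valid sequence $\O_\beta(1)$ to $\beta$ is strictly increasing with respect to the lexicographic order: larger $\O_\beta(1)$ corresponds to larger $\beta$. Hence everything reduces to understanding how $\omega_\lambda(\infty)$ varies with $\lambda$.

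First I would prove monotonicity. The key is to show that $\lambda\mapsto\omega_\lambda(\infty)$ is nondecreasing for the lexicographic order, and in fact that it strictly increases in the appropriate sense so that $\beta(\lambda)$ is strictly increasing. I would analyze how the alphabet size $i_\lambda$ and the quantities $m_i^\lambda=P_i(\lambda)/P_{i+1}(\lambda)$ depend on $\lambda$. By Proposition~\ref{Prop:i_lambda}, $i_\lambda$ is nondecreasing in $\lambda$ (it equals $k-2$ on $]\lambda_{k-1},\lambda_k]$), so as $\lambda$ grows the alphabet only gets larger. Using the geometrical interpretation of $P_i(\lambda)$ via the angle $\theta$ with $\lambda=2\cos\theta$, I would track the first coordinate $\infty_0=i_\lambda$ and then, when $i_\lambda$ is constant, compare the subsequent symbols. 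The monotone dependence of the endpoints $m_i^\lambda$ on $\lambda$ should force $\omega_\lambda(\infty)$ to increase lexicographically, whence $\beta(\lambda)$ increases.

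For surjectivity onto $]1,\infty[$, I would use that $i_\lambda\to\infty$ as $\lambda\to2$ (so $\infty_0=i_\lambda\to\infty$, giving $\beta(\lambda)\to\infty$) and that as $\lambda\to0$ we have $i_\lambda=1$ with $\omega_\lambda(\infty)$ tending to the smallest admissible behavior, forcing $\beta(\lambda)\to1$. Combined with continuity and monotonicity, the intermediate value property then yields that the image is exactly $]1,\infty[$. The boundary limits can be read off from the explicit description of $\omega_\lambda(\infty)$ at the transition points $\lambda=\lambda_k$, where $\beta$ should pass through integer values $k-1$, matching the isomorphism with base $(k-1)$ expansions noted in the introduction.

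The main obstacle, and the part requiring the most care, will be continuity, because $\lambda\mapsto\omega_\lambda(\infty)$ can jump in the product topology precisely at the parameters where $i_\lambda$ increments or where $\omega_\lambda(\infty)$ becomes periodic (the non-analyticity flagged in Corollary~\ref{Cor:non-analytic} originates here). The delicate point is that $\beta(\lambda)$ is defined through a limiting/periodic-completion procedure (via $\bar\omega$ when $\omega_\lambda(\infty)$ is periodic), so a naive lexicographic-continuity argument for $\omega_\lambda(\infty)$ is insufficient at those values. I expect to handle this by showing one-sided continuity of $\omega_\lambda(\infty)$ from each side and matching it with the corresponding one-sided behavior of $\beta$ under Parry's correspondence, checking explicitly that the periodic completion does not create a discontinuity in $\beta$ itself even though it perturbs the tail of the coding sequence. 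Establishing that these one-sided limits agree, using the continuity of the polynomials $P_i$ and of the endpoints $m_i^\lambda$ in $\lambda$, is where the real work lies.
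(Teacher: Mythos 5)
Your plan inverts the logical order of the paper's argument, and the inversion creates a genuine gap. You propose to prove continuity directly and then deduce surjectivity via the intermediate value theorem from the boundary limits $\beta(\lambda)\to1$ and $\beta(\lambda)\to\infty$. The paper does the opposite: it proves surjectivity first, combinatorially, and then observes that an increasing surjection from $]0,2[$ onto the interval $]1,\infty[$ is automatically continuous --- continuity costs nothing once surjectivity is in hand. In your scheme, continuity is load-bearing, and it is precisely the step you defer (``this is where the real work lies'') without identifying the mechanism that makes it true. The difficulty is not merely that $\lambda\mapsto\omega_\lambda(\infty)$ jumps at certain parameters; it is that after a jump the one-sided limits of $\omega_\lambda(\infty)$ could in principle \emph{skip over} a sequence of the form $\O_\beta(1)$, which under Parry's order-preserving correspondence would produce a genuine jump in $\beta$. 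Ruling this out is exactly equivalent to showing that no admissible sequence is skipped, i.e.\ to surjectivity --- so your route is circular unless you supply an independent continuity argument, which you have not.

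The ingredient you are missing is the combinatorial realization result: every finite lexicographically-shift-maximal word is a prefix of $\omega_\lambda(\infty)$ for some $\lambda$ (Proposition~\ref{Prop:lsm_n}), proved by an explicit successor construction (Lemmas~\ref{Lemma:successeur dans lsm_n}--\ref{Lemma:successeur}) showing that as $\lambda$ increases past the right endpoint $\lambda_W^{\max}$ of the parameter interval realizing a prefix $W$, the prefix moves to exactly the \emph{next} LSM word, with no gap. Passing to the limit (Proposition~\ref{Prop:surjectivity}) then shows every LSM sequence not ending in infinitely many zeros equals some $\omega_\lambda(\infty)$, and Lemma~\ref{Lemma:orbite1} identifies these with the sequences $\O_\beta(1)$, $\beta>1$. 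Your monotonicity sketch is in the same spirit as the paper's (Lemmas~\ref{Lemma:variation des bornes}--\ref{Lemma:variation2} and Proposition~\ref{Prop:increasing}), though the strict inequality $\omega_\lambda(\infty)\prec\omega_{\lambda'}(\infty)$ requires the careful comparison through $\ell_\lambda<\ell_{\lambda'}$ rather than following directly from monotonicity of the endpoints $m_i^\lambda$. I would recommend restructuring: prove the successor/realization statements, conclude surjectivity, and let continuity fall out of monotonicity plus surjectivity.
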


\subsubsection{Variation of $\lambda\mapsto\beta(\lambda)$}

\begin{lemma}
\label{Lemma:variation des bornes}
Let $W$ be a finite sequence of nonnegative integers. The left endpoint $m_W^{\lambda}$ of $I_W^\lambda$ is a continuous decreasing function of $\lambda$ on its interval of definition. 
\end{lemma}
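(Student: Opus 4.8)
The plan is to read off both properties from the fact that $m_W^\lambda$ is obtained by composing, and evaluating at $0$, the two elementary homographic maps $h$ and $h_0$, each of which behaves monotonically in a way that is stable under composition. Write $W=x_0\dots x_n$; then $m_W^\lambda=h_{x_0}\circ\cdots\circ h_{x_n}(0)$, and since $h_i=h^i\circ h_0$, this is a single composition $g_1\circ\cdots\circ g_N(0)$ in which each $g_s$ is a copy of either $h$ or $h_0$ (carrying the current value of $\lambda$). The two facts I would establish first, by direct computation, are: (i) for fixed admissible $y$, both $h(y)=1/(\lambda-y)$ and $h_0(y)=y/(\lambda y+1)$ are \emph{decreasing} in $\lambda$ (indeed $\partial_\lambda h=-1/(\lambda-y)^2<0$ and $\partial_\lambda h_0=-y^2/(\lambda y+1)^2\le0$); and (ii) for fixed $\lambda$ both are strictly \emph{increasing} in $y$ on their domains ($y<\lambda$ for $h$, $y\ge0$ for $h_0$).

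The monotonicity of $m_W^\lambda$ then follows from a monotone-composition argument that avoids any differentiation. Set $z_{N+1}\equiv0$ and $z_s(\lambda)\egdef g_s\circ\cdots\circ g_N(0)$, so that $z_s=g_s(z_{s+1})$ and $m_W^\lambda=z_1(\lambda)$. I would prove by downward induction on $s$ that each $z_s$ is non-increasing in $\lambda$: assuming $z_{s+1}(\lambda_1)\ge z_{s+1}(\lambda_2)$ for $\lambda_1<\lambda_2$, one gets
$$
z_s(\lambda_1)=g_s^{\lambda_1}\bigl(z_{s+1}(\lambda_1)\bigr)\ge g_s^{\lambda_1}\bigl(z_{s+1}(\lambda_2)\bigr)\ge g_s^{\lambda_2}\bigl(z_{s+1}(\lambda_2)\bigr)=z_s(\lambda_2),
$$
where the first inequality uses (ii) and the second uses (i). Strictness is handled by noting that the $\lambda$-decrease in (i) is strict for $h$ at every point (even at $y=0$, since $h(0)=1/\lambda$) and that the increase in (ii) is strict, so any occurrence of $h$ in the chain—which happens as soon as $W$ contains a nonzero letter—propagates to a strict inequality $z_1(\lambda_1)>z_1(\lambda_2)$; when $W=0\cdots0$ one simply has $m_W^\lambda\equiv0$.

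For continuity I would observe that, the composition pattern $g_1,\dots,g_N$ being fixed, $m_W^\lambda$ is a rational function of $\lambda$ (its coefficients are the polynomials $P_i(\lambda)$ of the matrices $H_{x_j}$), hence continuous wherever no denominator vanishes, and no denominator vanishes precisely on the interval of definition. The main point to nail down—and the only real obstacle—is the domain bookkeeping: properties (i)--(ii) for $h$ require its argument to satisfy $y<\lambda$, so I must check that on the whole interval of definition of $W$ (the set of $\lambda$ for which $I_W^\lambda\neq\emptyset$) every intermediate value $z_{s+1}(\lambda)$ fed into a copy of $h$ indeed stays below $\lambda$, and below $\ell_\lambda$ for the top letter $i_\lambda$. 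This is exactly the content of $m_W^\lambda$ being well defined: by the construction of the $h_i$, non-emptiness of $I_W^\lambda$ forces every such intermediate value into the pole-free branch, so (i)--(ii) apply throughout and the two one-step inequalities above are legitimate. An equivalent route is the continued-fraction recursion $\brg c_1,\dots,c_m\brd_\lambda=1/(c_1\lambda-\brg c_2,\dots,c_m\brd_\lambda)$ coming from Lemma~\ref{Lemma:left endpoint}: since the tail is decreasing by induction its derivative is negative, and as $c_1\ge1$ the denominator $c_1\lambda-\brg c_2,\dots,c_m\brd_\lambda$ then has positive derivative, forcing the whole fraction to decrease; the positivity of this denominator on the interval of definition is again the crux.
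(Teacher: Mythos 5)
Your proof is correct, and it takes a slightly different (though closely related) route from the paper's. The paper's proof is a two-line appeal to Lemma~\ref{Lemma:left endpoint}: it writes $m_W^\lambda$ explicitly as a $\lambda$-continued fraction $\brg b_1,\dots,b_\ell\brd_\lambda$ and asserts that a straightforward induction on $\ell$ (via the recursion $\brg c_1,\dots,c_m\brd_\lambda=1/(c_1\lambda-\brg c_2,\dots,c_m\brd_\lambda)$, with the tail decreasing and the denominator therefore increasing) shows this is decreasing in $\lambda$ — exactly the ``equivalent route'' you sketch at the end. Your primary argument instead inducts over the finer decomposition into the atomic maps $h$ and $h_0$, using that each is increasing in $y$ and (weakly) decreasing in $\lambda$; the two inductions are the same mechanism at different granularities. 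What your version buys is that it makes explicit the point the paper's ``easily checked'' elides, namely the domain/positivity bookkeeping (every intermediate value fed into $h^{\lambda_1}$ must lie below $\lambda_1$); note that this is in fact supplied by your own induction hypothesis, since $z_{s+1}(\lambda_2)\le z_{s+1}(\lambda_1)<\lambda_1$, so the cross-term $g_s^{\lambda_1}(z_{s+1}(\lambda_2))$ is automatically legitimate. You also treat continuity explicitly (as a rational function with nonvanishing denominator $\delta>0$ from Lemma~\ref{Lemma:coeffMatrice}), which the paper leaves implicit, and you correctly isolate the degenerate case $W=0\cdots0$ where $m_W^\lambda\equiv0$ and the monotonicity is not strict — a small imprecision in the lemma's statement that is harmless in its applications, where the words used always contain a nonzero letter.
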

\begin{proof}
Lemma~\ref{Lemma:left endpoint} gives the explicit expression of $m_W^{\lambda}$ in terms of $\lambda$-continued fractions. 
It is easily checked by induction on $\ell$ that $\lambda\mapsto \brg a_1, a_2,\ldots , a_\ell \brd_\lambda$ is a decreasing function of $\lambda$. 
\end{proof}

\begin{lemma}
\label{Lemma:variation a x fixe}
Let $0<\lambda<\lambda'<2$. If $x\ge0$ is such that there exists $a_0,\ldots,a_{n-1}$ satisfying $x\in I_{a_0\ldots a_{n-1}}^\lambda\cap I_{a_0\ldots a_{n-1}}^{\lambda'}$, then $T_\lambda^n(x)<T_{\lambda'}^n(x)$.
\end{lemma}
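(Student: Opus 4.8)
The plan is to follow the two forward orbits simultaneously. Write $y_j\egdef T_\lambda^j(x)$ and $y_j'\egdef T_{\lambda'}^j(x)$ for $0\le j\le n$, so $y_0=y_0'=x$; I may assume $x>0$ (for $x=0$ one has $a_0=\dots=a_{n-1}=0$ and $T_\lambda^n(0)=T_{\lambda'}^n(0)=0$, so the strict statement is understood for $x>0$). Writing $h_i^\lambda$, $h_i^{\lambda'}$ for the homographic function $h_i$ built from $\lambda$, resp. $\lambda'$, the hypothesis says precisely $y_j\in I_{a_j}^\lambda$, $y_j'\in I_{a_j}^{\lambda'}$, i.e.
$$ h_{a_j}^\lambda(y_{j+1})=y_j \quad\text{and}\quad h_{a_j}^{\lambda'}(y_{j+1}')=y_j', \qquad 0\le j\le n-1. $$
Since $\lambda<\lambda'$ forces $i_\lambda\le i_{\lambda'}$ (Proposition~\ref{Prop:i_lambda}) and each $a_j\le i_\lambda$, all symbols are admissible for both parameters. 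I would then prove by induction on $j$ the joint invariant $y_j'>0$ and $y_j\le y_j'$ (with $y_j<y_j'$ for $j\ge1$); the case $j=n$ is the conclusion.

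The engine is a monotonicity of the forward maps in $\lambda$: \emph{for fixed $w>0$ in the domain of both, $h_i^\lambda(w)>h_i^{\lambda'}(w)$}. I would prove this by induction on $i$ via $h_{i+1}=h\circ h_i$. For $i=0$, $h_0^\lambda(w)=w/(\lambda w+1)$ is visibly strictly decreasing in $\lambda$ for $w>0$. For the step, on the domain of $h_{i+1}$ one has $h_i(w)\in[0,\lambda[$ (this is exactly what makes $h\circ h_i$ land in $[0,\infty[$), so
$$ h_{i+1}^\lambda(w)=\frac{1}{\lambda-h_i^\lambda(w)}, $$
and from $\lambda<\lambda'$ together with $h_i^\lambda(w)>h_i^{\lambda'}(w)$ the denominators obey $0<\lambda-h_i^\lambda(w)<\lambda'-h_i^{\lambda'}(w)$, whence $h_{i+1}^\lambda(w)>h_{i+1}^{\lambda'}(w)$. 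The same recursion evaluated at $w=0$ (where the first application of $h$ already separates the two parameters) shows that the left endpoints $m_i^\lambda=h_i^\lambda(0)$ are \emph{strictly} decreasing in $\lambda$ for $i\ge1$, which is the strict form of Lemma~\ref{Lemma:variation des bornes} that I will use.

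For the main induction, assume $y_j\le y_j'$ and $y_j'>0$. First I rule out $y_{j+1}'=0$: otherwise $y_j'=h_{a_j}^{\lambda'}(0)=m_{a_j}^{\lambda'}$, which (as $y_j'>0$) forces $a_j\ge1$, and then $m_{a_j}^\lambda\le y_j\le y_j'=m_{a_j}^{\lambda'}$ contradicts $m_{a_j}^{\lambda'}<m_{a_j}^\lambda$; hence $y_{j+1}'>0$. Now compare: if $a_j=i_\lambda$ and $y_{j+1}'\ge\ell_\lambda$, then $y_{j+1}=(h_{i_\lambda}^\lambda)^{-1}(y_j)\in[0,\ell_\lambda[$ gives $y_{j+1}<\ell_\lambda\le y_{j+1}'$ at once. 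Otherwise $y_{j+1}'$ lies in the domain of $h_{a_j}^\lambda$, and I argue by contradiction: if $y_{j+1}\ge y_{j+1}'$, then using that $h_{a_j}^\lambda$ is increasing on its domain and the monotonicity lemma at $w=y_{j+1}'>0$,
$$ y_j=h_{a_j}^\lambda(y_{j+1})\ge h_{a_j}^\lambda(y_{j+1}')>h_{a_j}^{\lambda'}(y_{j+1}')=y_j', $$
contradicting $y_j\le y_j'$. Hence $y_{j+1}<y_{j+1}'$, and both invariants persist.

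The main obstacle is not the monotonicity mechanism but the bookkeeping forced by the two distinct partitions $(I_i^\lambda)$ and $(I_i^{\lambda'})$: the forward value $y_{j+1}'$ need not lie in $I_{a_j}^\lambda$ (only in $I_{a_j}^{\lambda'}$), so one cannot naively compose inverse branches, and the degenerate possibility $y_{j+1}'=0$ — which would propagate to $y_n'=0$ and destroy the strict inequality — must be excluded. Both difficulties are resolved above by working with the globally monotone forward maps $h_{a_j}^\lambda$ and by invoking the strict decrease of the endpoints $m_{a_j}^\lambda$ in $\lambda$.
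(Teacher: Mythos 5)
Your proof is correct and follows essentially the same route as the paper, whose own proof is only a two-line sketch (``easy induction on $a_0$'' for $n=1$, then induction on $n$): your monotonicity engine $h_i^\lambda(w)>h_i^{\lambda'}(w)$, proved by induction on $i$, is precisely the $n=1$ case, and your orbit induction on $j$ is the induction on $n$. The additional care you take with the edge cases ($x=0$, the possibility $y_{j+1}'=0$, and the restricted domain of $h_{i_\lambda}^\lambda$) simply fills in details the paper leaves implicit.
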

\begin{proof}
For $n=1$, we get the result by an easy induction on $a_0$. We then make an induction on $n$ to prove the lemma.
\end{proof}

\begin{lemma}
\label{Lemma:variation2}
For a fixed $x\in[0,\infty[$, if $0<\lambda<\lambda'<2$, then $\omega_\lambda(x)\prec\omega_{\lambda'}(x)$.
\end{lemma}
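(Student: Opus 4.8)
The plan is to compare the two codings symbol by symbol: first I would establish the weak inequality $\omega_\lambda(x)\preceq\omega_{\lambda'}(x)$ using the monotonicity of the partitions, and then upgrade it to a strict one by ruling out the possibility that the two codings coincide. The statement is to be understood for $x>0$; for $x=0$ both codings equal $0\,0\,0\dots$ and the conclusion degenerates to an equality.

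First I would prove the following comparison on common prefixes: if $x_0\dots x_{n-1}=x_0'\dots x_{n-1}'$ (the empty prefix when $n=0$), then $x_n\le x_n'$. For $n\ge1$ the common prefix means $x\in I_{x_0\dots x_{n-1}}^\lambda\cap I_{x_0\dots x_{n-1}}^{\lambda'}$, so Lemma~\ref{Lemma:variation a x fixe} gives $T_\lambda^n(x)<T_{\lambda'}^n(x)$; for $n=0$ we simply have $T_\lambda^0(x)=x=T_{\lambda'}^0(x)$. In both cases, since $T_\lambda^n(x)\in I_{x_n}^\lambda$ we obtain $T_{\lambda'}^n(x)\ge T_\lambda^n(x)\ge m_{x_n}^\lambda\ge m_{x_n}^{\lambda'}$, where the last inequality uses that $\lambda\mapsto m_{x_n}^\lambda$ is decreasing (Lemma~\ref{Lemma:variation des bornes}) and that the symbol $x_n\le i_\lambda\le i_{\lambda'}$ is still admissible for $\lambda'$ (Proposition~\ref{Prop:i_lambda}). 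Thus $T_{\lambda'}^n(x)$ lies to the right of the left endpoint of $I_{x_n}^{\lambda'}$, which forces $x_n'\ge x_n$. Consequently $\omega_\lambda(x)\preceq\omega_{\lambda'}(x)$: either the two sequences first differ at some index $n$, where the common prefix gives $x_n<x_n'$, or they coincide.

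It remains to exclude the case $\omega_\lambda(x)=\omega_{\lambda'}(x)=:\omega$ for $x>0$. Here I would invoke the expansion developed in Section~\ref{Sec:continued fractions}: by Lemma~\ref{Lemma:left endpoint} together with the convergence $m_{x_0\dots x_n}^\mu\to x$ (Theorem~\ref{Thm:distinct_points}), the common coding $\omega$ produces the same coefficient sequence $(c_i)$ (read off the run-lengths of $\omega$) for both parameters, so that $x=\brg c_1,c_2,\dots\brd_\lambda=\brg c_1,c_2,\dots\brd_{\lambda'}$, with $c_1\ge1$ since $x>0$. The key is the strict monotonicity in $\mu$ of this value. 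Writing $V(\mu)=\brg c_1,c_2,\dots\brd_\mu$ and $W(\mu)=\brg c_2,c_3,\dots\brd_\mu$ for its tail, each is a non-increasing function of $\mu$ (a pointwise limit of the strictly decreasing truncations of Lemma~\ref{Lemma:variation des bornes}), and $V(\mu)=1/(c_1\mu-W(\mu))$ with $c_1\mu-W(\mu)=1/x>0$. If $V(\lambda)=V(\lambda')$, equating reciprocals yields $W(\lambda')-W(\lambda)=c_1(\lambda'-\lambda)>0$, contradicting $W(\lambda')\le W(\lambda)$. (When $\omega$ is eventually $0$ the continued fraction terminates and strictness comes directly from Lemma~\ref{Lemma:variation des bornes}.) Hence $\brg c_1,c_2,\dots\brd_\lambda>\brg c_1,c_2,\dots\brd_{\lambda'}$, contradicting their common value $x$; so the codings must differ and, by the previous paragraph, $\omega_\lambda(x)\prec\omega_{\lambda'}(x)$.

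The main obstacle is precisely this last step. The naive idea of passing the strict monotonicity of the finite endpoints $m_{x_0\dots x_n}^\mu$ to the limit fails, since a limit of strictly decreasing functions is only non-increasing; indeed $m_{x_0\dots x_n}^\lambda$ and $m_{x_0\dots x_n}^{\lambda'}$ both converge to the same $x$. The reciprocal–tail identity above is what restores strictness from the merely non-strict limiting monotonicity, and it is the crux of the argument.
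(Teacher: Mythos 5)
Your proof is correct, and its first half (the weak inequality $\omega_\lambda(x)\preceq\omega_{\lambda'}(x)$ obtained by comparing common prefixes via Lemmas~\ref{Lemma:variation des bornes} and~\ref{Lemma:variation a x fixe}) is essentially the paper's own argument. Where you genuinely diverge is in the upgrade to strictness. The paper stays on the symbolic side: if $a_0=a_0'$, then $T_\lambda(x)<T_{\lambda'}(x)$ by Lemma~\ref{Lemma:variation a x fixe}, so Theorem~\ref{Thm:distinct_points} (injectivity of the coding, applied at the \emph{single} parameter $\lambda'$) combined with Lemma~\ref{lemma:omega croissant} gives $\omega_{\lambda'}(T_\lambda(x))\prec\omega_{\lambda'}(T_{\lambda'}(x))$, and chaining this with the already-established $\omega_\lambda(T_\lambda(x))\preceq\omega_{\lambda'}(T_\lambda(x))$ yields $\sigma\omega_\lambda(x)\prec\sigma\omega_{\lambda'}(x)$ in three lines. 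You instead pass to the $\lambda$-continued-fraction representation and show that a fixed coefficient sequence cannot evaluate to the same positive $x$ at two parameters, using the reciprocal--tail identity $V(\mu)=1/(c_1\mu-W(\mu))$ to recover strictness from the merely non-strict monotonicity of the limit $W$. This is heavier (it imports the machinery of Section~\ref{Sec:continued fractions}), but it is sound, and it isolates a genuine subtlety -- that strict monotonicity of the truncations does not survive the limit -- which the paper's route never has to confront. Two points to tighten: Lemma~\ref{Lemma:variation des bornes} is stated only for the cylinder endpoints $m_W^\mu$, so for the monotonicity of the truncated tails $\brg c_2,\dots,c_N\brd_\mu$ on $[\lambda,\lambda']$ you need the inductive claim inside its proof \emph{plus} the observation that these tails are nonnegative at $\mu=\lambda$ (they are the intermediate values of the composition $h^{x_0}\circ h_0\circ\cdots\circ h^{x_n}\circ h_0(0)$), which is what keeps every partial denominator positive on $[\lambda,\lambda']$ and lets the induction run; and the convergence of $\brg c_1,\dots,c_N\brd_\mu$ to $x$ is guaranteed only along the subsequence $N=x_0+\cdots+x_n$, which is all you need. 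Your explicit exclusion of $x=0$ is a point the paper silently elides; its own strictness step degenerates there as well.
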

\begin{proof}
Let $\omega_\lambda(x)=a_0\ldots a_{n}\ldots$, and $\omega_{\lambda'}(x)=a_0'\ldots a_{n}'\ldots$. Assume that for some $n\ge1$, $a_0\ldots a_{n-1}=a_0'\ldots a_{n-1}'$. Then by Lemma~\ref{Lemma:variation a x fixe}, $T_\lambda^n(x)<T_{\lambda'}^n(x)$. But $T_\lambda^n(x)\in I_{a_n}^\lambda=[m_{a_n}^\lambda,m_{a_n+1}^\lambda[$, while $T_{\lambda'}^n(x)\in I_{a_n'}^{\lambda'}=[m_{a_n'}^{\lambda'},m_{a_n'+1}^{\lambda'}[$, thus $m_{a_n}^{\lambda}< m_{a_n'+1}^{\lambda'}$. By Lemma~\ref{Lemma:variation des bornes}, $m_{a_n}^{\lambda'}\le m_{a_n}^{\lambda}$, therefore $m_{a_n}^{\lambda'}<m_{a_n'+1}^{\lambda'}$. This proves that $a_n\le a'_n$. It follows that $\omega_\lambda(x)\preceq\omega_{\lambda'}(x)$. It remains to show that the orbits are different. If $a_0=a'_0$, by Lemma~\ref{Lemma:variation a x fixe}, we get $T_\lambda(x)<T_{\lambda'}(x)$. Hence by Theorem~\ref{Thm:distinct_points}, $\omega_{\lambda'}\left(T_\lambda(x)\right) \prec\omega_{\lambda'}\left(T_{\lambda'}(x)\right)$. But we already know that $\omega_{\lambda}\left(T_\lambda(x)\right) \preceq\omega_{\lambda'}\left(T_{\lambda}(x)\right)$.
\end{proof}

\begin{prop}
\label{Prop:increasing}
 $\lambda\mapsto\beta(\lambda)$ is increasing.
\end{prop}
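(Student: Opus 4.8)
The plan is to push everything to the symbolic side. Recall that $\beta(\lambda)$ is characterized by $\O_{\beta(\lambda)}(1)=\omega_\lambda(\infty)$, and that the map $\beta\mapsto\O_\beta(1)$ is strictly increasing for the lexicographic order (this is classical for $\beta$-shifts, and also follows from Parry's corollary quoted above together with Lemma~\ref{Lemma:orbite1}, since $\beta$ is recoverable from $\O_\beta(1)$). Hence it suffices to prove that $\lambda\mapsto\omega_\lambda(\infty)$ is strictly increasing for $\prec$, i.e. that $\omega_\lambda(\infty)\prec\omega_{\lambda'}(\infty)$ whenever $0<\lambda<\lambda'<2$.

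I would first establish the non-strict monotonicity. Fix $\lambda<\lambda'$. By Lemma~\ref{Lemma:variation2}, for every $x\in[0,\infty[$ we have $\omega_\lambda(x)\prec\omega_{\lambda'}(x)$, and by definition of $\omega_{\lambda'}(\infty)$ we have $\omega_{\lambda'}(x)\preceq\omega_{\lambda'}(\infty)$; hence $\omega_\lambda(x)\prec\omega_{\lambda'}(\infty)$ for every $x$. Since $\omega_\lambda(\infty)=\lim_{x\to\infty}\uparrow\omega_\lambda(x)$ is the increasing limit of sequences that are all $\preceq\omega_{\lambda'}(\infty)$, we obtain $\omega_\lambda(\infty)\preceq\omega_{\lambda'}(\infty)$ (if the limit were $\succ\omega_{\lambda'}(\infty)$, then already some $\omega_\lambda(x)$ would agree with it on the relevant finite prefix and thus exceed $\omega_{\lambda'}(\infty)$).

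The main obstacle is strictness, because a pointwise strict inequality need not survive passage to the limit. Here I would exploit that the critical point of the dynamics, namely $\lambda$ itself, is an honest point of $[0,\infty[$. Assume for contradiction that $\omega_\lambda(\infty)=\omega_{\lambda'}(\infty)$, and treat first the generic case where $T_\lambda^j(\lambda)\neq0$ and $T_{\lambda'}^j(\lambda')\neq0$ for all $j\ge1$. Then Remark~\ref{Rmk:orbites} gives $\omega_\lambda(\lambda)=(\infty_0-1)\infty_1\infty_2\ldots$, a sequence depending only on $\omega_\lambda(\infty)$, and the same formula holds for $\lambda'$; our assumption therefore forces $\omega_\lambda(\lambda)=\omega_{\lambda'}(\lambda')$. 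Now I chain three comparisons of the number $\lambda$ (which, crucially, satisfies $\lambda<\lambda'$): by Lemma~\ref{Lemma:variation2} applied at the fixed point $x=\lambda$, $\omega_\lambda(\lambda)\prec\omega_{\lambda'}(\lambda)$; next, comparing the two points $\lambda<\lambda'$ under the single map $T_{\lambda'}$, Lemma~\ref{lemma:omega croissant} together with Theorem~\ref{Thm:distinct_points} gives $\omega_{\lambda'}(\lambda)\prec\omega_{\lambda'}(\lambda')$; and finally $\omega_{\lambda'}(\lambda')=\omega_\lambda(\lambda)$. Concatenating yields $\omega_\lambda(\lambda)\prec\omega_\lambda(\lambda)$, a contradiction. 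Thus $\omega_\lambda(\infty)\neq\omega_{\lambda'}(\infty)$, and combined with the previous paragraph $\omega_\lambda(\infty)\prec\omega_{\lambda'}(\infty)$, so $\beta(\lambda)<\beta(\lambda')$.

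It remains to dispose of the non-generic case, in which the orbit of $\lambda$ (or of $\lambda'$) eventually reaches $0$ and $\omega_\lambda(\infty)$ is periodic; I expect this to be the only delicate point. One option is to record the analogue of Remark~\ref{Rmk:orbites} expressing $\omega_\lambda(\lambda)$ in terms of the periodic pattern of $\omega_\lambda(\infty)$ and to rerun the same contradiction. A cleaner alternative is a density argument: the non-generic parameters form a countable set (for each $j\ge1$, $T_\mu^j(\mu)$ is, on each region of fixed combinatorics, a nonconstant rational function of $\mu$, so it vanishes only finitely often), hence the generic parameters are dense. Then, given arbitrary $\lambda_1<\lambda_2$, choose generic $\mu_1<\mu_2$ with $\lambda_1<\mu_1<\mu_2<\lambda_2$; the generic case gives $\beta(\mu_1)<\beta(\mu_2)$, and the non-strict monotonicity gives $\beta(\lambda_1)\le\beta(\mu_1)<\beta(\mu_2)\le\beta(\lambda_2)$, completing the proof.
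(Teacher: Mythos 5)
Your reduction to showing $\omega_\lambda(\infty)\prec\omega_{\lambda'}(\infty)$ is the same as the paper's, and your argument for the generic case is correct and genuinely different: the paper never looks at the orbit of the critical point through Remark~\ref{Rmk:orbites}. Instead it assumes (as it may) that $i_\lambda=i_{\lambda'}$, proves the single strict inequality $\ell_\lambda=T_\lambda(\lambda)<T_{\lambda'}(\lambda')=\ell_{\lambda'}$ using Lemmas~\ref{Lemma:variation des bornes} and~\ref{Lemma:variation a x fixe}, and then sandwiches
$\sigma\omega_\lambda(\infty)=\lim_{x\uparrow\ell_\lambda}\uparrow\omega_\lambda(x)\preceq\omega_\lambda(\ell_\lambda)\prec\omega_{\lambda'}(\ell_\lambda)\prec\lim_{x\uparrow\ell_{\lambda'}}\uparrow\omega_{\lambda'}(x)=\sigma\omega_{\lambda'}(\infty)$.
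This requires no dichotomy on whether the orbit of $\lambda$ hits $0$, which is exactly where your proof is not yet complete.

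The gap is the non-generic case, and neither of your two exits is currently a proof. The ``rerun with the periodic pattern'' option requires (i) the periodic analogue of Remark~\ref{Rmk:orbites}, which the paper only establishes later, inside the proof of Lemma~\ref{Lemma:successeur}, and (ii) ruling out the mixed case where exactly one of $\lambda,\lambda'$ is non-generic: for that you must show that the orbit of $\lambda$ hitting $0$ forces $\omega_\lambda(\infty)$ to be periodic while genericity forces it to be aperiodic (the latter does follow from Proposition~\ref{Prop:infinityUnreachable}, but it has to be said). The density argument hinges on the parenthetical claim that $\mu\mapsto T_\mu^j(\mu)$ is a \emph{nonconstant} rational function on each region of fixed combinatorics; nothing you have quoted rules out its vanishing identically on an interval, and Lemma~\ref{Lemma:variation a x fixe} does not apply because the point being iterated is the moving parameter $\mu$ itself. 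What is needed is strict monotonicity of $\mu\mapsto T_\mu^j(\mu)$ at fixed combinatorics; for $j=1$ this is precisely the inequality $T_\lambda(\lambda)<T_{\lambda'}(\lambda')$ that the paper proves, so the cleanest repair is to import that step --- at which point you are essentially running the paper's own argument.
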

\begin{proof}
 By Lemma~3 in \cite{parry1960}, it is enough to show that for $\lambda<\lambda'$, $\omega_\lambda(\infty)\prec\omega_{\lambda'}(\infty)$. Since $\omega_{\lambda}(\infty)$ and $\omega_{\lambda'}(\infty)$ respectively start with $i_\lambda$ and $i_{\lambda'}$, we can assume that there exists $k\ge1$ such that $\lambda_{k+1}<\lambda<\lambda'\le\lambda_{k+2}$, so that $i_\lambda=i_{\lambda'}=k$ (otherwise, we have $i_\lambda<i_{\lambda'}$ which directly gives the result).

We first show that $\ell_\lambda<\ell_{\lambda'}$. If $\lambda'=\lambda_{k+2}$, then $\ell_\lambda<\infty=\ell_{\lambda'}$. Otherwise, $\ell_{\lambda'}= T_{\lambda'}(\lambda')$. By Lemma~\ref{Lemma:variation des bornes}, we have
$$ m_{k-1}^{\lambda'} < m_{k-1}^{\lambda} < \lambda < \lambda' \le m_k^{\lambda'} < m_k^\lambda, $$
hence $\lambda\in I_{k-1}^{\lambda}\cap I_{k-1}^{\lambda'}$. Then by Lemma~\ref{Lemma:variation a x fixe}, $\ell_{\lambda'}= T_{\lambda'}(\lambda')>T_{\lambda'}(\lambda)>T_{\lambda}(\lambda)=\ell_\lambda$. 

Observe now that, since $\ell_{\lambda'}>\ell_\lambda$,
$$\sigma\omega_{\lambda'}(\infty)=\lim_{x\uparrow\ell_{\lambda'}}\uparrow \omega_{\lambda'}(x)\succ
\omega_{\lambda'}(\ell_\lambda),$$
which by Lemma~\ref{Lemma:variation2} is lexicographically after $\omega_{\lambda}(\ell_\lambda)$. The conclusion follows by noting that $\omega_{\lambda}(\ell_\lambda)\succeq\lim_{x\uparrow\ell_{\lambda}}\uparrow \omega_{\lambda}(x)=\sigma\omega_{\lambda}(\infty)$.
\end{proof}

\subsubsection{Surjectivity of $\lambda\mapsto\beta(\lambda)$}
\newcommand{\lsm}{\text{LSM}}
We define the set of sequences which are lexicographically shift maximal (LSM) as:
$$ \lsm \egdef \left\{ W=x_0 x_1\cdots\in\ZZ_+^{\ZZ_+}:\ x_0>0 \mbox{ and } \forall k\ge 0,\ x_k x_{k+1}\cdots \preceq  x_0 x_1\cdots \right\}. $$
By Lemma~\ref{lemma:lexico}, for any $0<\lambda<2$, $\omega_\lambda(\infty)\in \lsm$.

In the same way, we define the set of words of length $n+1$ which are lexicographically shift maximal as:
$$ \lsm_n \egdef \left\{ W=x_0\cdots x_n\in\ZZ_+^{n+1}:\ x_0>0 \mbox{ and }  \forall 0\le k\le n,\ x_k\cdots x_n\preceq  x_0\cdots x_n \right\}. $$

\begin{lemma}
 \label{Lemma:successeur dans lsm_n}
Let $W=x_0\cdots x_n\in\lsm_n$. 
Then $\SUCC(W)\egdef\min\{W'\in\lsm_n: W'\succ W\}$ always exists and is obtained in the following way:
Consider the longest strict suffix $x_{k+1}\dots x_n$ of $W$ which is also a prefix of $W$. Then 
$$\SUCC(W)=
\begin{cases}
 x_0\dots x_{k-1}(x_{k}+1)\!\!\!\!\underbrace{0\dots 0}_{n-k \mbox{ terms}}& \mbox{ if }k\not=0\\
(x_0+1)  \underbrace{0\dots 0}_{n \mbox{ terms}}& \mbox{ if }k=0.
\end{cases}
$$
\end{lemma}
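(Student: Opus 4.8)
I would prove the statement by establishing three facts about the candidate word $V$ produced by the formula: that $\SUCC(W)$ exists, that $V\in\lsm_n$ with $V\succ W$, and that every element of $\lsm_n$ exceeding $W$ is $\succeq V$. Together these force $V=\SUCC(W)$. Existence is cheap: $(x_0+1)0\cdots0\in\lsm_n$ and exceeds $W$, and any $W'\in\lsm_n$ with $W\prec W'\preceq(x_0+1)0\cdots0$ has $W'_0\in\{x_0,x_0+1\}$, hence (shift-maximality forces $W'_i\le W'_0$) all its letters lie in $\{0,\dots,x_0+1\}$; so the minimum is taken over a finite set. That $V\succ W$ is immediate, since $V$ agrees with $W$ up to index $k-1$ and $V_k=x_k+1>x_k$ (resp. $V_0=x_0+1$ when $k=0$).

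\textbf{Minimality.} Let $W'\in\lsm_n$ with $W'\succ W$, and let $r$ be the first index at which $W'_r\ne x_r$, so $W'_r>x_r$. If $r<k$, then $V$ and $W'$ first differ at $r$ with $V_r=x_r<W'_r$, whence $V\prec W'$. If $r=k$, then $W'_k\ge x_k+1=V_k$, and either $V\prec W'$ or $W'$ agrees with $V$ through index $k$ and $V_i=0\le W'_i$ afterwards, so $V\preceq W'$. The one case to rule out is $r>k$. Here $W'$ begins with $x_0\cdots x_k$ and agrees with $W$ through index $r-1\ge k+1$; since $x_{k+1}\cdots x_n$ is a prefix of $W$, the whole of $W$ — and hence the agreeing prefix of $W'$ — has period $k+1$, so $W'_{k+1+t}=W'_t$ for $t\le r-k-2$ while $W'_{k+1+(r-k-1)}=W'_r>x_r=x_{r-k-1}=W'_{r-k-1}$. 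Thus $\sigma^{k+1}W'\succ W'$, contradicting $W'\in\lsm_n$. Hence $r\le k$ and $W'\succeq V$.

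\textbf{Membership, reduced to a combinatorial core.} The delicate point is $V\in\lsm_n$. Writing $u:=x_0\cdots x_k$, the suffixes $\sigma^jV$ with $j>k$ are strings of $0$'s, hence $\prec V$; and for $1\le j\le k$ the comparison of $\sigma^jV$ with $V$ follows that of $\sigma^jW$ with $W$ until the incremented coordinate is reached. A short case analysis then shows that $\sigma^jV\succ V$ can occur for some $j$ \emph{if and only if} $u$ admits a proper border, i.e. $x_j\cdots x_k=x_0\cdots x_{k-j}$ for some $1\le j\le k$. So membership reduces to the single claim that the block $u$ is unbordered, and I expect this to be the main obstacle.

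\textbf{Resolving the core.} I would prove $u$ unbordered from two structural facts coming from $W\in\lsm_n$ and from $x_{k+1}\cdots x_n$ being the \emph{longest} border: (i) $u$ is primitive, its shortest period being $k+1$ (the shortest period of $W$); and (ii) $u$ is strictly larger than each of its proper rotations. For (ii), for $1\le j\le k$ the suffix $\sigma^jW$ cannot be a prefix of $W$ (that would be a border longer than $n-k$), so $\sigma^jW\prec W$ with a genuine first difference; using the period $k+1$ to identify the length-$(k+1)$ prefix of $\sigma^jW$ with the cyclic rotation $\mathrm{rot}_j(u)=x_j\cdots x_kx_0\cdots x_{j-1}$ (the difference cannot lie beyond index $k$, else that rotation would equal $u$, contradicting primitivity) shows $\mathrm{rot}_j(u)\prec u$. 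Finally, a primitive word that is strictly maximal among its rotations is unbordered: a border of length $b$ (hence period $k+1-b$) would make $\mathrm{rot}_{k+1-b}(u)=x_0\cdots x_{b-1}\,x_0\cdots x_{k-b}$ and $\mathrm{rot}_b(u)=x_b\cdots x_k\,x_0\cdots x_{b-1}$, and comparing $u$ against these two rotations would force simultaneously $x_b\cdots x_k\succ x_0\cdots x_{k-b}$ and $x_0\cdots x_{k-b}\succ x_b\cdots x_k$, a contradiction. Once $u$ is unbordered, the reduction gives $V\in\lsm_n$; combined with $V\succ W$ and the minimality step, this yields $V=\SUCC(W)$.
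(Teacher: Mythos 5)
The paper offers no proof of this lemma at all (it is dismissed as an ``easy exercise''), so there is nothing to compare against; your argument has to stand on its own, and it essentially does. The architecture is right and the key difficulty is correctly isolated: minimality and existence are handled cleanly (the case $r>k$, ruled out via the period-$(k+1)$ structure of the agreeing prefix forcing $\sigma^{k+1}W'\succ W'$, is exactly the point most people miss), and the reduction of membership to the unborderedness of the period block $u=x_0\cdots x_k$ is correct: for $1\le j\le k$ the first strict descent of $\sigma^jW$ against $W$ occurs at some position $t_0$, and $\sigma^jV\preceq V$ holds precisely when $t_0\le k-j$, which fails exactly when $u$ has period $j$, i.e.\ a border of length $k+1-j$.

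The one place where your write-up does not literally apply is step (ii). When $j>n-k$ the suffix $\sigma^jW$ has length $n+1-j<k+1$, so it has no ``length-$(k+1)$ prefix'' to identify with $\mathrm{rot}_j(u)$; this situation is common (e.g.\ whenever $W$ is unbordered, $k=n$ and every $j\ge1$ is of this type). The conclusion $\mathrm{rot}_j(u)\prec u$ is still true and the repair is immediate: by the period $k+1$ of $W$ one has $\sigma^jW_t=\mathrm{rot}_j(u)_t$ for all $t\le\min(n-j,k)$, and the first strict descent of $\sigma^jW$ against $W$ necessarily occurs at some $t_0\le n-j$, hence within this common range, so it transfers verbatim to a first strict descent of $\mathrm{rot}_j(u)$ against $u$. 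With that patch, (i), (ii) and the Lyndon-type fact (a primitive word strictly maximal among its rotations is unbordered) close the argument; in the latter you should also say explicitly why the first difference between $u$ and $\mathrm{rot}_b(u)$ cannot lie beyond index $k-b$ (it would give $u$ period $b$, and period $b$ together with the border's period $k+1-b$ forces $\mathrm{rot}_b(u)=u$, contradicting strictness). As a remark, you can bypass rotations entirely: a border of length $b$ of $u$ makes $\sigma^{k+1-b}W$ agree with $x_0\cdots x_{b-1}x_0x_1\cdots$, and comparing the resulting descent condition with the one coming from $\sigma^bW\preceq W$ yields two incompatible first differences for the pair of sequences $(x_t)$ and $(x_{b+t})$; this gives unborderedness in a few lines without invoking primitivity.
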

\begin{proof}
 easy exercise !!!
\end{proof}

For any $\lambda\in]0,2[$, let $\MAX_n^\lambda$ be the prefix of length $n+1$ of $\omega_\lambda(\infty)$:
$\MAX_n^\lambda\in\lsm_n$ and by Lemma~\ref{Lemma:variation2} $\lambda\mapsto \MAX_n^\lambda$ is non-decreasing.

\begin{lemma}
 \label{Lemma:interval}
Let $W\in\lsm_n$. If the set $\left\{\lambda\in ]0,2[: \MAX_n^\lambda=W\right\}$ is nonempty, then it is an interval of the form $\left]\lambda_W^{\min}, \lambda_W^{\max}\right]$.
\end{lemma}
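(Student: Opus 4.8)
The plan is to use the monotonicity of $\lambda\mapsto\MAX_n^\lambda$ to get that the set is an interval, and then to pin down exactly which endpoints belong to it by means of a one-sided continuity statement. First I would record that, since $\lambda\mapsto\MAX_n^\lambda$ is non-decreasing for the (total) lexicographic order on the finite set of words of length $n+1$ (this is the monotonicity noted just before the lemma, coming from Lemma~\ref{Lemma:variation2}), each level set $L_W\egdef\{\lambda:\MAX_n^\lambda=W\}$ is order-convex: if $\lambda_1<\lambda<\lambda_2$ with $\MAX_n^{\lambda_1}=\MAX_n^{\lambda_2}=W$, then $W=\MAX_n^{\lambda_1}\preceq\MAX_n^\lambda\preceq\MAX_n^{\lambda_2}=W$. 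Hence a nonempty $L_W$ is an interval. It remains to decide which endpoints are attained, and I claim both questions reduce to a single fact, namely that $\lambda\mapsto\MAX_n^\lambda$ is \emph{locally constant from the left}: for every $\lambda_0\in]0,2[$ there is $\varepsilon>0$ with $\MAX_n^\lambda=\MAX_n^{\lambda_0}$ for all $\lambda\in]\lambda_0-\varepsilon,\lambda_0]$.

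Granting this left-local-constancy, the conclusion is a routine statement about non-decreasing step functions. Writing $a=\inf L_W$ and $b=\sup L_W$ (with $b<2$, since $\MAX_0^\lambda=i_\lambda\to\infty$ as $\lambda\uparrow2$ by Proposition~\ref{Prop:i_lambda}, so a fixed finite word cannot be a prefix for $\lambda$ near $2$), left-local-constancy at $b$ forces $\MAX_n^\lambda=W$ just below $b$, hence $\MAX_n^b=W$ and $b\in L_W$; applied at $a$ (when $a>0$) it forces $\MAX_n^a$ to equal the common value $\MAX_n^\lambda\prec W$ for $\lambda<a$, so $a\notin L_W$. The same argument also excludes a singleton level set, since left-local-constancy puts a whole left half-neighbourhood of any point of $L_W$ inside $L_W$. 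Thus $L_W=\,]\lambda_W^{\min},\lambda_W^{\max}]$ with $\lambda_W^{\min}=a$ (possibly $0$) and $\lambda_W^{\max}=b$, as desired.

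To prove the left-local-constancy, fix $\lambda_0$ and put $W=\MAX_n^{\lambda_0}=a_0\cdots a_n$. By monotonicity $\MAX_n^\lambda\preceq W$ for $\lambda\le\lambda_0$, and since $\MAX_n^\lambda$ codes the rightmost nonempty cylinder, $I_W^\lambda\neq\emptyset$ forces $W\preceq\MAX_n^\lambda$; so it suffices to show $I_W^\lambda\neq\emptyset$ for $\lambda$ in a left-neighbourhood of $\lambda_0$. Two ingredients make this work. By Proposition~\ref{Prop:i_lambda} the intervals $]\lambda_{k-1},\lambda_k]$ on which $i_\lambda$ is constant are \emph{left-open}, so $i_\lambda\equiv p\egdef i_{\lambda_0}$ on a whole left-neighbourhood of $\lambda_0$. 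With $i_\lambda\equiv p$ fixed, unravelling the recursion $I_{a_0\cdots a_n}^\lambda=I_{a_0}^\lambda\cap T_\lambda^{-1}I_{a_1\cdots a_n}^\lambda$ together with $T_\lambda(I_p^\lambda)=[0,\ell_\lambda[$ (the rightmost atom $I_p^\lambda=h_p([0,\ell_\lambda[)$ is the only one whose domain is bounded by a pole) shows, by induction on $n$, that $I_W^\lambda\neq\emptyset$ if and only if $m_{a_{j+1}\cdots a_n}^\lambda<\ell_\lambda$ for every index $j$ with $a_j=p$ (the empty-suffix condition $0<\ell_\lambda$ being automatic). Now $I_W^{\lambda_0}\neq\emptyset$, because as the rightmost atom at level $n$ it contains all large $x$; hence these finitely many inequalities hold \emph{strictly} at $\lambda_0$. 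Each $m_{a_{j+1}\cdots a_n}^\lambda$ is continuous in $\lambda$ by Lemma~\ref{Lemma:variation des bornes}, and $\ell_\lambda$ is continuous on $]\lambda_{k-1},\lambda_k]$ (with the convention $\ell_\lambda\to\infty$ as $\lambda\uparrow\lambda_k$), so the strict inequalities persist on a neighbourhood of $\lambda_0$, in particular from the left, giving $I_W^\lambda\neq\emptyset$ there.

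The main obstacle is precisely the asymmetry that yields a half-open rather than closed or open interval: it rests on matching the left-open shape of the $i_\lambda$-constancy intervals of Proposition~\ref{Prop:i_lambda} with the strict inequality ``$m<\ell_\lambda$'' in the nonemptiness criterion, and on handling the boundary case $\ell_{\lambda_0}=+\infty$ (that is $\lambda_0=\lambda_k$) correctly. I would therefore take some care in establishing the nonemptiness criterion cleanly and in checking that persistence of the inequalities is only ever needed on the left; everything else (order-convexity and the step-function endpoint argument) is routine.
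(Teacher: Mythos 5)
Your proof is correct, and its skeleton is the same as the paper's: monotonicity of $\lambda\mapsto\MAX_n^\lambda$ makes the level set an interval, and everything else is reduced to the left-local-constancy of $\lambda\mapsto\MAX_n^\lambda$. The difference lies in how that key step is justified. The paper's version is a two-line argument: fix $x>m_W^{\lambda_0}$, use Lemma~\ref{Lemma:variation des bornes} to get $x>m_W^{\lambda'}$ for $\lambda'<\lambda_0$ close to $\lambda_0$, and conclude $\MAX_n^{\lambda'}\succeq W$. That last inference tacitly assumes that $m_W^{\lambda'}$ is still the genuine left endpoint of a \emph{nonempty} cylinder $I_W^{\lambda'}$ (otherwise $m_W^{\lambda'}=h_{a_0}\circ\cdots\circ h_{a_n}(0)$ is just a formal expression and $x>m_W^{\lambda'}$ says nothing about the coding of $x$); this is exactly the point your proof makes explicit and establishes, via the left-openness of the intervals of constancy of $i_\lambda$ from Proposition~\ref{Prop:i_lambda}, the nonemptiness criterion $m_{a_{j+1}\cdots a_n}^\lambda<\ell_\lambda$ at the indices where $a_j=i_\lambda$, and the one-sided continuity of $\ell_\lambda$ including the degenerate case $\ell_{\lambda_0}=\infty$. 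So your route is a more careful, self-contained implementation of the same idea rather than a genuinely different one; what it buys is a clean justification of the implication $I_W^{\lambda'}\neq\emptyset\Rightarrow W\preceq\MAX_n^{\lambda'}$ and of why the half-openness comes out on the left, at the cost of setting up the cylinder-nonemptiness criterion (whose induction, as you note, must be organized from the last index backwards so that each inequality is only invoked once the corresponding suffix cylinder is known to be nonempty). Your explicit derivation of the endpoint conclusions (sup attained, inf not attained, no singleton) from left-local-constancy is also slightly more complete than the paper's, which leaves that bookkeeping to the reader.
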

\begin{proof}
Since  $\lambda\mapsto \MAX_n^\lambda$ is non-decreasing, the above set is an interval. 
Let $\lambda$ be such that $\MAX_n^{\lambda}=W$ and consider $x>m_W^{\lambda}$. 
By Lemma~\ref{Lemma:variation des bornes}, for $\lambda'<\lambda$ close enough to $\lambda$, $x>m_W^{\lambda'}$, thus $\MAX_n^{\lambda'}\succeq W$. On the other hand, $\lambda'\mapsto \MAX_n^{\lambda'}$ is non-decreasing. 
We conclude that $\MAX_n^{\lambda'}=W$ for all $\lambda'<\lambda$ close enough to $\lambda$.
\end{proof}

We denote by $\mathring{I}$ the interior of $I$. 
\begin{lemma}
 \label{Lemma:lien entre orbites}
Let $\lambda$ be such that $\omega_\lambda(\lambda)$ starts with $x_0\dots x_n$.
If $\lambda\in\mathring{I}^\lambda_{x_0\dots x_n}$, then $\MAX_n^{\lambda'}=(x_0+1)x_1\dots x_n$ for any $\lambda'$ close enough to $\lambda$.
\end{lemma}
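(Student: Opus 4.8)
The plan is to deduce the identity for every parameter near $\lambda$ from the relationship of Remark~\ref{Rmk:orbites} between $\omega_\mu(\infty)$ and $\omega_\mu(\mu)$, after first showing that the hypothesis is stable under small perturbations. The whole argument rests on reading the interior condition as a non-vanishing condition on the orbit of the parameter.

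First I would unwind the interior condition at a general parameter $\mu$. Since $I^\mu_{x_0\dots x_n}=\bigl[m^\mu_{x_0\dots x_n},\,\sup I^\mu_{x_0\dots x_n}\bigr[$ is half-open on the right, the membership $\mu\in\mathring I^\mu_{x_0\dots x_n}$ means exactly $m^\mu_{x_0\dots x_n}<\mu<\sup I^\mu_{x_0\dots x_n}$, i.e. $\omega_\mu(\mu)$ begins with $x_0\dots x_n$ and $\mu>m^\mu_{x_0\dots x_n}$. By Lemma~\ref{lemma:zero} the left endpoint $m^\mu_{x_0\dots x_n}$ is the unique point coded by $x_0\dots x_n0\dots0\dots$, so $\mu>m^\mu_{x_0\dots x_n}$ is equivalent to $T_\mu^{n+1}(\mu)\neq0$, hence to $T_\mu^{\,j}(\mu)\neq0$ for all $1\le j\le n+1$ (recall $0$ is fixed by $T_\mu$). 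I would also observe that the interior condition forces $x_0=i_\mu-1$ and $\ell_\mu=T_\mu(\mu)<\infty$: if instead $x_0=i_\mu$, then $\mu\in I^\mu_{i_\mu}=[m^\mu_{i_\mu},\infty[$ together with $\mu\le m^\mu_{i_\mu}$ would give $\mu=m^\mu_{i_\mu}\le m^\mu_{x_0\dots x_n}$, contradicting $\mu>m^\mu_{x_0\dots x_n}$. In particular $\lambda\neq\lambda_k$, so by Proposition~\ref{Prop:i_lambda} $i_{\lambda'}=i_\lambda$ for all $\lambda'$ close to $\lambda$.

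Second, I would isolate the core computation, valid at any $\mu$ with $\mu\in\mathring I^\mu_{x_0\dots x_n}$. As in Remark~\ref{Rmk:orbites}, one has $\infty^\mu_0=i_\mu=x_0+1$ and $\sigma\omega_\mu(\infty)=\lim_{y\uparrow\ell_\mu}\omega_\mu(y)$, while $\omega_\mu(\ell_\mu)=\sigma\omega_\mu(\mu)$ begins with $x_1\dots x_n$. Since $T_\mu^{n+1}(\mu)\neq0$ places $\ell_\mu=T_\mu(\mu)$ strictly to the right of $m^\mu_{x_1\dots x_n}$, the left limit $\lim_{y\uparrow\ell_\mu}\omega_\mu(y)$ still begins with $x_1\dots x_n$, whence $\MAX^\mu_n=\infty^\mu_0\cdots\infty^\mu_n=(x_0+1)x_1\dots x_n$. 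Only the first $n+1$ symbols intervene, so here one needs $T_\mu^{\,j}(\mu)\neq0$ merely for $1\le j\le n+1$, which is exactly the interior condition at $\mu$ and which is why the full hypothesis of Remark~\ref{Rmk:orbites} is not required. It then remains to check $\lambda'\in\mathring I^{\lambda'}_{x_0\dots x_n}$ for every $\lambda'$ close to $\lambda$ and to apply this computation with $\mu=\lambda'$. The left inequality $m^{\lambda'}_{x_0\dots x_n}<\lambda'$ is immediate from the continuity of $\lambda'\mapsto m^{\lambda'}_{x_0\dots x_n}$ (Lemma~\ref{Lemma:variation des bornes}) and $m^\lambda_{x_0\dots x_n}<\lambda$. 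For the right inequality I would rule out that $\omega_{\lambda'}(\lambda')$ begins with some length-$(n+1)$ word $V\succ x_0\dots x_n$: by Lemma~\ref{lemma:omega croissant} each such interval $I^\lambda_V$ lies to the right of $I^\lambda_{x_0\dots x_n}$, so its left endpoint satisfies $m^\lambda_V\ge\sup I^\lambda_{x_0\dots x_n}>\lambda$, and as there are finitely many such $V$ with left endpoints varying continuously in the parameter, $m^{\lambda'}_V>\lambda'$ near $\lambda$, so $\lambda'\notin I^{\lambda'}_V$. Combined with the left inequality (which also excludes every word $\prec x_0\dots x_n$, their intervals lying left of $m^{\lambda'}_{x_0\dots x_n}$), this shows $\omega_{\lambda'}(\lambda')$ begins with exactly $x_0\dots x_n$ and $\lambda'>m^{\lambda'}_{x_0\dots x_n}$, i.e. $\lambda'\in\mathring I^{\lambda'}_{x_0\dots x_n}$.

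I expect the main obstacle to be the control of the right end of $I^{\lambda'}_{x_0\dots x_n}$ under perturbation. The finitely-many-words estimate above is designed to bypass any direct continuity statement for $\sup I^{\lambda'}_{x_0\dots x_n}$; the delicate point is that a word $V\succ x_0\dots x_n$ may index an empty interval at $\lambda$, where Lemma~\ref{Lemma:variation des bornes} does not literally apply, and one must still verify that the composition $h_{v_0}\circ\cdots\circ h_{v_n}(0)$ defining $m^{\lambda'}_V$ varies continuously wherever defined, so that no $\lambda'$ near $\lambda$ can fall into $I^{\lambda'}_V$. Alternatively, one may phrase the conclusion through Lemma~\ref{Lemma:interval}: the second step already yields $\MAX^\lambda_n=(x_0+1)x_1\dots x_n=:W$, so $\lambda$ lies in the interval $\left]\lambda_W^{\min},\lambda_W^{\max}\right]$, and the strict inequality $\lambda<\sup I^\lambda_{x_0\dots x_n}$ furnished by the interior hypothesis is precisely what forces $\lambda<\lambda_W^{\max}$, guaranteeing that a full two-sided neighbourhood of $\lambda$ is covered.
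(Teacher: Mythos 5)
Your argument is correct and follows essentially the same route as the paper's: the interior hypothesis forces $x_0=i_\lambda-1$ and $\ell_\lambda=T_\lambda(\lambda)$, the prefix $(x_0+1)x_1\dots x_n$ of $\omega_\lambda(\infty)$ is read off from $\sigma\omega_\lambda(\infty)=\lim_{y\uparrow\ell_\lambda}\omega_\lambda(y)$, and the hypothesis is propagated to nearby $\lambda'$ by continuity of the interval endpoints (the paper simply invokes continuity of both endpoints of $I^\lambda_{x_0\dots x_n}$, whereas you exclude the finitely many competing words $V$ — the same point, with the same technical caveat about words whose interval is empty at $\lambda$, which you rightly flag and which can be handled since the endpoints are explicit finite $\lambda$-continued fractions). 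One caution: your closing alternative via Lemma~\ref{Lemma:interval} would be circular, since identifying $\lambda_W^{\max}$ as the parameter at which the interior condition fails is precisely what Lemma~\ref{Lemma:successeur} derives \emph{from} the present lemma.
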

\begin{proof}
Since $\lambda\in\mathring{I}^\lambda_{x_0}$, we have $x_0=i_\lambda-1$, thus $\omega_\lambda(\infty)$ starts with $i_\lambda=x_0+1$.
Moreover, $\lambda\not=m_{i_\lambda}^\lambda$ implies that $\ell_\lambda=T_\lambda(\lambda)$. 
Hence, $\ell_\lambda\in\mathring{I}^\lambda_{x_1}$, $T_\lambda(\ell_\lambda)\in\mathring{I}^\lambda_{x_2}$, \dots, $T_\lambda^{n-1}(\ell_\lambda)\in\mathring{I}^\lambda_{x_n}$. It follows that $\MAX_n^{\lambda}=(x_0+1)x_1\dots x_n$ because $\lim_{x\to\infty}T_\lambda(x)=\ell_\lambda$.
By continuity with respect to $\lambda$ of the endpoints of the interval $I^\lambda_{x_0\dots x_n}$, any $\lambda'$ close enough to $\lambda$ also satisfies the hypothesis of the lemma and the claim follows.
\end{proof}

\begin{lemma}
 \label{Lemma:successeur}
Let $W\in\lsm_n$ such that $\left\{\lambda\in ]0,2[: \MAX_n^\lambda=W\right\}$ is nonempty. 
For any $\lambda'>\lambda_W^{\max}$ close enough to $\lambda_W^{\max}$, $\MAX_n^{\lambda'}=\SUCC(W)$.
\end{lemma}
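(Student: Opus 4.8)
Write $W=x_0\cdots x_n$ and set $\lambda_0:=\lambda_W^{\max}$, the largest $\lambda$ with $\MAX_n^\lambda=W$ (Lemma~\ref{Lemma:interval}). The plan is to read off $\SUCC(W)$ from the orbit of the point $\lambda_0$ itself under $T_{\lambda_0}$: I will show that this orbit is \emph{finite} (it reaches $0$), that its length is dictated by the self-overlap of $W$ (the index $k$ of Lemma~\ref{Lemma:successeur dans lsm_n}), and that an arbitrarily small increase of $\lambda$ pushes exactly one orbit point just across a single partition boundary, which by Lemma~\ref{Lemma:lien entre orbites} turns $\MAX_n$ into $\SUCC(W)$.

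First I would show the orbit of $\lambda_0$ is finite. Let $y_0\cdots y_n$ be the prefix of $\omega_{\lambda_0}(\lambda_0)$. If we had $\lambda_0\in\mathring I^{\lambda_0}_{y_0\cdots y_n}$, then Lemma~\ref{Lemma:lien entre orbites} would give $\MAX_n^{\lambda'}=(y_0+1)y_1\cdots y_n=W$ for \emph{all} $\lambda'$ in a full neighbourhood of $\lambda_0$, contradicting that $\lambda_0$ is the right endpoint of $\{\MAX_n^{\cdot}=W\}$. Hence some iterate of $\lambda_0$ meets a partition boundary within the first $n+1$ steps; since each boundary is a point $m_i^{\lambda_0}=h_i(0)$ that $T_{\lambda_0}$ sends to $0$, there is a least $s\le n$ with $T_{\lambda_0}^{s+1}(\lambda_0)=0$, and I may write $\omega_{\lambda_0}(\lambda_0)=c_0c_1\cdots c_s\,0\,0\cdots$ with $c_s\ge1$ and $c_0=i_{\lambda_0}-1=x_0-1$.

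Next I would identify $s$ with the overlap index $k$. Because the orbit of $\lambda_0$ (equivalently of $\ell_{\lambda_0}$, via Remark~\ref{Rmk:orbites}) is finite, the limit word $\omega_{\lambda_0}(\infty)$ is periodic: arguing as in Lemma~\ref{Lemma:orbite1} (transported through the conjugacy of Theorem~\ref{Thm:correspondence}, or directly from the left-continuity of $x\mapsto\omega_{\lambda_0}(x)$ at a point whose orbit hits $0$) one gets
$$\omega_{\lambda_0}(\infty)=\overline{(c_0+1)\,c_1\cdots c_{s-1}\,(c_s-1)},$$
of period $s+1$. Since $W$ is its length-$(n+1)$ prefix, $W$ has period $s+1$; as the minimal period of $W$ is $(n+1)$ minus the length $n-k$ of its longest suffix--prefix overlap, i.e. $k+1$, this gives $s\ge k$ together with the identities $x_i=c_i$ $(1\le i\le s-1)$ and $x_s=c_s-1$. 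Then, for $\lambda'>\lambda_0$, the monotonicities of Lemmas~\ref{lemma:omega croissant} and~\ref{Lemma:variation2} give $\omega_{\lambda'}(\lambda')\succ\omega_{\lambda_0}(\lambda_0)$, while continuity of $T_\lambda$ and of the endpoints $m_i^\lambda$ in $\lambda$ (Lemma~\ref{Lemma:variation des bornes}) keeps, for $\lambda'$ close enough, the first $s$ symbols equal to $c_0\cdots c_{s-1}$ and keeps $T^s_{\lambda'}(\lambda')$ inside $I_{c_s}^{\lambda'}$ but now strictly above $m_{c_s}^{\lambda'}$. Hence $T^{s+1}_{\lambda'}(\lambda')$ is a small positive number, the following symbols are $0$, and $\lambda'$ sits in the interior of the cylinder coded by $c_0c_1\cdots c_s0\cdots0$. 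Lemma~\ref{Lemma:lien entre orbites} then yields
$$\MAX_n^{\lambda'}=(c_0+1)c_1\cdots c_s\underbrace{0\cdots0}_{n-s}=x_0x_1\cdots x_{s-1}(x_s+1)\underbrace{0\cdots0}_{n-s}.$$

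Finally I would close the loop. This word lies in $\lsm_n$ and is $\succ W$, so it is $\succeq\SUCC(W)$; comparing it with the explicit $\SUCC(W)=x_0\cdots x_{k-1}(x_k+1)0^{\,n-k}$ of Lemma~\ref{Lemma:successeur dans lsm_n} forces $s\le k$, and with $s\ge k$ from the previous step we get $s=k$, whence $\MAX_n^{\lambda'}=\SUCC(W)$ on a right-neighbourhood of $\lambda_0$. The constant case $k=0$ (i.e. $W=x_0\cdots x_0$, $s=0$, $\ell_{\lambda_0}=0$) is degenerate and is read off directly: there $\lambda_0$ is the transition value at which $i_\lambda$ increases, and $\MAX_n^{\lambda'}=(x_0+1)0^n$ just above it. The main obstacle will be the third paragraph: pinning down $\omega_{\lambda_0}(\infty)$ from the finite orbit of $\lambda_0$ and matching the \emph{dynamical} first-hitting time $s$ with the \emph{combinatorial} overlap $k$. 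The clean two-sided squeeze—$s\ge k$ from the string period of $W$, and $s\le k$ from the admissibility (shift-maximality) of the resulting successor word—is precisely what makes the dynamical successor coincide with $\SUCC(W)$.
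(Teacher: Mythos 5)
Your proposal is correct and follows essentially the same route as the paper: you track the orbit of $\lambda_W^{\max}$ itself, show it hits a cylinder endpoint and hence $0$ at a first time $s$, deduce that $\omega_{\lambda_W^{\max}}(\infty)$ is periodic of period $s+1$, and then use right-continuity in $\lambda$ together with Lemma~\ref{Lemma:lien entre orbites} to read off $\MAX_n^{\lambda'}$ just to the right of $\lambda_W^{\max}$. The only real difference is your final identification of the resulting word with $\SUCC(W)$ by the two-sided squeeze ($s\ge k$ from the period of $W$, $s\le k$ from minimality of $\SUCC(W)$ among $\lsm_n$ words $\succ W$), which is a cleaner justification of that step than the paper's implicit appeal to the minimal period of $\omega_\lambda(\infty)$ determining the longest border of $W$.
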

\begin{proof}
Let $\lambda\egdef \lambda_W^{\max}$ and $x_0 x_1\dots \egdef\omega_{\lambda}(\lambda)$.
By definition, $\lambda=\lambda_W^{\max}$ does not satisfy the conclusion of Lemma~\ref{Lemma:lien entre orbites}. Therefore, $\lambda\notin\mathring{I}^\lambda_{x_0\dots x_n}$. 
Consider the smallest $j\ge0$ such that $\lambda\notin\mathring{I}^\lambda_{x_0\dots x_j}$.

If $j=0$, $x_0=i_\lambda$ and $\lambda=m_{i_\lambda}^\lambda$. In this case, we have $\lim_{x\to\infty}T_\lambda(x)=\infty$, thus $\omega_\lambda(\infty)=i_\lambda i_\lambda\dots$ and $W=i_\lambda \dots i_\lambda$.

Assume now that $j\ge1$. 
Since $\lambda\in\mathring{I}^\lambda_{x_0\dots x_{j-1}}$, Lemma~\ref{Lemma:lien entre orbites} proves that, for all $\lambda'$ close enough to $\lambda$, $\MAX_{j-1}^{\lambda'}=(x_0+1)x_1 \dots x_{j-1}$.
On the other hand, $T_\lambda^j(\lambda)=b^\lambda_{x_j}$, $x_j\not=0$ and $x_{j+1}=\dots=x_n=0$. 
We have in this case $\ell_\lambda=T_\lambda(\lambda)$. 
Thus, $\lim_{x\uparrow\ell_\lambda}\uparrow T_\lambda^{j-1}(x)=b^\lambda_{x_j}$ and 
$\lim_{x\uparrow\ell_\lambda}\uparrow T_\lambda^{j}(x)=\infty$.
Since $\lim_{x\to\infty}\uparrow T_\lambda(x)=\ell_\lambda$, we get $\lim_{x\uparrow\infty}\uparrow T_\lambda^{j}(x)=b^\lambda_{x_j}$ and $\lim_{x\uparrow\infty}\uparrow T_\lambda^{j+1}(x)=\infty$.
This means that the $(j+1)$-th term of $\omega_{\lambda}(\infty)$ is $x_j-1$ and $\omega_{\lambda}(\infty)$ is periodic of period $j+1$. This proves that $W$ is the prefix of length $n+1$ of the periodic repetition of the pattern $(x_0+1)x_1 \dots x_{j-1}(x_j-1)$.
Moreover, $j+1$ is the smallest period for $\omega_{\lambda}(\infty)$. Indeed, if we had a smallest period $r$, we would have $\lim_{x\to\infty}T_\lambda^r(x)=\infty$, which would imply by a similar argument that $T_\lambda^{r-1}(\lambda)=b^\lambda_{x_{r-1}}$. This would contradict the definition of $j$.

It remains to prove that for any $\lambda'>\lambda$ close enough to $\lambda$, $\MAX_n^{\lambda'}=\SUCC(W)$, that is $(x_0+1)x_1 \dots x_{j-1}x_j 0\dots 0$ when $j\ge1$ and $(x_0+1)0\dots 0$ when $j=0$.
Observe that the prefix of length $n+1$ of $\omega_{\lambda'} (\lambda')$ is a right-continuous function of $\lambda'$ by Lemmas~\ref{Lemma:variation des bornes} and~\ref{Lemma:variation a x fixe}. 
Therefore, for any $\lambda'>\lambda$ close enough to $\lambda$, $\omega_{\lambda'} (\lambda')$ starts with $x_0x_1\dots x_j 0\dots 0$ and $\lambda'\in\mathring{I}^{\lambda'}_{x_0\dots x_{j}0\dots 0}$. Applying Lemma~\ref{Lemma:lien entre orbites} we conclude the proof.
\end{proof}

\begin{prop}
\label{Prop:lsm_n}
For all $n\ge0$ and all $W\in\lsm_n$, there exists $\lambda\in]0,2[$ such that $W$ is a prefix of $\omega_\lambda(\infty)$. 
\end{prop}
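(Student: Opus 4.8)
The plan is to prove that the non-decreasing map $\lambda\mapsto\MAX_n^\lambda$ (monotone by Lemma~\ref{Lemma:variation2}) is surjective onto $\lsm_n$; since $W$ is a prefix of $\omega_\lambda(\infty)$ precisely when $\MAX_n^\lambda=W$, this is exactly the assertion. The organising principle is that the lexicographic order on $\lsm_n$ has order type $\omega$. Indeed, shift-maximality forces $x_k\le x_0$ for every $k$, so for each fixed first letter there are only finitely many words in $\lsm_n$, and all words with a given first letter precede all words with a larger one. Hence $\lsm_n$ has a least element, every element has only finitely many predecessors, and, by Lemma~\ref{Lemma:successeur dans lsm_n}, each $W$ has an immediate successor $\SUCC(W)$; consequently every word of $\lsm_n$ is reached from the minimum by finitely many applications of $\SUCC$.

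I would then run an induction along this order. The inductive step is already contained in the preceding lemmas: if $\{\lambda:\MAX_n^\lambda=W\}\ne\emptyset$, Lemma~\ref{Lemma:interval} tells us it is an interval $]\lambda_W^{\min},\lambda_W^{\max}]$, and Lemma~\ref{Lemma:successeur} shows that $\MAX_n^{\lambda'}=\SUCC(W)$ for $\lambda'$ slightly above $\lambda_W^{\max}$. Thus the image of $\lambda\mapsto\MAX_n^\lambda$ is closed under $\SUCC$, and it only remains to anchor the induction at the least element.

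For the base case I must realise the minimum $W_0=1\underbrace{0\cdots0}_{n}$ of $\lsm_n$ by some $\lambda$. Taking $\lambda$ close to $0$ gives $i_\lambda=1$ by Proposition~\ref{Prop:i_lambda}, so $\infty_0=1$; moreover $T_\lambda(x)\to\ell_\lambda=\lambda/(1-\lambda^2)$ as $x\to\infty$, and $\ell_\lambda$ is small and lies well inside $I_0^\lambda=[0,1/\lambda[$, on which $T_\lambda$ acts by $x\mapsto x/(1-\lambda x)$. For $\lambda$ small enough the first $n$ iterates of $\ell_\lambda$ stay inside $I_0^\lambda$, so $\infty_1=\cdots=\infty_n=0$ and $\MAX_n^\lambda=W_0$. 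Combining this with the successor step, an induction on the number of predecessors of $W$ yields that every $W\in\lsm_n$ lies in the image.

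The genuinely delicate input is Lemma~\ref{Lemma:successeur}, which I am allowed to assume: it guarantees that as $\lambda$ crosses $\lambda_W^{\max}$ the map advances by \emph{exactly} one step in $\lsm_n$, never skipping a word. Granting that, the remaining difficulty is purely organisational --- confirming that $\SUCC$ produces the lexicographically next admissible word and that the order type being $\omega$ makes finitely many successor steps suffice to reach any prescribed $W$. This bookkeeping, together with the short base-case computation, is where I would spend the care, rather than on any hard estimate.
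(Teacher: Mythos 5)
Your proposal is correct and follows essentially the same route as the paper: realise the minimum $1\underbrace{0\cdots 0}_{n}$ of $\lsm_n$ for $\lambda$ small (using $i_\lambda=1$ and $\ell_\lambda=\lambda(1-\lambda^2)^{-1}\to 0$), note that every $W\in\lsm_n$ has only finitely many predecessors, and climb to $W$ by repeated application of Lemma~\ref{Lemma:successeur}. Your explicit observation that shift-maximality forces $x_k\le x_0$, so the order type of $\lsm_n$ is $\omega$, is just a slightly more detailed justification of the finiteness claim the paper states without comment.
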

\begin{proof}
Fix $n\ge0$. The smallest sequence in $\lsm_n$ is $W=10\dots 0$. 
We first prove that $W=\MAX_n^\lambda$ for $\lambda$ small enough. 
For any $\lambda<1$, $i_\lambda=1$. Thus, $\ell_\lambda$ is the pole of $h_1$: $\ell_\lambda=\lambda(1-\lambda^2)^{-1}$ which tends to zero as $\lambda\downarrow 0$.
By Lemmas~\ref{Lemma:variation des bornes} and~\ref{Lemma:variation a x fixe}, for $\lambda$ small enough, we can ensure that $W=\MAX_n^\lambda$.

Any $W\in\lsm_n$ is such that $\{W'\in\lsm_n: W'\preceq W\}$ is finite. 
Hence, a repeated iteration of Lemma~\ref{Lemma:successeur} gives the desired result.
\end{proof}

\begin{prop}
\label{Prop:surjectivity}
 For all $W\in \lsm$, there exists $\lambda\in]0,2[$ such that $W=\omega_\lambda(\infty)$ if and only if $W$ does not end with infinitely many zeros. 
\end{prop}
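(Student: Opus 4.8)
The plan is to establish the two implications separately, the forward one being immediate and the reverse one resting on a nested-interval argument whose only delicate point is a strict inequality in the limit. For the direct implication, suppose $W=\omega_\lambda(\infty)$ for some $\lambda\in\,]0,2[$. Then Lemma~\ref{Lemma:zeros} guarantees that $\infty_n\neq0$ for infinitely many $n$, so $W$ does not end with infinitely many zeros; nothing more is needed here.

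For the converse, fix $W=x_0x_1\cdots\in\lsm$ which does not end with infinitely many zeros, and write $W_n\egdef x_0\cdots x_n$. First I would note that each $W_n$ lies in $\lsm_n$: if $x_kx_{k+1}\cdots\preceq x_0x_1\cdots$ as infinite sequences then, comparing only the positions covered by the suffix $x_k\cdots x_n$, one gets $x_k\cdots x_n\preceq x_0\cdots x_n$. By Proposition~\ref{Prop:lsm_n} the set $\{\lambda:\MAX_n^\lambda=W_n\}$ is nonempty, so by Lemma~\ref{Lemma:interval} it is an interval $]\lambda_n^{\min},\lambda_n^{\max}]$ (abbreviating $\lambda_{W_n}^{\min},\lambda_{W_n}^{\max}$). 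Since $W_n$ is a prefix of $W_{n+1}$, these intervals are nested and decreasing; in particular the $\lambda_n^{\max}$ are non-increasing, and I set $\lambda^*\egdef\inf_n\lambda_n^{\max}$.

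The whole point is to show that $\lambda^*\in\,]\lambda_n^{\min},\lambda_n^{\max}]$ for every $n$; granting this, $\MAX_n^{\lambda^*}=W_n$ for all $n$ and hence $\omega_{\lambda^*}(\infty)=W$, which is the desired conclusion (note $\lambda^*\le\lambda_0^{\max}<2$). The bound $\lambda^*\le\lambda_n^{\max}$ is immediate, and nesting gives $\lambda_n^{\min}<\lambda_m^{\max}$ for every $m$, whence $\lambda^*\ge\lambda_n^{\min}$. The main obstacle is the \emph{strict} inequality $\lambda^*>\lambda_n^{\min}$, and this is exactly where the hypothesis on $W$ must enter.

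Suppose, for contradiction, that $\lambda^*=\lambda_{n_0}^{\min}$ for some $n_0$. Because $W$ differs from the minimal word $10\cdots0$ beyond some position, for large $n$ the word $W_n$ is not $10\cdots0$ and therefore $\lambda_n^{\min}>0$ (a nonzero symbol past position $0$ cannot occur for $\lambda$ near $0$, by the description of $\MAX_n^\lambda$ for small $\lambda$ in the proof of Proposition~\ref{Prop:lsm_n}); hence $\lambda^*>0$ and $\omega_{\lambda^*}(\infty)$ is well defined. By nesting, $\lambda_m^{\min}=\lambda^*$ for all $m\ge n_0$, so $\lambda^*$ is the right endpoint of the interval attached to the predecessor $W_m'\egdef\MAX_m^{\lambda^*}$, and $W_m=\SUCC(W_m')$. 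Now the proof of Lemma~\ref{Lemma:successeur} applies at this common value $\lambda^*$: it shows that $\omega_{\lambda^*}(\infty)$ is periodic and that $\SUCC(W_m')$ is a fixed nonzero head $(x_0+1)x_1\cdots x_j$ followed by $m-j$ trailing zeros, with $j$ depending only on $\lambda^*$. Letting $m\to\infty$ forces $W=\lim_m W_m$ to end with infinitely many zeros, a contradiction. Therefore $\lambda^*>\lambda_n^{\min}$ for all $n$, which completes the argument.
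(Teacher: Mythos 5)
Your proof is correct, and its skeleton is the paper's: build the nested half-open intervals $\left]\lambda_{W_n}^{\min},\lambda_{W_n}^{\max}\right]$ from Proposition~\ref{Prop:lsm_n} and Lemma~\ref{Lemma:interval}, and reduce everything to the strict inequality $\lambda^*>\lambda_{W_n}^{\min}$. Where you genuinely diverge is in how that strictness is obtained. The paper notes that $W_n0$ is itself in $\lsm_{n+1}$, hence by Proposition~\ref{Prop:lsm_n} and Lemma~\ref{Lemma:interval} it is realized on an interval of positive length sitting at the bottom of $\{\lambda:\MAX_n^\lambda=W_n\}$; consequently $\lambda_{W_{n+1}}^{\min}=\lambda_{W_n}^{\min}$ is \emph{equivalent} to $W_{n+1}=W_n0$, and since $W$ does not end in zeros the left endpoints increase strictly infinitely often. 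You instead argue by contradiction, re-entering the proof of Lemma~\ref{Lemma:successeur} at the stabilized value $\lambda^*$ to show that $W_m=\SUCC\bigl(\MAX_m^{\lambda^*}\bigr)$ consists of a head of fixed length $j+1$ (depending only on $\lambda^*$) followed by $m-j$ zeros, forcing $W$ to be eventually zero. Both routes are valid; the paper's is lighter, needing only monotonicity of $\lambda\mapsto\MAX_{n+1}^\lambda$ plus nonemptiness applied to $W_n0$, whereas yours imports the periodicity analysis of Lemma~\ref{Lemma:successeur} but in exchange identifies concretely which eventually-zero words are excluded (truncations of the periodic $\omega_{\lambda^*}(\infty)$ at a right endpoint). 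Your preliminary verifications — that $\lambda^*>0$ because $W_n\neq 10\cdots0$ for large $n$, and that nesting forces $\lambda_{W_m}^{\min}=\lambda^*$ for all $m\ge n_0$ — are indeed necessary and are handled correctly.
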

\begin{proof}
By Lemma~\ref{Lemma:zeros}, $\omega_\lambda(\infty)$ cannot end with infinitely many zeros.
Conversely, let $W\in \lsm$ and for each $n\ge0$, let $W_n\in\lsm_n$ be the prefix of length $n+1$ of $W$. 
By Proposition~\ref{Prop:lsm_n}, $]\lambda_{W_n}^{\min},\lambda_{W_n}^{\max}]$ is nonempty. 
Thus, the decreasing sequence of these intervals has a nonempty intersection if $\lambda_{W_{n+1}}^{\min}>\lambda_{W_{n}}^{\min}$ infinitely often. 
Since $\lambda\mapsto\MAX_{n+1}^\lambda$ is non-decreasing, the equality $\lambda_{W_{n+1}}^{\min}=\lambda_{W_{n}}^{\min}$ is equivalent to $W_{n+1}=W_n0$.
\end{proof}

\begin{proof}[Proof of Theorem~\ref{Thm:beta(lambda)}]
 By Proposition~\ref{Prop:increasing}, $\lambda\mapsto\beta(\lambda)$ is increasing. By Proposition~\ref{Prop:surjectivity} and Lemma~\ref{Lemma:orbite1}, $\lambda\in]0,2[\mapsto\beta\in]1,\infty[$ is onto. By monotonicity, it is henceforth continuous.
\end{proof}

\subsubsection{Particular values of $\lambda\mapsto\beta(\lambda)$}

We turn in this section to the determination of $\beta$ corresponding to specific values of $\lambda$. The simplest case is when $\lambda=\lambda_k=2\cos(\pi/k)$ for some integer $k\ge3$. We know by Proposition~\ref{Prop:i_lambda} that $\lambda_k$ is the largest $\lambda$ for which $i_\lambda=k-2$. Hence $\omega_{\lambda_k}(\infty)$ is the largest $LSM$ sequence starting with $(k-2)$, that is $(k-2) (k-2) (k-2) \ldots$ The corresponding $\beta$ satisfies $\O_\beta(1)=(k-2) (k-2) (k-2) \ldots$, thus 
$$\beta(2\cos(\pi/k)) = k-1. $$

Another family of $\lambda$'s for which we determine the associated $\beta$ is $\lambda=1/\sqrt{k}$ for $k\ge2$.

\begin{lemma}
 For all $k\ge1$, if $\lambda\le1/\sqrt{k}$ the sequence $\omega_\lambda(\infty)$ starts with $1\underbrace{0\dots 0}_{k-1}$.
\end{lemma}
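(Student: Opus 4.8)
The plan is to reduce the statement to a single elementary inequality between two explicitly computable quantities, after using that $\lambda\le 1/\sqrt k\le1$ forces $i_\lambda=1$. First I would note that since $\lambda\le 1/\sqrt k\le 1=\lambda_3$, Proposition~\ref{Prop:i_lambda} gives $i_\lambda=1$ (because $\lambda\in]\lambda_2,\lambda_3]=]0,1]$). Hence the alphabet is $\{0,1\}$, the partition reduces to $I_0^\lambda=[0,1/\lambda[$ and $I_1^\lambda=[1/\lambda,\infty[$, and in particular $\infty_0=i_\lambda=1$. This already settles the case $k=1$, so from now on I assume $k\ge2$, whence $\lambda<1$ and $\ell_\lambda=h_0^{-1}(\lambda)=\lambda/(1-\lambda^2)<\infty$.

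Next I would use that a word $W$ is a prefix of $\omega_\lambda(\infty)$ if and only if $I_W^\lambda$ contains a neighbourhood of $+\infty$ (equivalently $\sup I_W^\lambda=\infty$), which follows from $\omega_\lambda(\infty)=\lim_{x\to\infty}\uparrow\omega_\lambda(x)$ and the monotonicity in Lemma~\ref{lemma:omega croissant}. It therefore suffices to prove that $I_{1\,0^{k-1}}^\lambda$ is unbounded. I would then compute the two ingredients in closed form: on $I_1^\lambda$ one has $T_\lambda=h_1^{-1}$, whose image is exactly $[0,\ell_\lambda[$ (indeed $h_1^{-1}(1/\lambda)=0$ since $h_1(0)=1/\lambda$, and $h_1^{-1}(x)\uparrow\ell_\lambda$ as $x\to\infty$); and the all-zero cylinder is $I_{0^{k-1}}^\lambda=h_0^{k-1}([0,\infty[)=[0,1/((k-1)\lambda)[$, using the easy induction $h_0^{j}(y)=y/(j\lambda y+1)$, so that $h_0^{k-1}(\infty)=1/((k-1)\lambda)$.

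The heart of the matter is the elementary comparison $\ell_\lambda\le 1/((k-1)\lambda)$, which after clearing denominators becomes $(k-1)\lambda^2\le 1-\lambda^2$, i.e. $k\lambda^2\le1$, i.e. precisely $\lambda\le1/\sqrt k$. Under this hypothesis every $x\in I_1^\lambda$ satisfies $T_\lambda(x)<\ell_\lambda\le1/((k-1)\lambda)$, hence $T_\lambda(x)\in I_{0^{k-1}}^\lambda$; consequently $I_{1\,0^{k-1}}^\lambda=I_1^\lambda=[1/\lambda,\infty[$ is unbounded, so $\omega_\lambda(\infty)$ begins with $1\,0^{k-1}$.

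I do not expect a serious obstacle here; the argument is almost purely computational. The only points that require genuine care are the two closed-form evaluations ($\ell_\lambda=\lambda/(1-\lambda^2)$ and $h_0^{k-1}(\infty)=1/((k-1)\lambda)$), and the observation that the image of $T_\lambda$ on $I_1^\lambda$ is the \emph{half-open} interval $[0,\ell_\lambda[$: it is this strictness that makes the boundary value $\lambda=1/\sqrt k$ work, since $T_\lambda(x)$ never attains $\ell_\lambda$ and so remains strictly below $1/((k-1)\lambda)$ even when $\ell_\lambda$ equals that endpoint.
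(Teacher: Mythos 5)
Your proof is correct, and it takes a more direct route than the paper's. The paper argues by induction on $k$: assuming the upper branch at order $k$ is $h_1\circ h_0^{k-1}$, it computes the matrix $H_1H_0^{k-1}=\begin{pmatrix} k\lambda & 1\\ k\lambda^2-1 & \lambda\end{pmatrix}$ and checks that the pole $\lambda/(1-k\lambda^2)$ of this upper branch stays below $m_1^\lambda=1/\lambda$ exactly when $(k+1)\lambda^2\le1$, so that the next symbol of $\omega_\lambda(\infty)$ is again $0$. You bypass both the induction and the matrix formalism by observing that $T_\lambda$ maps the whole unbounded interval $I_1^\lambda$ onto $[0,\ell_\lambda[$ with $\ell_\lambda=\lambda/(1-\lambda^2)$, and that this half-open interval is contained in the all-zero cylinder $I_{0^{k-1}}^\lambda=[0,1/((k-1)\lambda)[$ precisely when $k\lambda^2\le1$; hence $I_{1\,0^{k-1}}^\lambda=I_1^\lambda$ is a neighbourhood of $+\infty$ and the prefix claim follows from the monotonicity of $x\mapsto\omega_\lambda(x)$ (Lemma~\ref{lemma:omega croissant}). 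The two arguments rest on essentially the same elementary inequality, but yours packages it as a single containment rather than a step-by-step tracking of the upper branch; what the paper's version buys in exchange is the explicit matrix \eqref{eq:upperBranch}, which is reused immediately afterwards to see that $h_1\circ h_0^{k-1}$ degenerates to an affine map at $\lambda=1/\sqrt{k}$ and hence that $\omega_{1/\sqrt{k}}(\infty)$ is periodic. Your attention to the strictness of $T_\lambda(x)<\ell_\lambda$ at the boundary value $\lambda=1/\sqrt{k}$ is exactly the right point to flag.
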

\begin{proof}
 We prove the result by induction on $k$. 

If $\lambda\le1$, Proposition~\ref{Prop:i_lambda} shows that $i_\lambda=1$ hence $\infty_0=1$, which proves the result for $k=1$.

Let $k\ge1$ such that the result holds for $k$. This means that for $\lambda\le1/\sqrt{k}$, the upper branch at order $k$ is $h_1\circ h_0^{k-1}$. The associated matrix is 
\begin{equation}
 \label{eq:upperBranch}
H_1H_0^{k-1} = \begin{pmatrix}
                   k\lambda & 1\\
		   k\lambda^2-1 & \lambda
                  \end{pmatrix}.
\end{equation}
Observe that the pole of $h_1\circ h_0^{k-1}$ is $\lambda/(1-k\lambda^2)\le m_1^\lambda=1/\lambda$ as soon as $\lambda\le 1/\sqrt{k+1}$. Therefore $\infty_{k+1}^\lambda=0$ if $\lambda\le 1/\sqrt{k+1}$.
\end{proof}

Note that the function $h_1\circ h_0^{k-1}$ is affine if $\lambda=1/\sqrt{k}$ by~\eqref{eq:upperBranch}. We then have $\lim_{x\uparrow\infty}\uparrow T_{1/\sqrt{k}}^k (x) = \infty$, hence $\omega_{1/\sqrt{k}}(\infty)$ is a periodic repetition of the pattern $1\underbrace{0\dots 0}_{k-1}$. It remains to find the associated $\beta$. By Lemma~\ref{Lemma:orbite1}, $\O_\beta(\beta-[\beta])$ is $\underbrace{0\dots 0}_{k-2}10\ldots 0\ldots$
Since $\beta$ corresponds to $\lambda=1/\sqrt{k}<1$, we know that $\beta<2=\beta(1)$, thus $[\beta]=1$. Hence $\beta$ satisfies 
$$ \beta-1 = \dfrac{1}{\beta^{k-1}}. $$
Since $\beta>1$, $\beta(1/\sqrt{k})$ is the largest real root of $X^k-X^{k-1}-1$.

\subsubsection{Asymptotic behaviour of $\tau\mapsto\beta(2\cos(\pi/\tau))$}

\begin{figure}
\input{beta2-21.pstex_t}\includegraphics{./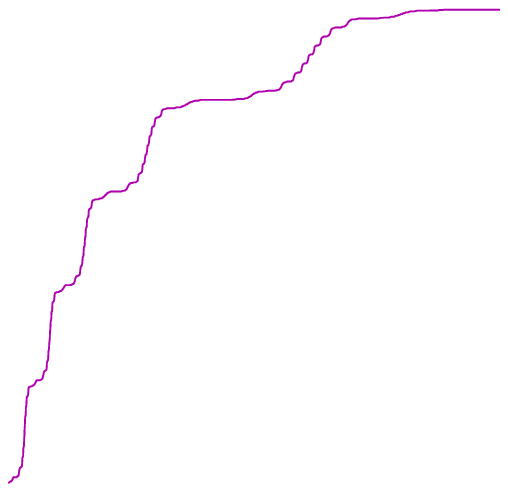}
\caption{Graph of $\tau\mapsto \beta(2\cos(\pi/\tau))$. Detail of the graph for $10.5\le \tau \le 11$.} 
\label{Fig:beta(t)}
\end{figure} 

As suggested by the particular values of $\beta$ obtained for $\lambda=2\cos(\pi/k)$, $k$ integer, $k\ge3$, we investigate here some properties of $\beta$ seen as a function of $\tau$, where $\tau>2$ is related to $\lambda$ by the relation $\lambda=2\cos(\pi/\tau)$. By composition, the map $\tau\mapsto\beta$ is also increasing, continuous, and sends $]2,\infty[$ onto $]1,\infty[$. 
We present in Figure~\ref{Fig:beta(t)} a numerical plot of this map, on which an asymptotic phenomenon appears: As $t$ grows to infinity, we see more and more abrupt steps passing from one integer to the next one, whereas $\beta$ remains almost constant when $\tau$ ranges over an interval $]k-1/2, k+1/2[$. This staircase phenomenon is proved in the following proposition.

\begin{prop}
\label{Prop:staircase}
 For all $0<\varepsilon<1/2$, we have
\begin{equation}
 \sup_{\tau\in \left]k-\frac{1}{2}+\varepsilon\, ,\, k+\frac{1}{2}-\varepsilon\right[}\ \Bigl| \beta(2\cos(\pi/\tau))- (k-1) \Bigr| \tend{k}{\infty} 0.
\end{equation}
\end{prop}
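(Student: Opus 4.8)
The plan is to exploit the monotonicity of $\tau\mapsto\beta$ to reduce the uniform estimate to the two endpoints of the interval, and then to pin down $\beta$ at each endpoint through a precise description of the first two symbols of $\omega_\lambda(\infty)$. Since $\tau\mapsto\lambda=2\cos(\pi/\tau)$ is increasing and $\lambda\mapsto\beta(\lambda)$ is increasing (Proposition~\ref{Prop:increasing}), so is $\tau\mapsto\beta$; moreover $\beta(2\cos(\pi/k))=k-1$. Writing $\tau_\pm:=k\pm(\tfrac12-\varepsilon)$, for $\tau\in\,]\tau_-,\tau_+[$ we have $\beta\geq k-1$ when $\tau\geq k$ and $\beta\leq k-1$ when $\tau\leq k$, so by continuity of $\beta$,
\begin{equation*}
\sup_{\tau\in\,]\tau_-,\tau_+[}\bigl|\beta(2\cos(\pi/\tau))-(k-1)\bigr|\leq\max\bigl(\beta(2\cos(\pi/\tau_+))-(k-1),\,(k-1)-\beta(2\cos(\pi/\tau_-))\bigr),
\end{equation*}
and it suffices to show that both terms on the right tend to $0$ as $k\to\infty$ for $\varepsilon$ fixed.

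Next I would set up the asymptotic tools. With $\theta=\pi/\tau$, the geometric interpretation of Section~\ref{Section:Geometrical Interpretation of P_i} gives $P_j(\lambda)=\sin(j\theta)/\sin\theta$, whence $m_j^\lambda=\sin(j\theta)/\sin((j+1)\theta)$ and $\ell_\lambda=\sin((i_\lambda+1)\theta)/\bigl(-\sin((i_\lambda+2)\theta)\bigr)$. By Theorem~\ref{Thm:correspondence}, $\O_\beta(1)=\omega_\lambda(\infty)=\infty_0\infty_1\cdots$, and since this sequence represents $1$ in base $\beta$ we have $\sum_{n\geq0}\infty_n\beta^{-(n+1)}=1$; setting $R:=\sum_{n\geq1}\infty_n\beta^{-(n+1)}$, this reads $\beta=\infty_0/(1-R)$. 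Because every digit is at most $i_\lambda\leq k-1$ while $\beta\geq\infty_0\geq k-2$, the contribution of the digits beyond the first is uniformly negligible, $\sum_{n\geq2}\infty_n\beta^{-(n+1)}\leq (k-1)/(\beta^2(\beta-1))=O(1/k^2)$, so that $R=\infty_1\beta^{-2}+O(1/k^2)$ and everything reduces to estimating the single digit $\infty_1$, which by Remark~\ref{Rmk:orbites} is the $T_\lambda$-address of $\ell_\lambda$.

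The two endpoints are then handled by computing $\ell_\lambda$ asymptotically from the $\sin$-formula. At $\tau_+$ one has $i_\lambda=k-1$, $\infty_0=k-1$, and $\ell_\lambda\to c_\varepsilon:=(\tfrac12-\varepsilon)/(\tfrac12+\varepsilon)<1$; since $m_j^\lambda>j/(j+1)$ (concavity of $\sin$), $\ell_\lambda$ stays below $m_{J_\varepsilon}^\lambda$ for some constant $J_\varepsilon=O_\varepsilon(1)$, so $\infty_1\leq J_\varepsilon$, hence $R=o(1/k)$ and $\beta=(k-1)/(1-o(1/k))=k-1+o(1)$. At $\tau_-$ one has $i_\lambda=k-2$, $\infty_0=k-2$, and now $\ell_\lambda\to(\tfrac12+\varepsilon)/(\tfrac12-\varepsilon)>1$: the point $\ell_\lambda$ is large and lands in one of the topmost intervals, giving $\infty_1=i_\lambda-O_\varepsilon(1)=k-2-O_\varepsilon(1)$; using $\beta\leq k$ we get $R\geq\infty_1\beta^{-2}\geq 1/k-o(1/k)$, whence $\beta=(k-2)/(1-R)\geq k-1-o(1)$. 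Both terms in the reduction therefore vanish in the limit.

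The delicate point is the sensitivity of $\beta=\infty_0/(1-R)$: as $\infty_0\approx k$, a change of order $1/k$ in $R$ moves $\beta$ by order $1$, so $R$ must be controlled to precision $o(1/k)$. What saves the argument is that only $\infty_1$ matters at this precision and that the fixed margin $\varepsilon$ keeps $\ell_\lambda$ bounded away from $1$ uniformly in $k$ — from below at $\tau_+$ and from above at $\tau_-$ — forcing $\infty_1$ to be either $O_\varepsilon(1)$ or within $O_\varepsilon(1)$ of $i_\lambda$. The main work, and the place where care is genuinely needed, is to make these $O_\varepsilon(1)$ bounds on $\infty_1$ uniform in $k$, i.e. to locate $\ell_\lambda$ precisely among the endpoints $m_j^\lambda$ near the top of the partition; the asymmetry between the two sides (small versus large $\ell_\lambda$, reflecting the jump of $i_\lambda$ across $\tau=k$) has to be treated separately on each side.
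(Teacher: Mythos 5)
Your proposal is correct and rests on the same skeleton as the paper's proof: reduce $\beta$ to the first two digits $\infty_0\infty_1$ of $\omega_\lambda(\infty)$, read $\infty_0=i_\lambda$ off Proposition~\ref{Prop:i_lambda}, and estimate $\infty_1$ trigonometrically as the address of $\ell_\lambda$. But the execution differs in three ways worth recording. First, your monotonicity reduction to the two endpoints $\tau_\pm=k\pm(\tfrac12-\varepsilon)$ is not in the paper, which instead runs the whole argument for an arbitrary $r\in\,]0,\tfrac12-\varepsilon[$ and checks uniformity in $r$ by hand; your reduction disposes of the uniformity issue at the outset and is a genuine simplification. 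Second, where you use the identity $1=\sum_n\infty_n\beta^{-(n+1)}$, i.e.\ $\beta=\infty_0/(1-R)$, the paper isolates the same quantitative content as Lemma~\ref{Lemma:asymptoticOfBeta} ($|\beta-(k+j/k)|\le 1/k$ when $\O_\beta(1)$ starts with $k\,j$), derived from the functional equation for $S_\beta$; the two are equivalent, and your tail bound $O(1/k^2)$ matches the paper's error term. Third, you locate $\ell_\lambda$ explicitly among the endpoints $m_j^\lambda=\sin(j\theta)/\sin((j+1)\theta)$ via the closed formula $P_j(\lambda)=\sin(j\theta)/\sin\theta$, whereas the paper tracks the argument $\varphi$ of the point $N_1$ on the second circle and only concludes $\infty_1/k\to0$ (resp.\ $\infty_1/k\to1$); your route gives the stronger bounds $\infty_1=O_\varepsilon(1)$ and $\infty_1=i_\lambda-O_\varepsilon(1)$, though the weaker ones already suffice once fed into the first-two-digit estimate. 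The one step you flag but do not carry out — placing $\ell_\lambda$ above $m^\lambda_{i_\lambda-J}$ at $\tau_-$ — does close with your formulas: $m^\lambda_{i_\lambda-J}\to(J+\tfrac32+\varepsilon)/(J+\tfrac12+\varepsilon)$ while $\ell_\lambda\to(\tfrac12+\varepsilon)/(\tfrac12-\varepsilon)$, and the former drops below the latter as soon as $J>(\tfrac12-2\varepsilon-2\varepsilon^2)/(2\varepsilon)$, so $J_\varepsilon$ is indeed a constant depending only on $\varepsilon$; the companion bound $m_j^\lambda\ge j/(j+1)$ at $\tau_+$ is justified by concavity of $\sin$ on $[0,\pi]$ exactly as you say.
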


\begin{lemma}
 \label{Lemma:asymptoticOfBeta}
Let $\beta>1$ be such that $\O_\beta(1)$ starts with $k\ j$, for some integer $k\ge 2$ and some $0\le j\le k$. Then
$$ \left|\beta-\left( k+\dfrac{j}{k}\right)\right| \le \dfrac{1}{k}. $$
\end{lemma}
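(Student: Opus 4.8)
The plan is to turn the combinatorial hypothesis on the first two digits of $\O_\beta(1)$ into a quadratic double inequality for $\beta$, and then to locate $\beta$ by estimating the two relevant roots.

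First I would read off what the prefix $k\,j$ of $\O_\beta(1)$ says about $\beta$. Set $u\egdef\lim_{t\uparrow1}\uparrow S_\beta(t)$ and $v\egdef\lim_{t\uparrow1}\uparrow S_\beta^2(t)$; both limits exist and lie in $[0,1]$, being increasing limits of elements of $[0,1[$. Since the first digit $\lfloor\beta t\rfloor$ of $\O_\beta(t)$ equals $k$ for $t$ close to $1$ (as $\O_\beta(t)\uparrow\O_\beta(1)$ and the first digit of $\O_\beta(1)$ is $k$), we have $S_\beta(t)=\beta t-k$ there, whence $u=\beta-k$; in particular $k\le\beta\le k+1$. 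Likewise the second digit of $\O_\beta(t)$ equals $j$ for $t$ close to $1$, so $S_\beta^2(t)=\beta S_\beta(t)-j$ and therefore $v=\beta u-j=\beta(\beta-k)-j$. As $v\in[0,1]$, this yields the key estimate
\[
 j\ \le\ \beta^2-k\beta\ \le\ j+1 .
\]

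Next I would solve this inequality. For an integer $m\ge0$ write $\rho(m)\egdef\tfrac12\bigl(k+\sqrt{k^2+4m}\bigr)$ for the positive root of $X^2-kX-m$. Because $X\mapsto X^2-kX$ is increasing on $[k/2,\infty[$ and $\beta\ge k$, the double inequality is exactly $\rho(j)\le\beta\le\rho(j+1)$. It then remains to compare $\rho(j)$ and $\rho(j+1)$ with $k+j/k$, for which a direct computation gives, for every $m\ge0$,
\[
 \Bigl(k+\tfrac{m}{k}\Bigr)-\rho(m)=\frac{4m^2}{k\bigl(\sqrt{k^2+4m}+k\bigr)^2}\in\Bigl[0,\tfrac{m^2}{k^3}\Bigr],
\]
so that $k+\tfrac{m}{k}-\tfrac{m^2}{k^3}\le\rho(m)\le k+\tfrac{m}{k}$. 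Applying the upper bound with $m=j+1$ gives $\beta\le\rho(j+1)\le k+\tfrac{j}{k}+\tfrac1k$; applying the lower bound with $m=j$ and using $j\le k$ (so $j^2/k^3\le1/k$) gives $\beta\ge\rho(j)\ge k+\tfrac{j}{k}-\tfrac1k$. Together these yield $\bigl|\beta-(k+j/k)\bigr|\le1/k$, as claimed.

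The routine part is the algebra of the last two paragraphs; the only point requiring care is the passage in the first step from digits to the inequality $j\le\beta^2-k\beta\le j+1$. The mild subtlety there is that $\beta(\beta-k)$ may be an integer, so that $S_\beta$ is discontinuous at $u=\beta-k$ and one cannot simply evaluate $S_\beta^2$ at $1$; working throughout with the increasing limits $u,v$ (which necessarily land in the closed interval $[0,1]$) sidesteps this, and it is precisely the case $v=1$ that produces the equality $\beta^2-k\beta=j+1$ and the extremal values of the estimate.
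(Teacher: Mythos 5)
Your proof is correct and rests on the same key inequality as the paper's, namely $j \le \beta(\beta-k) \le j+1$, which the paper obtains via the case analysis of Lemma~\ref{Lemma:orbite1} (writing $\beta-k=j/\beta+\frac{1}{\beta}S_\beta(\beta-[\beta])$) and you obtain more uniformly by taking increasing limits of $S_\beta(t)$ and $S_\beta^2(t)$ as $t\uparrow 1$. The only other difference is cosmetic: the paper concludes by bounding $j/\beta$ and $(j+1)/\beta$ directly using $k\le\beta\le k+1$, whereas you solve the quadratic inequality and estimate its roots against $k+j/k$ --- both steps are routine and both are valid.
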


\begin{proof}
We use Lemma~\ref{Lemma:orbite1}. If $\O_\beta(1) = kkk\dots$, then $\beta=k+1=k+j/k$. In all the other cases, observe that $[\beta]=k$. 
If $\O_\beta(1)$ is a periodic repetition of the pattern $kj$ with $j<k$, then 
$$ \O_\beta(\beta-[\beta]) = (j+1) 0 0 0 \ldots $$
Hence we have $\beta-k=(j+1)/\beta$, which gives the result since $\beta-k\le 1$. 
Otherwise, $\O_\beta(\beta-[\beta])$ starts with $j$, which yields 
$$ \beta-k = \dfrac{j}{\beta} + \dfrac{1}{\beta}S_\beta(\beta-[\beta]). $$
Since $0\le S_\beta(\beta-[\beta])<1$, we get the announced result.
\end{proof}

\begin{proof}[Proof of Proposition~\ref{Prop:staircase}]
 The main tool here is the geometrical interpretation of the transformation $T_\lambda$ developped in Sections~\ref{Section:Geometrical Interpretation of P_i} and~\ref{Section:Geometrical Interpretation}.

Let us fix $0<\varepsilon<1/2$, $0<r<1/2-\varepsilon$.

Let $\tau\egdef k+r$ for some large integer $k$. Let $\theta\egdef\pi/\tau$. We want to find the beginning of $\omega_\lambda(\infty)=\infty_0 \infty_1\ldots$ for $\lambda\egdef 2\cos\theta$.
By Proposition~\ref{Prop:i_lambda}, we already know that $\infty_0=i_\lambda=k-1$. We introduce the points $M_j$, $0\le j\le k+1$ as defined on Figure~\ref{Fig:cercle}: These points lie on a circle centered at the origin, $M_0$ and $M_1$ have respective abscissae $0$ and $1$, and $M_{j+1}$ is the image of $M_j$ by the rotation of angle $\theta$. Denote by $t_j$ the abscissa of $M_j$: We have $t_k=R\sin k\theta>0$ and $t_{k+1}=R\sin (k+1)\theta<0$, where $R$ is the radius of the circle. 
Note that 
\begin{equation}
\label{eq:abscisses}
 \dfrac{-t_{k+1}-t_k}{t_k}\mathop\sim_{k\to\infty} \dfrac{1-2r}{r}
\end{equation}
To estimate geometrically $\infty_1$, we have to introduce a new circle centered at the origin, and two points $N_0$ and $N_1$ lying on this circle so that their abscissae are respectively $-t_{k+1}$ and $t_k$, and the angle $(ON_0,ON_1)$ equals~$\theta$. We again define the sequence $(N_j)$ of points on this circle by successive rotations of angle $\theta$. Then $\infty_1$ is the smallest $j$ such that $N_{j+2}$ has negative abscissa. Let $\varphi$ be the argument of $N_1$, so that the argument of $N_0$ is $\varphi-\theta$. We have
\begin{equation*}
 \dfrac{-t_{k+1}-t_k}{t_k} = \cos\theta - 1 + \sin\theta\tan\varphi.
\end{equation*}
Hence, by~\eqref{eq:abscisses}, observing that $\theta\tend{k}{\infty}0$, we get
$$ \lim_{k\to\infty} \sin\theta\tan\varphi = \dfrac{1-2r}{r} >4\varepsilon.$$
When $k\to\infty$, $\varphi\to\pi/2$ uniformly with respect to $r$. This implies that $\infty_1/k\to 0$ uniformly with respect to $r$. By Lemma~\ref{Lemma:asymptoticOfBeta}, we conclude that 
$$ \left|\beta\left(2\cos\left(\frac{\pi}{\tau}\right)\right) - (k-1)\right| \tend{k}{\infty} 0, $$
uniformly with respect to $r\in]0,1/2-\varepsilon[$.

It remains to study the case where $\tau\egdef k-r$. We now have $k-1<\tau<k$, and  Proposition~\ref{Prop:i_lambda} gives here $\infty_0=k-2$. The new points $N_0$ and $N_1$ have respective abscissae $-t_k=-R\sin k\theta$ and $t_{k-1}=R\sin(k-1)\theta$.
The analog of the estimation~\eqref{eq:abscisses} is
$$ \dfrac{t_{k-1}+t_k}{t_{k-1}} \mathop\sim_{k\to\infty} \dfrac{1-2r}{1-r}, $$
and the lefthand side is equal to $1 - \cos\theta - \sin\theta\tan\varphi$ (where $\varphi$ is the argument of $N_1$). We get now
$\varphi\tend{k}{\infty} -\pi/2$  uniformly with respect to $r$. This implies that $\infty_1/k\to 1$ uniformly with respect to $r$. We conclude by using Lemma~\ref{Lemma:asymptoticOfBeta}.
\end{proof}

\begin{corollary}
\label{Cor:non-analytic}
 The map $\lambda\mapsto\beta(\lambda)$ is not analytic.
\end{corollary}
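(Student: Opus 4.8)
The plan is to argue by contradiction, assuming that $\lambda\mapsto\beta(\lambda)$ is real-analytic on $]0,2[$, and to clash with the staircase behaviour of Proposition~\ref{Prop:staircase}. The first step is a change of variables: since $\tau\mapsto 2\cos(\pi/\tau)$ is a real-analytic diffeomorphism from $]2,\infty[$ onto $]0,2[$, the composed map $F(\tau)\egdef\beta(2\cos(\pi/\tau))$ would then be real-analytic on $]2,\infty[$. Moreover $F$ is strictly increasing (Proposition~\ref{Prop:increasing} together with the injectivity of $\lambda\mapsto\omega_\lambda(\infty)$), and it passes through prescribed integer values: from $\beta(2\cos(\pi/k))=k-1$ we get $F(k)=k-1$ for every integer $k\ge3$. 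It therefore suffices to show that no real-analytic strictly increasing $F$ on $]2,\infty[$ with $F(k)=k-1$ can obey the flatness estimate of Proposition~\ref{Prop:staircase}.

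Next I would extract the quantitative obstruction from the staircase. Fix $0<\varepsilon<1/2$ and set, for each large integer $k$, the plateau $P_k\egdef\left]k-\tfrac12+\varepsilon,\,k+\tfrac12-\varepsilon\right[$; Proposition~\ref{Prop:staircase} gives $\eta_k\egdef\sup_{\tau\in P_k}\bigl|F(\tau)-(k-1)\bigr|\tend{k}{\infty}0$. Since $F$ is increasing, $F'\ge0$ and the total rise of $F$ over $P_k$ is at most $2\eta_k$, so $\int_{P_k}F'\le 2\eta_k$; as $|P_k|=1-2\varepsilon$ is fixed, there is a point $\tau_k\in P_k$ with $F'(\tau_k)\le 2\eta_k/(1-2\varepsilon)\tend{k}{\infty}0$. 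On the other hand $\int_k^{k+1}F'=F(k+1)-F(k)=1$, and since the contributions of the two plateaus $P_k,P_{k+1}$ are $o(1)$, essentially the whole unit rise is concentrated on the two transition windows of length $2\varepsilon$ around the half-integers; hence $F'$ also takes values as large as $(1-o(1))/(2\varepsilon)$ there, unbounded as $\varepsilon\downarrow0$.

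Finally I would turn this into a failure of analyticity, and this last step is the real difficulty. The subtlety is that real-analyticity on the \emph{open} ray $]2,\infty[$ does not by itself forbid $F'$ from oscillating between values near $0$ and very large values as $\tau\to\infty$ (the domain of holomorphy is allowed to pinch toward the real axis as $\tau\to\infty$, i.e. as $\lambda\to2$). The staircase must therefore be coupled with the exact equalities $F(k)=k-1$ to localise a genuine obstruction. The cleanest route I see is to consider the analytic function $H(\tau)\egdef F(\tau+1)-F(\tau)-1$, which vanishes at every integer $k\ge3$ and, by the flatness estimate, satisfies $H(\tau)\to0$ uniformly along the plateaus; combined with the derivative dichotomy above (namely $F'$ arbitrarily small on unit-length plateaus yet carrying a full unit of integral per period), this is incompatible with $F$, hence with $\beta$, admitting a convergent power series, the contradiction being pinned at the interior points $\lambda_k=2\cos(\pi/k)\to2$ rather than read off from the shape of the graph alone. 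This forces $\lambda\mapsto\beta(\lambda)$ to be non-analytic.
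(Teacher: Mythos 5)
Your overall strategy is the same as the paper's: combine the exact values $\beta(2\cos(\pi/k))=k-1$ with the staircase estimate of Proposition~\ref{Prop:staircase}. The paper's own proof is two lines long: if $\beta$ were analytic, then $F(\tau)\egdef\beta(2\cos(\pi/\tau))$ would coincide with $\tau-1$ on all of $]2,\infty[$, and that identity is flatly contradicted by Proposition~\ref{Prop:staircase}. You have correctly spotted that this deduction is not a bare application of the identity theorem --- the interpolation points $k$ (resp.\ $\lambda_k=2\cos(\pi/k)$) accumulate only at $\tau=\infty$ (resp.\ at the boundary point $\lambda=2$), not at an interior point of the domain --- and that is a legitimate concern.

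However, your repair does not close this gap; it relocates it. The final paragraph introduces $H(\tau)\egdef F(\tau+1)-F(\tau)-1$ and then simply asserts that the derivative dichotomy ``is incompatible with $F$ admitting a convergent power series,'' with no argument. That assertion is precisely what needs to be proved, and it cannot be derived from the properties you have isolated: being real-analytic on the open ray $]2,\infty[$, strictly increasing, equal to $k-1$ at every integer $k\ge3$, and asymptotically flat on the plateaus are mutually consistent properties. Indeed, Carleman-type approximation produces entire, strictly increasing functions interpolating $k-1$ at the integers whose restrictions to the plateaus are as flat as one likes, with flatness improving as $k\to\infty$; the oscillation of $F'$ between $o(1)$ on the plateaus and values of order $1/(2\varepsilon)$ on the transition windows costs nothing for a function that is only required to be analytic on an open unbounded interval. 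Consequently no contradiction follows from your list of facts, and the claim that the contradiction is ``pinned at the interior points $\lambda_k$'' is unsubstantiated: the staircase estimates are asymptotic in $k$ and say nothing about the behaviour of $\beta$ near any fixed $\lambda_k$. To match the paper you should simply run its argument --- deduce the global identity $F(\tau)=\tau-1$ from analyticity and contradict it with Proposition~\ref{Prop:staircase}; if, as you suspect, that first deduction needs more justification than agreement at the points $\lambda_k$, your proposal does not supply it either, and a complete proof would have to produce a genuinely local obstruction (for instance a failure of differentiability at some specific interior point).
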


\begin{proof}
 We know that $\beta(2\cos(\pi/k))=k-1$ for all integer $k\ge3$. If the map $\lambda\mapsto\beta(\lambda)$ were analytic, we would then have $\beta(2\cos(\pi/\tau))=\tau-1$ for all real $\tau>2$. This is clearly not the case by Proposition~\ref{Prop:staircase}.
\end{proof}

\section{Open questions}

\subsection{Invariant measures}
It can be shown that in the case $\lambda=\lambda_k$, the transformation $T_\lambda$ admits a unique absolutely continuous invariant measure $\mu$, whose density with respect to the Lebesgue measure is $\frac{d\mu}{dx}(x)=1/x$. Can we describe absolutely continuous $T_\lambda$-invariant measures for other $\lambda$'s?

\subsection{Algebraic properties of $\lambda$}
Many works have been devoted to the connections between algebraic properties of $\beta$ and the dynamical properties of the associated $\beta$-shift (see e.g. \cite{blanchard1989}). Are these properties also connected to the algebraic properties of the corresponding $\lambda$? We can stress the fact that the particular values of $\lambda$ studied in Section~\ref{Sec:map} are algebraic, and are always associated to an algebraic $\beta$. Does the correspondence between $\lambda$ and $\beta$ always associate algebraic numbers to algebraic numbers?

\subsection{Lazy and random expansions in $\lambda$-continued fractions}
For $\beta>1$, there are generally many ways to expand a real number $t\in[0,1[$ in the form 
$$ t = \sum_{n\ge0}\frac{t_n}{\beta^{n+1}}, $$
where the $t_n$'s are taken from $\{0,1,\ldots,\lfloor \beta \rfloor\}$. The expansion given by iteration of the map $S_\beta$ defined in Section~\ref{Sec:beta-shifts} is sometimes refered to as the \emph{greedy expansion}, since at each step it uses the largest possible digit.
It therefore gives the maximal expansion of $t$ with respect to the lexicographic order. Symmetrically, one can consider the minimal $\beta$-expansion of $t$, given by iteration of the so-called \emph{lazy map} which at each step outputs the smallest possible digit (see~\cite{dajani2005} and references therein). Between the lazy and the greedy expansions, one can find a continuum of possible $\beta$-expansions among which the \emph{random $\beta$-expansions} studied in~\cite{dajani2005,dajani2007}. Can we develop some expansions in $\lambda$-continued fractions corresponding to these notions of lazy and random $\beta$-expansions?

\subsection{Regularity of $\lambda\mapsto\beta(\lambda)$}
We proved in Section~\ref{Sec:map} that the map  $\lambda\mapsto\beta(\lambda)$ is increasing and continuous but non-analytic. 
A careful look at the details in Figure~\ref{Fig:beta(t)} suggests some kind of self-similarity in the graph. 
We thus may expect the map to be not even differentiable.

\bibliography{lambda_beta.bib}

\end{document}